\documentclass[11pt]{article}

\usepackage{latexsym}
\usepackage{amssymb}
\usepackage{amsthm}
\usepackage{amscd}
\usepackage{amsmath}
\usepackage{mathrsfs}
\usepackage[all]{xy}
\usepackage{hyperref} 
\usepackage[usenames,dvipsnames]{color}
\usepackage{graphicx,eepic}
\usepackage{float}

\usepackage[small,nohug,heads=vee]{diagrams} 
\diagramstyle[labelstyle=\scriptstyle]

\usepackage{color}

\theoremstyle{definition}
\newtheorem* {theorem*}{Theorem}
\newtheorem{theorem}{Theorem}[section]

\newtheorem* {problem*}{Problem}
\theoremstyle{definition}

\newtheorem* {example*}{Example}
\newtheorem{fact}[theorem]{Fact}
\newtheorem{lemma}[theorem]{Lemma}
\theoremstyle{definition}
\newtheorem{definition}[theorem]{Definition}
\theoremstyle{definition}
\newtheorem* {notation}{Notation}

\newtheorem{proposition}[theorem]{Proposition}
\newtheorem{corollary}[theorem]{Corollary}

\newtheorem* {remark}{Remark}
\theoremstyle{definition}
\newtheorem {example}[theorem]{Example}
\theoremstyle{definition}

\theoremstyle{definition}

\theoremstyle{definition}

\xyoption{dvips}

\usepackage{fullpage}

\numberwithin{equation}{section}

\def\qquand{\qquad\text{and}\qquad}
\def\quand{\quad\text{and}\quad}

\def\({\left(}
\def\){\right)}
   \newcommand{\FF}{\mathbb{F}}       \newcommand{\cP}{\mathcal{P}} 
 \newcommand{\cK}{\mathcal{K}} \newcommand{\cO}{\mathcal{O}} 
  \newcommand{\cS}{\mathcal{S}} 
 
\newcommand{\cC}{\mathcal{C}}

\def\NN{\mathbb{N}}
    \def\ZZ{\mathbb{Z}}   \def\H{\mathcal{H}} \def\Ind{\mathrm{Ind}}    \def\Res{\mathrm{Res}}       
  \def\wt{\widetilde}
 \def\L{\mathcal{L}} \def\sh{\mathrm{sh}}

   \newcommand{\Tr}{\mathrm{Tr}}
\newcommand{\h}{\mathfrak{h}}
\newcommand{\fn}

\newcommand{\One}{{1\hspace{-.13cm} 1}}

\newcommand{\sgn}{\mathrm{sgn}}

\def\fk{\mathfrak}

\def\barr{\begin{array}}
\def\earr{\end{array}}
\def\ba{\begin{aligned}}
\def\ea{\end{aligned}}
\def\be{\begin{equation}}
\def\ee{\end{equation}}
\def\cS{\mathcal{S}}

\def\ll{\langle\langle}
\def\rr{\rangle\rangle}

\def\hs{\hspace{0.5mm}}

\def\ben{\begin{enumerate}}
\def\een{\end{enumerate}}

\def\omdef{\overset{\mathrm{def}}}

\def\c{\mathbf{c}}

\newcommand{\tthree}[6]{\xy<0.0cm,0.25cm> \xymatrix@R=0.5cm@C=0.2cm{ *{\bullet} #6 &*{\bullet} #5 &*{\bullet} #4 \\  *{\bullet}  #1 &*{\bullet}  #2 &*{\bullet}  #3  }\endxy}

\newcommand{\tfour}[8]{\xy<0.0cm,0.25cm> \xymatrix@R=0.2cm@C=0.2cm{ *{\bullet} #8 & *{\bullet} #7 &*{\bullet} #6 &*{\bullet} #5 \\  *{\bullet}  #1 &*{\bullet}  #2 &*{\bullet}  #3  &*{\bullet}  #4  }\endxy}

\newcommand{\exone}{\xy<0cm,0cm> \xymatrix@R=0.0cm@C=0.3cm{*{\bullet}1 &*{\bullet}  
}\endxy}

\newcommand{\extwo}[4]{\xy<0cm,0cm> \xymatrix@R=-0.0cm@C=0.3cm{*{\bullet} #1 &*{\bullet} #2 &*{\bullet} #3 &*{\bullet}  #4  
}\endxy}

\newcommand{\exthree}[6]{\xy<0cm,0cm> \xymatrix@R=-0.0cm@C=0.3cm{*{\bullet} #1 &*{\bullet} #2 &*{\bullet} #3 &*{\bullet}  #4 &*{\bullet}  #5 &*{\bullet}  #6
 }\endxy}

\newcommand{\exfour}[8]{\xy<0.0cm,0.0cm> \xymatrix@R=-0.0cm@C=0.3cm{*{\bullet} #1 &*{\bullet} #2 &*{\bullet} #3 &*{\bullet}  #4 &*{\bullet}  #5 &*{\bullet}  #6 &*{\bullet}  #7 &*{\bullet}  #8
}\endxy}

\newcommand{\xone}[1]{\xy<0cm,0cm> \xymatrix@R=-0.0cm@C=0.3cm{*{\bullet} #1 
  }\endxy}

\newcommand{\xtwo}[2]{\xy<0cm,0cm> \xymatrix@R=-0.0cm@C=0.3cm{*{\bullet} #1 &*{\bullet} #2  
  }\endxy}

\newcommand{\xthree}[3]{\xy<0cm,0cm> \xymatrix@R=-0.0cm@C=0.3cm{*{\bullet} #1 &*{\bullet} #2 &*{\bullet} #3 
}\endxy}

\newcommand{\xfour}[4]{\xy<0.0cm,0.0cm> \xymatrix@R=-0.0cm@C=.3cm{*{\bullet} #1 &*{\bullet} #2 &*{\bullet} #3 &*{\bullet}  #4 
}\endxy}

\newcommand{\xfive}[5]{\xy<0.0cm,0.0cm> \xymatrix@R=-0.0cm@C=0.3cm{*{\bullet} #1 &*{\bullet} #2 &*{\bullet} #3 &*{\bullet}  #4  &*{\bullet}  #5 
}\endxy}

\newcommand{\xsix}[6]{\xy<0.0cm,0.0cm> \xymatrix@R=-0.0cm@C=0.3cm{*{\bullet} #1 &*{\bullet} #2 &*{\bullet} #3 &*{\bullet}  #4  &*{\bullet}  #5 &*{\bullet}  #6 
}\endxy}

\newcommand{\xseven}[7]{\xy<0.0cm,0.0cm> \xymatrix@R=-0.0cm@C=0.3cm{*{\bullet} #1 &*{\bullet} #2 &*{\bullet} #3 &*{\bullet}  #4  &*{\bullet}  #5 &*{\bullet}  #6 &*{\bullet}  #7 
}\endxy}

\def\kk{\mathbb{K}}

\def\st{\mathrm{st}}

\def\id{\mathrm{id}}

\def\im{\mathrm{Image}}
\newarrow {Corresponds} <--->

\def\Set{\textbf{Set}}

\def\Sp{\textbf{Sp}}

\def\Fun{\mathrm{Fun}}
\def\Cl{\mathrm{ClassFun}}

\def\GrVec{\textbf{GrVec}}
\def\canon{\mathrm{canon}}

\def\Hopf{\textbf{Hopf}}

\def\ds{\displaystyle}

\def\Vec{\textbf{Vec}}
\def\h{\mathbf{h}}

\def\p{\mathbf{p}}
\def\q{\mathbf{q}}

\def\E{\mathrm{E}}
\def\L{\mathrm{L}}

\def\P{\mathrm{P}}
\def\Q{\mathrm{Q}}

\def\one{\mathbf{1}}

\def\csum{\hs\#\hs}
\def\bfSigma{\mbox{\boldmath$\fk S$}}
\def\bfE{\mbox{\textbf{E}}}

\def\bfL{\mbox{\textbf{L}}}

\def\bfPi{\mbox{\boldmath$\Pi$}}

\def\Sym{\mathbf{Sym}}
\def\NCSym{\mathbf{NCSym}}

\def\S{{\tt S}}
\def\Mobius{\text{M\"obius}}

\def\FB{\mathbf{FB}}

\def\SL{\mathrm{SL}}
\def\USL{\mathrm{USL}}
\def\UO{\mathrm{UO}}
\def\USp{\mathrm{USp}}

\def\SC{{\cS\cC}}

\makeatletter
\renewcommand{\@makefnmark}{\mbox{\textsuperscript{}}}
\makeatother

\allowdisplaybreaks[1]

\UseCrayolaColors

\begin{document}
\title{Strong forms of linearization for Hopf monoids in species
}
\author{Eric Marberg\footnote{This research was conducted with support from the National Science Foundation.}
\\
Department of Mathematics \\
Stanford University \\
{\tt emarberg@stanford.edu}}
\date{}

\maketitle

\begin{abstract}
A vector species is a functor  from the category of finite sets with bijections to vector spaces; informally, one can view this as a sequence of $S_n$-modules. A Hopf monoid (in the category of vector species) consists of a vector species with  unit, counit, product, and coproduct morphisms satisfying several compatibility conditions, analogous to a  graded  Hopf algebra. We say that a Hopf monoid is strongly linearized if it has a ``basis'' preserved by its product and coproduct in a certain sense. We prove several equivalent characterizations of this property, and show that  any strongly linearized Hopf monoid which is commutative and cocommutative possesses four  bases which one can view as analogues of the classical bases of the algebra of symmetric functions. There are natural functors which turn Hopf monoids into graded Hopf algebras,  and applying these functors to strongly linearized Hopf monoids produces several notable families of Hopf algebras. For example, in this way we give a simple unified construction of the   Hopf algebras of superclass functions attached to the maximal unipotent subgroups of three families of classical Chevalley groups.
\end{abstract}

\tableofcontents

\section{Introduction}

Everywhere in this work $\kk$ denotes a field with characteristic zero.
A vector species is a functor  from the category of finite sets with bijections to $\kk$-vector spaces; one can view this as a sequence of $S_n$-modules, one for each natural number $n$. 
A Hopf monoid (in the category of vector species) consists of a vector species with  unit, counit, product, and coproduct morphisms satisfying several compatibility conditions, analogous to a  graded  Hopf algebra.
Background on vector species and Hopf monoids in species
is given in Section \ref{prelim-sect1}.

This work is a sequel to \cite{Me}, which develops the structure theory of Hopf monoids that are self-dual in a certain strong sense. We review these results in Section \ref{prelim-sect2}. Our new results begin in Section \ref{structu-sect} and concern vector species which are \emph{linearized} in the sense that they correspond to  sequences of $S_n$-modules which are permutation representations. For such species there is a notion of \emph{basis}, which one can view as a sequence of $S_n$-sets subject to some conditions; see Section \ref{lin-sect}. From any comonoid in species which is connected, cocommutative, and linearized, there is standard way of constructing a connected Hopf monoid which is  self-dual and linearized. We call the Hopf monoids arising from this construction \emph{strongly linearized}. 
 We show that such Hopf monoids are characterized by a more elementary condition (see Theorem \ref{fl-thm}) and that they are distinguished from Hopf monoids which are merely linearized according to properties of an associated partial order (see Theorem \ref{posetisom-thm}).
 Finally, we show that a Hopf monoid which is commutative, cocommutative, and strongly linearized has four natural bases, which one can view as analogues of the four classical bases of the Hopf algebra of symmetric functions; see Theorem \ref{basis-thm}.

There are several natural functors which one can use to turn Hopf monoids in species into graded Hopf algebras. Applied to strongly linearized Hopf monoids, in particular, these functors 
give rise to  families of Hopf algebras with notable additional structure.
In Section \ref{app-sect} we discuss several examples in which the results in Section \ref{structu-sect} unify constructions at the level of Hopf algebras via this principle.
These applications are broadly divided into three topics, which we introduce briefly as follows.

\subsection{Classical bases of symmetric functions}

 Let $\kk\langle\langle x_1,x_2,\dots\rangle\rangle = \kk\ll x \rr$ be the algebra of formal powers series over $\kk$ in a countably infinite set of noncommuting variables $x_i$. For each positive integer $n$, the symmetric group $S_n$ acts on $\kk\ll x\rr$ by permuting the indices of the variables $x_1,\dots,x_n$, and we write \[\NCSym = \NCSym(x)\] for the subspace of functions of bounded degree in $ \kk\ll x \rr$ which are invariant under all of these actions.
 Wolf's paper \cite{Wolf} is perhaps the first place in the literature to consider this algebra; Rosas and Sagan began its systematic study in \cite{Sagan1}, which has continued in the last decade, e.g., in \cite{AZ,ABT,BergeronZabrocki,preprint,Sagan2}.
There is a natural Hopf algebra structure on this graded algebra (which we review in Section \ref{sym-sect}) and, 
following \cite{BergeronZabrocki,preprint,HNT,Sagan1}, we call $\NCSym$ the \emph{Hopf algebra of symmetric functions in noncommuting variables.}

   $\NCSym$ is one of   several noncommutative analogues of the more familiar and much-studied \emph{Hopf algebra of symmetric functions}, which we denote $\Sym$. Standard references for this object include \cite{Macdonald,Sagan,Stan2} and also the useful notes \cite{ReinerNotes}.
%
%
  The algebra of symmetric functions has four classical bases indexed by integer partitions, each with a simple explicit definition.
   Sagan and Rosas \cite{Sagan1}  show that $\NCSym$ likewise has four simply defined bases, now indexed by set partitions, which are mapped onto (scalar multiples of) the classical bases of $\Sym$ by the  natural homomorphism
 of graded connected Hopf algebras 
 \be\label{ncsym2sym} \NCSym \to \Sym\ee
 which lets the variables $\{x_i\}$ in $\kk\ll x \rr$ commute.
 \emph{A priori}, it is noteworthy that the bases of $\Sym$ should each lift to bases $\NCSym$ in this convenient fashion$-$in particular, in such a way that the Hopf algebraic structures are preserved.
 As an explanation of this phenomenon, we show  
in Section \ref{sym-sect} that these definitions may be unified by a more general construction via our results on strongly linearized Hopf monoids.
   
\subsection{Hopf algebras of superclass functions}\label{sc-intro}

Let $\FF$ be a finite field and write $ \USL_n(\FF)$ for the group of    upper triangular $n\times n$-matrices  
over $\FF$ with all diagonal entries equal to one.
Among other characterizations, this is  the maximal unipotent subgroup of $\SL_n(\FF)$, where we say that a matrix group is \emph{unipotent} if each of its elements is equal to the identity plus some nilpotent matrix.
 The product group $\USL_n(\FF)\times \USL_n(\FF)$ acts on $\USL_n(\FF)$ by  $(g,h) : x \mapsto 1 + g(x-1)h^{-1}$,
 and the \emph{superclasses} of 
 $\USL_n(\FF)$ are the orbits of this action.
We let $\SC(\USL_n,\FF)$ denote the $\kk$-vector space of \emph{superclass functions}, that is, maps $\USL_n(\FF) \to \kk$ which are constant on superclasses.
Such functions are of interest as they have a distinguished basis of characters with many notable properties, and which are the prototypical example of a \emph{supercharacter theory} as defined in \cite{DI}.
Moreover, there is a natural Hopf algebra structure \cite{AZ}
on the graded vector space
 \[\SC(\USL_\bullet,\FF) = \bigoplus_{n\geq 0} \SC(\USL_n,\FF).\]
When $\FF$ is a field with exactly two elements, this Hopf algebra is isomorphic to $\NCSym$.

There are analogous definitions of superclass functions 
on the maximal unipotent subgroups of other finite Chevalley groups.
Given a square matrix $X$, we write $X^\dag$ for the backwards transposed matrix, formed by flipping $X$ about its anti-diagonal.
Assume $\FF$ has an odd number of elements and define
\[ \UO_{2n}(\FF) = \{ g \in \USL_{2n}(\FF) : g^{-1}=g^\dag\}
\quand
\USp_{2n}(\FF) = \{ g \in \USL_{2n}(\FF) : g^{-1} = -\Omega g^\dag \Omega \}\]
where $\Omega$ denotes the $2n\times 2n$ matrix given by 
\[ \Omega = \(\barr{cc} I_n & 0 \\ 0& -I_n \earr\).\]
These are the maximal unipotent subgroups of the  even orthogonal and symplectic groups over $\FF$.
The \emph{superclasses} of these groups are the nonempty sets of the form $\cK \cap \UO_{2n}(\FF)$ and $\cK \cap \USp_{2n}(\FF)$ where $\cK$ is a superclass of $\USL_{2n}(\FF)$.
Andr\'e and Neto \cite{bcd1,bcd2,bcd3} first showed that the spaces of superclass functions on these groups likewise have interesting bases of characters; Andrews's recent preprint \cite{Andrews} gives a uniform construction of these supercharacter theories.

Let $\SC(\UO_{2n},\FF)$ and $ \SC(\USp_{2n},\FF)$ denote  the $\kk$-vector spaces of maps $\UO_{2n}(\FF) \to \kk$ and $\USp_{2n}(\FF) \to \kk$ which are constant on superclasses,
and define the graded vector spaces 
\[ \SC(\UO_\bullet,\FF) = \bigoplus_{n\geq 0} \SC(\UO_{2n},\FF)
\qquand
\SC(\USp_\bullet,\FF) = \bigoplus_{n \geq 0} \SC(\USp_{2n},\FF).\]
Benedetti's work \cite{CB} defines a Hopf algebra structure on $\SC(\UO_\bullet,\FF)$, and there is an analogous structure on $\SC(\USp_\bullet,\FF)$.
In Section \ref{sc-sect} we propose a unified approach to the construction of all three of these Hopf algebras, by showing that they arise from a simple family of strongly linearized Hopf monoids in species; see Theorem \ref{sc-thm}.
This perspective yields an easy proof that the Hopf algebras 
$ \SC(\USL_\bullet,\FF)$ and $ \SC(\UO_\bullet,\FF')$
 considered in \cite{AZ} and \cite{CB} are actually isomorphic whenever $|\FF| = 2|\FF'| - 1$; see Theorem \ref{hiso-thm}.

\subsection{Combinatorial Hopf monoids}
\label{intro-comb-sect}

As introduced in \cite{ABS}, a \emph{combinatorial Hopf algebra}
is a pair $(\H,\zeta)$ where $\H = \bigoplus_{n\geq 0} \H_n$ is a graded connected Hopf algebra over $\kk$ such that each subspace  $ \H_n $ is finite-dimensional, and $\zeta : \H\to \kk$ is an algebra homomorphism (referred to as the \emph{character}). 
A morphism between  combinatorial Hopf algebras $ (\H,\zeta) $ and $ (\H',\zeta')$  is a morphism of graded connected Hopf algebras $\alpha : \H \to \H'$ such that $\zeta = \zeta' \circ \alpha$.
Aguiar, Bergeron, and Sottile \cite[Theorem 4.3]{ABS}
prove that $\Sym$ is a terminal object in the full subcategory of  combinatorial Hopf algebras which are cocommutative.
In Section \ref{comb-sect} we describe how 
this characterization of  $\Sym$ lifts to the  species of set partitions $\bfPi$ (see Example \ref{Pi-ex}).
In detail, we define a category of
 \emph{(cocommutative) combinatorial Hopf monoids} in species and 
 review how results of Aguiar and Mahajan \cite{species} implies the natural Hopf monoid structure on $\bfPi$ is a terminal object in this category; see Theorem \ref{terminal-thm}.
As applications, we use this result to give an alternate construction of the Frobenius characteristic map, identifying class functions on the symmetric group with $\Sym$, and also the definition of a certain  map $\Sym \to \NCSym$ considered in \cite[Section 4]{Sagan1}.

\subsection*{Acknowledgements}

I am grateful to Marcelo Aguiar for answering several questions about Hopf monoids in species, and to Carolina Benedetti for many helpful discussions.

\section{Preliminaries on species} 
\label{prelim-sect1}

The primary reference for this section is Aguiar and Mahajan's book \cite{species}, whose conventions we mostly follow. Additional references for this material  include \cite{species1,species2,blp,joyal}.

\subsection{Vector species}

A \emph{vector species} is a functor $\p : \FB \to \Vec$
where $\FB$ denotes the category of finite sets with bijective maps as morphisms and $\Vec$ denotes the category of $\kk$-vector spaces.
A species $\p$ thus assigns to each finite set $S$ a vector space $\p[S]$ and to 
each bijection  $\sigma : S \to S'$ between finite sets a linear map $\p[\sigma] : \p[S]\to \p[S']$. 
The only condition which these assignments must satisfy is that
\[\label{functorial-eq} \p[\id_S] = \id_{\p[S]} \qquand \p[\sigma \circ \sigma'] = \p[\sigma]\circ \p[\sigma']\]
 whenever $\sigma$ and $\sigma'$ are composable bijections between finite sets.
The composition rule implies that 
$\p[\sigma]$ is always a bijection, since $\p[\sigma^{-1}]$ affords its inverse. 

Vector species form a category in which morphisms are natural transformations. Thus a morphism  $f: \p \to \q$ of vector species  consists of a $\kk$-linear map $f_S : \p[S] \to \q[S]$ for each finite set  $S$, such that 
\[\q[\sigma]\circ f_S = f_{S'} \circ \p[\sigma]\]
for all bijections $\sigma : S \to S'$.
We refer to the map $f_S$ as the \emph{$S$-component} of $f$.

\begin{example}
Two basic vector species are $\one$ and $\bfE$. The former is defined such that $\one[\varnothing] = \kk$ and $\one[S] =0$ for all nonempty sets $S$. The \emph{exponential species} $\bfE$ is given by setting $\bfE[S] = \kk$ for all finite sets $S$ and $\bfE[\sigma] = \id$ for all bijections $\sigma$ between finite sets.
\end{example}

We are most often interested in vector species with the following additional structure.

\begin{definition} 
A \emph{connected species} is a vector species $\p$ with inverse $\kk$-linear bijections $\eta_\varnothing: \kk \to \p[\varnothing]$ and $\varepsilon_\varnothing : \p[\varnothing] \to \kk$.
We refer to 
these maps as the \emph{unit} and \emph{counit} of 
$\p$, respectively. 
\end{definition}

Connected species form a category in which morphisms are natural transformations $f: \p \to \p'$ with the property that 
$f_\varnothing(1_\p) = 1_{\p'}$ where
$1_\p$ and $1_{\p'}$ are the images of $1_\kk \in \kk$ under the units of $\p$ and $\p'$ respectively; see \cite[Fact 2.1.3]{Me}. 

If $\p$ is a vector species with $\p[\varnothing] = \kk$ (for example, $\one$ and $\bfE$),
then,
unless explicitly noted otherwise,
we view $\p$ as the connected species whose unit and counit are the identity maps on $\kk$.
%
Similarly, if $\p[\varnothing]$ is defined as a vector space with a natural basis with one element $1_\p$, then we view $\p$ as the connected species whose unit and counit are the linearizations of the maps $1_\p \mapsto 1_\kk$ and $1_\kk \mapsto 1_\p$.
This convention makes the vector species in each of the following examples connected.

 \begin{example}
 Let $C$ be any set.
Define $\bfE_C$ as the vector species of $C$-valued maps, so that if $S$ is a finite set then
 $\bfE_C[S]$ is the $\kk$-vector space whose basis is the set of  maps $f : S \to C$. If $\sigma : S \to S'$ is a bijection then  $\bfE_C[\sigma]$ acts on the basis of maps by the formula $f\mapsto f\circ \sigma^{-1}$.
\end{example}

Observe that up to isomorphism $\bfE_C$ depends only on the cardinality of $C$, and that $\one \cong \bfE_\varnothing$ and $\bfE \cong \bfE_{\{1\}}$.
Next, recall that a \emph{partition} of a set $S$ is a set of pairwise disjoint nonempty sets (referred to as \emph{blocks}) whose union is $S$. 

\begin{example}
Define $\bfPi$ as the vector species of set partitions, so that $\bfPi[S]$ is the $\kk$-vector space whose basis 
is the set of partitions of $S$. If $\sigma : S \to S'$ is a bijection then $\bfPi[\sigma]$ acts on the basis of set partitions by the formula $X \mapsto \{ \sigma(B) : B \in X\}$.
\end{example}

\begin{example}
Define $\bfL$ as the vector species of linear orders, so that $\bfL[S]$ is the $\kk$-vector space whose basis 
is the set of bijections $\ell: [n] \to S$ (which we refer to as \emph{linear orders} or just \emph{orders}) where $|S| = n$ and $[n] =\{1,\dots,n\}$. If $\sigma : S \to S'$ is a bijection then  $\bfL[\sigma]$ acts on the basis of orders   by the formula $\ell \mapsto \sigma\circ \ell$.
\end{example}

\begin{example}
Define $\bfSigma$ as the vector species of permutations, so that $\bfSigma[S]$ is the $\kk$-vector space whose basis 
is the set of bijections $\pi: S\to S$. If $\sigma : S \to S'$ is a bijection then $\bfSigma[\sigma]$ acts on the basis of permutations   by the formula $\pi \mapsto \sigma \circ \pi \circ \sigma ^{-1}$.
\end{example}

\subsection{Monoidal structures}

Here we review briefly the explicit definitions of  monoids, comonoids, and Hopf monoids in the category of connected species with the \emph{Cauchy product} (see \cite[Definition 8.5]{species}). 
For a more leisurely presentation of this material, see \cite[Sections 8.2 and 8.3]{species} or \cite[Section 3]{Me}.


\begin{notation} In commutative diagrams, whenever we write 
\[
\begin{diagram}
V_1 \otimes V_2 \otimes \dots \otimes V_k && \rTo^{\sim} && V_{\sigma(1)} \otimes V_{\sigma(2)} \otimes \dots \otimes V_{\sigma(k)}
  \end{diagram}
  \]
  where each $V_i$ is a $\kk$-vector
  space and $\sigma \in S_k$ is a permutation, we mean the obvious isomorphism given by linearly extending the map on tensors 
$v_1 \otimes v_2 \otimes \dots \otimes v_k  \mapsto v_{\sigma(1)} \otimes v_{\sigma(2)} \otimes \dots \otimes v_{\sigma(k)}$.
\end{notation}

Suppose $\p$ is a vector species and $\nabla$ and $\Delta$ are systems of $\kk$-linear maps 
\[ \nabla_{S,T} : \p[S]\otimes \p[T]\to\p[S\sqcup T]
\qquand
\Delta_{S,T} : \p[S\sqcup T] \to \p[S]\otimes \p[T]\]
 indexed by pairs of disjoint finite sets $S$, $T$. We say that $\nabla$ is a \emph{product} on $\p$ and that $\Delta$ is a \emph{coproduct} on $\p$ if the diagrams 
 \[\begin{diagram}
 \p[ S \sqcup T] && \rTo^{\p[\sigma]} && \p[S'\sqcup T'] \\
\uTo^{\nabla_{S,T}}&  &&& \uTo_{\nabla_{S',T'}} \\
\p[S] \otimes \p[T] & &\rTo^{ \p[\sigma|_S]\otimes \p[\sigma|_T] }  && \p[S']\otimes \p[T'] 
\end{diagram}
\qquad\qquad
\begin{diagram}
 \p[ S \sqcup T] && \rTo^{\p[\sigma]} && \p[S'\sqcup T'] \\
\dTo^{\Delta_{S,T}}&  &&& \dTo_{\Delta_{S',T'}} \\
\p[S] \otimes \p[T] &&\rTo^{ \p[\sigma|_S]\otimes \p[\sigma|_T] } && \p[S']\otimes \p[T'] 
\end{diagram}
\]
respectively commute
 for any bijection $\sigma : S\sqcup T \to S' \sqcup T'$ with $\sigma(S) = S'$ and $\sigma(T) = T'$.
In turn,
we say that a product $\nabla$ is \emph{associative} (respectively, \emph{commutative}) if the diagram on the left (respectively, right)
\[
\begin{diagram}
  \p[R]\otimes \p[S]\otimes \p[T] &&& \rTo^{\nabla_{R, S} \otimes \id} &&& \p[R\sqcup S]\otimes \p[T] \\
\dTo^{\id\otimes \nabla_{S,T}}&  &&&&& \dTo_{\nabla_{R\sqcup S,T}} \\
\p[R] \otimes \p[S\sqcup T] & &&\rTo^{\nabla_{R,S\sqcup T}}  &&& \p[ R\sqcup S \sqcup T] \end{diagram}
\qquad
\begin{diagram}
  \p[S]\otimes \p[T] && \rTo^{\sim} && \p[T]\otimes \p[S] \\
& \rdTo_{\nabla_{S,T}} & &\ldTo_{\nabla_{T,S}} \\
& & \p[S\sqcup T] 
\end{diagram}
 \]
 commutes for all pairwise disjoint finite sets $R$, $S$, $T$.
A coproduct $\Delta$ is \emph{coassociative} (respectively, \emph{cocommutative}) if 
the same diagrams commute when we replace all arrows labeled by $\nabla$'s with arrows in the reverse direction labeled by $\Delta$'s.

Let $\p$ be a connected species with unit $\eta_\varnothing$ and counit $\varepsilon_\varnothing$;
define $1_\p = \eta_\varnothing(1_\kk)$.
Suppose $\nabla$ is a product on $\p$ and   $\Delta$ is a coproduct on $\p$. We  have the following definitions:
\begin{itemize}
\item The pair $(\p,\nabla)$ is a \emph{connected monoid} if $\nabla$ is associative
and 
\[ \nabla_{\varnothing,S}(1_\p \otimes \lambda) = \nabla_{S,\varnothing}(\lambda \otimes 1_\p) =   \lambda\]
for all $\lambda \in \p[S]$ and all finite sets $S$.
 A morphism  $ (\p,\nabla) \to (\p',\nabla')$ between  connected monoids is a morphism of connected species  $f : \p \to \p'$   which commutes with  products  in the sense that 
$f_{S\sqcup T}\circ \nabla_{S,T} = \nabla_{S,T}' \circ (f_{S} \otimes f_{T})$ for all  disjoint finite sets $S$, $T$.  
 A connected monoid is \emph{commutative} if its product  is commutative.
 
 \item The pair $(\p,\Delta)$ is a \emph{connected comonoid} if $\Delta$ is coassociative
and 
\[ \Delta_{\varnothing,S}(\lambda) = 1_\p \otimes \lambda \qquand \Delta_{S,\varnothing}(\lambda) =   \lambda\otimes 1_\p\]
for all $\lambda \in \p[S]$ and all finite sets $S$.
 A morphism  $ (\p,\Delta) \to (\p',\Delta')$ between  connected comonoids is a morphism of connected species  $f : \p \to \p'$  which commutes with  coproducts in the sense that 
$(f_S\otimes f_T)\circ \Delta_{S,T} = \Delta_{S,T}' \circ f_{S\sqcup T}$ for all  disjoint finite sets $S$, $T$. 
A connected comonoid is \emph{cocommutative} if its coproduct  is cocommutative.

\end{itemize}
Finally, we define a notion of compatibility between product and coproducts.
Namely, we say that a product $\nabla$ and a coproduct $\Delta$ on the same vector species $\p$ are \emph{Hopf compatible} if 
for any
two disjoint decompositions $I = R\sqcup R' = S\sqcup S'$ of the same finite set,
   the diagram
\be\label{hopf-diagram}
\begin{diagram}
\p[R] \otimes \p[R'] &\rTo^{\nabla_{R,R'}}& & \p[I] &&\rTo^{\Delta_{S,S'}} & \p[S]\otimes\p[S'] \\ 
\dTo^{\Delta_{A,B}\otimes \Delta_{A',B'}}&&  &&  && \uTo_{\nabla_{A,A'}\otimes \nabla_{B,B'}} \\ 
\p[A] \otimes \p[B] \otimes \p[A'] \otimes \p[B']&&& \rTo^{\sim} &&&\p[A] \otimes \p[A'] \otimes \p[B] \otimes \p[B'] 
 \end{diagram}
 \ee
 commutes, where 
$A = R \cap S 
$ and
$B = R\cap S'
$ and
$A' = R'\cap S
$ and
$B' = R' \cap S'.
$
Intuitively, if the product and coproduct are ``joining'' and ``splitting'' operations, then this condition means that we get the same thing by joining then splitting two objects as by splitting two objects into four  then joining appropriate pairs.
We now have a third definition:
\begin{itemize}

\item The  triple $(\p,\nabla,\Delta$) is a \emph{connected Hopf monoid} if the pair $(\p,\nabla)$ is a connected monoid and the pair $(\p,\Delta$) is a connected comonoid and $\nabla$ and $\Delta$ are Hopf compatible. A morphism of connected Hopf monoids is a morphism of connected species which commutes with  products and coproducts. 
A connected Hopf monoid is \emph{commutative} or \emph{cocommutative} if it is commutative as a monoid or cocommutative as a comonoid.
 \end{itemize}
The definition of a connected Hopf monoid given here
 is also that of a \emph{connected bimonoid} in \cite{species}. 
In a general symmetric monoidal category, a Hopf monoid is a bimonoid
for which an \emph{antipode}  exists (see \cite[Section 1.2.5]{species}).
A connected Hopf monoid in species always has an antipode; this is a certain morphism of connected species $\S : \p \to \p$, a general formula for which is given by \cite[Proposition 8.13]{species}.

\begin{remark}
There are more general notions of \emph{monoids}, \emph{comonoids}, and \emph{Hopf monoids} in species which may or may not be connected (see \cite[Chapter 8]{species})
and there are various natural functors from species to graded vector spaces which turn these respective structures into algebras,  coalgebras, and Hopf algebras (see Section \ref{fock-sect}).
Connected Hopf monoids are precisely the Hopf monoids which correspond to connected Hopf algebras via these functors.
\end{remark}

Each of the four connected species defined at the end of the previous section has a natural Hopf monoid structure, described as follows.
 
\begin{example}\label{E-ex}
$\bfE_C$ is a connected Hopf monoid 
with the product and coproduct given by
\[ \nabla_{S,T}(f\otimes g) = f\sqcup g\qquand \Delta_{S,T}(h) = h|_S \otimes h|_T\]
for maps $f : S \to C$ and $g : T \to C$ and $h : S\sqcup T \to C$.
Here $h|_S$ denotes the restriction of $h$ to $S$, while $f\sqcup g$ denotes the unique map $S\sqcup T \to C$ which restricts to $f$ on $S$ and to $g$ on $T$.
\end{example}

\begin{example}\label{Pi-ex}
$\bfPi$ is a connected Hopf monoid 
with the product and coproduct given by
\[ \nabla_{S,T}(X\otimes Y) = X \sqcup Y \qquand \Delta_{S,T}(Z) = Z|_S \otimes Z|_T\]
for partitions $X$, $Y$,  $Z$ of the sets $S$, $T$,  $S\sqcup T$. Here $Z|_S$ denotes the partition of $S$ induced by the partition $Z$ of the larger set $S\sqcup T$.
Explicitly, $Z|_S = \{ B \cap S : B \in Z \} - \{\varnothing\}$.
\end{example}


\begin{example}\label{L-ex}
$\bfL$ is a connected Hopf monoid
 with the product and coproduct given by
\[ \nabla_{S,T}(\ell_1\otimes \ell_2) = \ell_1 * \ell_2 \qquand \Delta_{S,T}(\ell) = \ell' \otimes \ell''\]
for linear orders $\ell_1$, $\ell_2$,  $\ell$ of the sets $S$, $T$,  $S\sqcup T$. Here $\ell'$ and $\ell''$ denote the linear order of $S$ and $T$ induced by $\ell$, and $\ell_1*\ell_2$ denotes the unique linear order of $S\sqcup T$ restricting to $\ell_1$ on $S$ and to $\ell_2$ on $T$, such that every element of $T$ exceeds every element of $S$. 

\end{example}

\begin{example}\label{S-ex}
$\bfSigma$ is a connected Hopf monoid with the product and coproduct given by
\[ \nabla_{S,T}(\sigma_1\otimes \sigma_2) = \sigma_1 \sqcup \sigma_2 \qquand \Delta_{S,T}(\sigma) = \sigma' \otimes \sigma''\]
for permutations $\sigma_1$, $\sigma_2$,  $\sigma$ of the sets $S$, $T$,  $S\sqcup T$.
Here $\sigma_1\sqcup \sigma_2$ is defined as in Example \ref{E-ex}, while $\sigma'$ is the permutation of $S$ such that if $i \in S$ then $\sigma'(i)$ is the first element of the sequence $\sigma(i),\ \sigma^2(i),\ \sigma^3(i),\ \dots$ belonging to $S$ (with $\sigma''$ defined likewise).
\end{example}

\section{Preliminaries on self-duality}
\label{prelim-sect2}

We review here the  results from \cite{Me} which 
 are relevant to the developments in the next section.   

\subsection{Linearization}
\label{lin-sect} 
We adapt the definitions in this section from \cite[Section 8.7]{species}.
A \emph{set species} is a functor $\P : \FB \to \Set$ where $\Set$ denotes the usual category of sets
with arbitrary maps as morphisms. 
Thus, $\P$ assigns, to each finite set $S$ and bijection $\sigma: S \to S'$, a set $\P[S]$ and a bijection $\P[\sigma] : \P[S] \to \P[S']$, subject   to  the conditions 
\[
\P[\id_S] = \id_{\P[S]}\qquand \P[\sigma\circ \sigma']= \P[\sigma]\circ \P[\sigma']
\]
whenever $\sigma$ and $\sigma'$ are composable bijections between finite sets.
We say that $\P$ is \emph{finite} if $\P[S]$ is  always finite. Likewise, we say that a vector species  $\p$ is \emph{finite-dimensional} if $\p[S]$ is always finite-dimensional.
If $\Q$ is another set species then we write $\Q \subset \P$ and say that $\Q$ is a \emph{subspecies}
of $\P$ if $\Q[S]\subset\P[S]$ for all finite sets $S$ and if $\Q[\sigma]$ is the restriction of $\P[\sigma]$  for all bijections $\sigma$. A vector species $\p$ is itself a set species, and  the notion of a subspecies of $\p$ is defined in the same way.

If $\P$ is a set species then we write $\kk\P$ for the vector species  such that 
$(\kk\P)[S]$ is the vector space with basis $\P[S]$
and
$(\kk\P)[\sigma]$ is the vector space isomorphism which is the linearization of $\P[\sigma]$.
If $\P[\varnothing] $ consists of exactly one element (which we will denote $1_\P$), then we view the  $\kk\P$
as a connected species  with respect to the unit $\kk \to \kk\P[\varnothing]$ and counit $\kk\P[\varnothing] \to \kk$ given by the  $\kk$-linear maps with 
$ 1_\kk \mapsto 1_\P$ and $  1_\P \mapsto 1_\kk$.

A connected species $\p$ is \emph{linearized} if there exists a set species $\P$ such that $\p = \kk\P$ as connected species. We refer to the set species $\P$ as a \emph{basis} for $\p$.
Explicitly, this means that each set $\P[S]$ is a basis for $\p[S]$, that each bijection $\P[\sigma]$ is the restriction of the linear map $\p[\sigma]$, and also that the unit and counit of $\p$ restrict to bijections $\{1_\kk\} \leftrightarrow \P[\varnothing]$.
If $\p$ is a vector species which is not necessarily connected, then 
we say that a set subspecies $\P\subset \p$ is a basis if merely each set $\P[S]$ is a basis for $\p[S]$.
A connected species has a basis if it has a basis when viewed as an arbitrary vector species, and a vector species $\p$ has a basis if and only if the representation of the symmetric group $S_n$ on $\p[n] = \p[\{1,\dots,n\}]$ is always a permutation representation  \cite[Proposition 3.1.1]{Me}.

\begin{notation} We denote by $\E$, $\E_C$, $\Pi$, $\L$, and $\fk S$ the set species which are the obvious bases of the connected species  $\bfE$, $\bfE_C$, $\bfPi$, $\bfL$, and $\bfSigma$.
For example, $\Pi$ is then the set species such that $\Pi[S]$ is the set of partitions of $S$.
\end{notation}

The main definition of this section is the following.

\begin{definition}\label{lin-def}
A connected Hopf monoid  $(\p,\nabla,\Delta)$ is \emph{linearized} in some basis $\P$ for $\p$ if for all disjoint finite sets $S$, $T$ the following conditions hold:
\begin{itemize}
\item $\nabla_{S,T}$ restricts to a map $\P[S]\times \P[T] \to \P[S\sqcup T]$.
\item $\Delta_{S,T}$ restricts to a map $\P[S\sqcup T] \to \P[S] \times \P[ T]$.

\end{itemize}
\end{definition}

All of the connected Hopf monoids given as examples in the previous section are linearized.
We refer to the first condition in Definition \ref{lin-def} as the requirement that the product $\nabla$ is linearized in the basis $\P$, and the second condition as the requirement that the coproduct $\Delta$ is linearized in $\P$. 
A connected monoid (respectively, comonoid) is \emph{linearized} in some basis if in this basis its product (respectively, coproduct) is linearized in this sense.

\subsection{Partial orders}

An important feature of a linearized Hopf monoid is its associated partial order, whose definition from \cite[Section 4.1]{Me} we review here.
If $\P$ is a set species, then let $\P\times \P$ denote the set species with $(\P\times \P)[S] = \P[S]\times \P[S]$ and $(\P\times \P)[\sigma] = \P[\sigma]\times \P[\sigma]$ for finite sets $S$ and bijections $\sigma$. 
We say that a subspecies $\cO \subset \P\times \P$ is \emph{transitive} if $(a,b),(b,c) \in \cO[S]$ implies $(a,c) \in \cO[S]$. A \emph{partial order} on a set species $\P$ is then a transitive subspecies $\cO\subset \P\times \P$ such that if $(a,b) \in \cO[I]$ then $a\neq b$.
%
When $\{ <\}$ is a partial order on $\P$ and  $\lambda,\lambda' \in \P[I]$ we write 
\[ \lambda < \lambda' \qquad\text{to indicate}\qquad (\lambda,\lambda') \in \{<\}[I].\]
In turn, we write $\lambda \leq \lambda'$ to mean $\lambda = \lambda'$ or $\lambda < \lambda'$.
With respect to this notation, $<$  corresponds to the usual notion of a partial order on each set $\P[I]$.
The condition that $\{<\}$ forms a subspecies of $\P\times \P$ is equivalent to requiring that $\P[\sigma](\lambda) < \P[\sigma](\lambda')$ whenever $\lambda < \lambda'$, for all bijections $\sigma : I \to I'$.

The following statement appears as \cite[Theorem G]{Me}. 

\begin{theorem}  \label{order-thm}
Let $\h= (\p,\nabla,\Delta)$ be connected Hopf monoid which is finite-dimensional, commutative, cocommutative, and linearized  in some basis $\P$.
 The (unique) minimal transitive subspecies $\{\prec\}\subset\P\times \P$ such that $(\lambda,\lambda') \in \{\prec\}[I]$   whenever $\lambda,\lambda' \in \P[I]$ are distinct elements and  \[\lambda = \nabla_{S,T}\circ \Delta_{S,T} (\lambda')\] for some disjoint decomposition $I = S\sqcup T$
is then a partial order on $\P$.
%
\end{theorem}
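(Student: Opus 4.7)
My plan is to reduce the theorem to two claims: (a) $\{\prec\}$ is a subspecies of $\P\times\P$, and (b) it is irreflexive, i.e.\ $(\lambda,\lambda)\notin\{\prec\}[I]$ for every $\lambda$ and $I$. For (a), the generating relation is preserved by species bijections: if $\sigma : I \to I'$ is a bijection and $\lambda = \nabla_{S,T}\circ\Delta_{S,T}(\lambda')$, then by naturality of $\nabla$ and $\Delta$ we have $\P[\sigma](\lambda) = \nabla_{\sigma(S),\sigma(T)}\circ\Delta_{\sigma(S),\sigma(T)}(\P[\sigma](\lambda'))$. The transitive closure of a subspecies is again a subspecies, so (a) follows.

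All the content is in (b). For this I will build an integer-valued monovariant on single-step instances of the generating relation. For each $\lambda\in\P[I]$ let
\[ \mathrm{fix}(\lambda) \;:=\; \{(S,T) : I = S\sqcup T,\ T_{S,T}(\lambda) = \lambda\},\qquad T_{S,T} := \nabla_{S,T}\circ\Delta_{S,T}, \]
and I claim that whenever $\lambda \neq \lambda'$ and $\lambda = T_{U,V}(\lambda')$, the strict containment $\mathrm{fix}(\lambda) \supsetneq \mathrm{fix}(\lambda')$ holds. Given this, finiteness of each $\P[I]$ (guaranteed by the finite-dimensional and linearized hypotheses) forces $|\mathrm{fix}|$ to strictly increase along any chain in the generating relation, so no cycle can exist, and the transitive closure $\{\prec\}$ is irreflexive.

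The claim splits into two parts. First, Hopf compatibility applied to $\Delta_{S,T}\circ\nabla_{S,T}$ with the same decomposition on both sides simplifies via the unit/counit identities to $\Delta_{S,T}\circ\nabla_{S,T} = \id_{\p[S]\otimes\p[T]}$, so each $T_{S,T}$ is idempotent. This immediately gives $T_{U,V}(\lambda) = T_{U,V}^2(\lambda') = \lambda$ and hence $(U,V)\in\mathrm{fix}(\lambda)$; conversely $(U,V)\notin\mathrm{fix}(\lambda')$ because $T_{U,V}(\lambda')=\lambda\neq\lambda'$. Second, to show $\mathrm{fix}(\lambda')\subseteq\mathrm{fix}(\lambda)$, fix $(S,T)\in\mathrm{fix}(\lambda')$, write $\Delta_{S,T}(\lambda')=\mu\otimes\nu$, and set $A = U\cap S$, $A' = V\cap S$, $B = U\cap T$, $B' = V\cap T$. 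Expanding $\Delta_{S,T}(\lambda) = \Delta_{S,T}\circ\nabla_{U,V}\circ\Delta_{U,V}(\lambda')$ by Hopf compatibility and collapsing the inner $\Delta\circ\nabla$'s by the first fact yields
\[ \Delta_{S,T}(\lambda) \;=\; T_{A,A'}(\mu) \otimes T_{B,B'}(\nu). \]
Applying $\nabla_{S,T}$ produces a four-fold product of elements in $\p[A],\p[A'],\p[B],\p[B']$ grouped along $S$ and $T$; commutativity and associativity of $\nabla$ identify this with the four-fold product of the same elements grouped along $U$ and $V$, which a parallel Hopf-compatibility expansion of $\Delta_{U,V}(\lambda')$ recognizes as $\lambda$ itself. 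Hence $T_{S,T}(\lambda) = \lambda$.

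The main obstacle I anticipate is this last four-term calculation: one must carefully unwind the Hopf-compatibility square with respect to two competing decompositions $I = S\sqcup T$ and $I = U\sqcup V$ of the same set, and then match two \emph{a priori} different groupings of the resulting four pieces. It is precisely here that all four hypotheses play a role, with Hopf compatibility driving the expansion, linearization ensuring $T_{S,T}$ is a self-map of $\P[I]$ so that $\mathrm{fix}$ is sensibly defined, and commutativity and cocommutativity permitting the reordering that identifies the two assemblies of $A,A',B,B'$ back into $I$.
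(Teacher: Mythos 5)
Your argument is correct. The paper itself gives no proof of this statement---it imports it verbatim as Theorem G of \cite{Me}---so there is no in-paper argument to compare against line by line; but your proof is self-contained and all of its ingredients are consistent with facts the paper uses elsewhere (the identity $\Delta_{S,T}\circ\nabla_{S,T}=\id$ is exactly \cite[Lemma 2.2.4]{Me}, quoted in the proof of Theorem \ref{posetisom-thm}, and the iterated maps $\nabla_{S_1,\dots,S_k}$, $\Delta_{S_1,\dots,S_k}$ and their invariance under reordering are set up in the remark following the theorem). The fix-set monovariant is the right idea: idempotence of $T_{U,V}$ gives the strict element $(U,V)\in\mathrm{fix}(\lambda)\setminus\mathrm{fix}(\lambda')$, and the containment $\mathrm{fix}(\lambda')\subseteq\mathrm{fix}(\lambda)$ reduces, exactly as you say, to expanding the Hopf-compatibility square for the two decompositions $I=S\sqcup T=U\sqcup V$, so that both $T_{S,T}(\lambda)$ and $\lambda$ become the iterated product $\nabla_{A,A',B,B'}$ (respectively $\nabla_{A,B,A',B'}$) of the same four components $\mu|_A,\mu|_{A'},\nu|_B,\nu|_{B'}$ of $\lambda'$, identified by commutativity of $\nabla$ and cocommutativity of $\Delta$. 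Two cosmetic remarks. First, finite-dimensionality of $\p[I]$ is not what makes the monovariant work: strict containment $\mathrm{fix}(\lambda)\supsetneq\mathrm{fix}(\lambda')$ along each generating step already forbids cycles, and in any case $\mathrm{fix}(\lambda)$ is finite simply because $I$ has finitely many ordered decompositions; so your proof in fact uses fewer hypotheses than the statement provides. Second, for $\mathrm{fix}$ to be defined on $\P[I]$ you need $T_{S,T}$ to restrict to a self-map of the basis, which is precisely where linearization of both $\nabla$ and $\Delta$ enters---you say this, and it is worth keeping explicit since it is the only place linearization is used.
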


\begin{remark}
We may describe the order $\{\prec\}$ more explicitly as follows.
Let $\h = (\p,\nabla,\Delta)$ be an arbitrary connected Hopf monoid.
Suppose   $I = S_1 \sqcup S_2 \sqcup \cdots \sqcup S_k$ is a pairwise disjoint decomposition of a finite set.  There are $(k-1)!$ ways of composing  maps of the form $\id \otimes \cdots  \otimes \nabla_{X,Y}\otimes \cdots \otimes \id$ and maps of the form $\id \otimes \cdots  \otimes \Delta_{X,Y}\otimes \cdots \otimes \id$ to create $\kk$-linear maps
\[   \p[S_1] \otimes \p[S_2] \otimes \cdots \p[S_k] \to \p[I].
\qquand
   \p[I] \to \p[S_1] \otimes \p[S_2] \otimes \cdots \p[S_k]. \]
The associativity conditions required of $\nabla$ and $\Delta$
imply that  the compositions defining the left map all coincide and the compositions defining the right map all coincide; in this way we obtain two uniquely defined $\kk$-linear maps 
which we respectively denote 
\be\label{nablam}
\nabla_{S_1,S_2,\dots,S_k}
\qquand
\Delta_{S_1,S_2,\dots,S_k}.\ee When $k=1$   both of these maps are just the identity on $\p[I]$.
When $\h$ is linearized in some basis $\P$, the maps \eqref{nablam} descend to maps of sets (to and from $\P[I]$, respectively), and in the situation of Theorem \ref{order-thm} the corresponding order $\{\prec\}$
 has this description:
if $\lambda,\lambda' \in \P[I]$ then 
\[\lambda \preceq \lambda'\qquad\text{if and only if}\qquad
\lambda = \nabla_{S_1,\dots,S_k}\circ \Delta_{S_1,\dots,S_k}(\lambda')\]
for some pairwise disjoint decomposition $I = S_1\sqcup \dots \sqcup S_k$.
\end{remark}

The theorem applies when
 $\h = \bfPi$ and $\P = \Pi$ is the natural basis of set partitions. In this case the corresponding partial order  on $\Pi$  is the ordering of set partitions by \emph{refinement}. Recall that if $X$, $Y$ are two partitions of the same set then we say that $X$ \emph{refines} $Y$ and we write  $X \leq Y$ if each block of $X$ is a subset of some block of $Y$.
 
 In the following section we will find that in the situation of Theorem \ref{order-thm} we may always identify the basis $\P$ with some species of ``labeled set partitions,'' and via this identification the order $\{\prec\}$ is a generalization of the refinement order.

%
%

%
%

\subsection{Strong self-duality}\label{ssd-sect}

The \emph{dual} of a vector species $\p$ is the vector species $\p^*$ with $\p^*[S] =( \p[S])^*$ and $\p[\sigma] =( \p[\sigma^{-1}])^*$ for finite sets $S$ and bijections $\sigma$,
where 
we write $V^*$ for the usual dual space of a vector space $V$ and $f^* : W^* \to V^*$ for the usual transpose of a linear map $f : V \to W$.
If $\h = (\p,\nabla,\Delta)$ is a connected Hopf monoid,
then its \emph{dual} is the connected Hopf monoid $\h^*$ whose underlying species is $\p^*$ and whose product and coproduct  
are the systems of maps comprised of the respective compositions
\[   \p^*[S]\otimes \p^*[ T]\xrightarrow{\sim} \(\p[S]\otimes \p[T]\)^*  \xrightarrow{(\Delta_{S,T})^*} \p^*[S\sqcup T]\]
and
\[   \p^*[S\sqcup T] \xrightarrow{(\nabla_{S,T})^*} \(\p[S]\otimes \p[T]\)^* \xrightarrow{\sim} \p^*[S] \otimes \p^*[T]\]
for disjoint finite sets $S$, $T$. Here, the maps $\xrightarrow{\sim}$ denote the  canonical isomorphisms $(V\otimes W)^* \cong V^* \otimes W^*$ for vector spaces $V$, $W$.
For a more extensive discussion of duality in species, see \cite[Section 2.3]{Me} or \cite[Section 8.6]{species}.

A connected Hopf monoid $\h = (\p,\nabla,\Delta)$ is \emph{self-dual} if it is isomorphic to its dual $\h^*$. The  following variation of Definition \ref{lin-def} turns out to give a property strictly stronger than self-duality, whose discussion will be our principal concern in this section. 

\begin{definition}\label{ssd-def} A connected Hopf monoid $(\p,\nabla,\Delta)$ is \emph{strongly self-dual} (or \emph{SSD} for short) in some basis $\P$ for $\p$ if 
for all disjoint finite sets $S$, $T$ the following conditions hold:
\begin{itemize}
\item $\nabla_{S,T}$ restricts to a map $\P[S]\times \P[T] \to \P[S\sqcup T]$.
\item $\P[S\sqcup T]$ is a  finite subset of the union of the image of $\nabla_{S,T}$ and the kernel of $\Delta_{S,T}$.
\end{itemize}
\end{definition}

Such Hopf monoids are considered in detail in \cite[Section 3.3]{Me}; they include Examples \ref{E-ex}, \ref{Pi-ex}, and \ref{S-ex} (though these Hopf monoids are not strongly self-dual in their obvious bases). It follows by \cite[Proposition 3.3.5]{Me} that
a connected Hopf monoid which is strongly self-dual is self-dual in the ordinary sense given above, and also has the  property of being \emph{freely self-dual} (as defined in \cite[Section 3]{Me}) in the sense that the structure constants of its product and coproduct  coincide for its distinguished basis.
Note that if $(\p,\nabla,\Delta)$ is   strongly self-dual then $\p$ must be finite-dimensional.

It turns out that all strongly self-dual  Hopf monoids  originate from the following construction.
Let $\Q$ be a set species.
A \emph{$\Q$-labeled set partition} $X$ is  
a
  set of pairs $(B,\lambda)$, which we call the \emph{blocks} of $X$, consisting of a nonempty finite set  $B$ and an element  $\lambda \in \Q[B]$, subject to the condition that 
$B \cap B' = \varnothing$
whenever 
$(B,\lambda)$ and $(B',\lambda')$ are distinct
blocks. 
The \emph{shape} of a labeled set partition $X$ is the unlabeled set partition
\be\label{shape} \sh(X) \omdef= \{B : (B,\lambda) \in X\}.\ee
We   say that $X$ is a partition of $S$ if $\sh(X) \in \Pi[S]$.
%
The collection of $\Q$-labeled set partitions forms a set species, which we denote $\cS(\Q)$, in the following way: for each finite set $I$ define 
$\cS(\Q)[I]$
as the set of $\Q$-labeled partitions of $I$,
and
for each bijection $\sigma : I \to I'$ between finite sets 
define $\cS(\Q)[\sigma]$
as the map sending $X \in \cS(\Q)[I]$ to the labeled partition $X' \in \cS(\Q)[I']$ such that 
\[(B,\lambda)\in X
\qquad\text{if and only if}
\qquad 
 (\sigma(B), \Q[\sigma](\lambda))\in X'.\]
Note that $\cS(\Q)$ has no dependence on the set $\Q[\varnothing]$.

Since $\cS(\Q)[\varnothing] = \{\varnothing\}$, the vector species $\kk\cS(\Q)$ 
is connected.
We view this connected species as a Hopf monoid with respect to the following product and coproduct.
Define $\nabla$ as the product on $\kk\cS(\Q)$ such that 
$\nabla_{S,T}(X\otimes Y )  = X\sqcup Y$
whenever $X$ and $Y$  are $\Q$-labeled partitions of disjoint finite sets $S$ and $T$.
Define $\Delta$ as the coproduct on $\kk\cS(\Q)$ 
such that if $S$, $T$ are disjoint finite sets and $Z \in \cS(\Q)[S\sqcup T]$ then
\[
\Delta_{S,T}(Z) = \begin{cases} X\otimes Y&\text{if there are $X \in \cS(\Q)[S]$ and $Y\in \cS(\Q)[T]$ with $Z = X \sqcup Y$} \\ 0&\text{otherwise}.\end{cases}
\]
The next fact is a rewording of \cite[Corollary 3.3.7]{Me} and also follows from more general results discussed in \cite[Section 11.3]{species}.
\begin{fact}\label{ssd-fact} 
With $\nabla$ and $\Delta$ defined as above, the triple $(\kk\cS(\Q),\nabla,\Delta)$ is a connected Hopf monoid. This connected Hopf monoid  is strongly self-dual whenever $\Q$ is finite.
\end{fact}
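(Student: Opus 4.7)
The plan is to reduce each Hopf monoid axiom to an elementary observation about how labeled blocks of a partition in $\cS(\Q)$ interact with a disjoint decomposition of the ground set. The key principle governing every verification is that the labels attached to blocks play no role beyond being carried along passively: neither $\nabla$ nor $\Delta$ ever alters a label $\lambda$ on a block $B$. Consequently the entire argument reduces to the unlabeled case, which is essentially the Hopf monoid structure on $\bfPi$ given in Example \ref{Pi-ex}.

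For the monoid axioms, observe that $\nabla_{S,T}(X \otimes Y) = X \sqcup Y$ is literally disjoint union of sets of labeled blocks, so associativity, commutativity, and the unit axiom (with the empty partition serving as $1$) are immediate. For the comonoid axioms, note that $\Delta_{S,T}(Z)$ is nonzero precisely when every block $(B,\lambda) \in Z$ satisfies $B \subseteq S$ or $B \subseteq T$, in which case $\Delta_{S,T}(Z) = Z|_S \otimes Z|_T$ with $Z|_S = \{(B,\lambda) \in Z : B \subseteq S\}$. The iterated coproduct $\Delta_{S_1,\dots,S_k}(Z)$ is then nonzero exactly when every block of $Z$ sits inside some $S_i$, a condition symmetric in the $S_i$, so coassociativity and cocommutativity follow, and the counit axiom is immediate.

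For Hopf compatibility, fix decompositions $I = R \sqcup R' = S \sqcup S'$ and set $A, B, A', B'$ as in \eqref{hopf-diagram}. Starting from $X \otimes X' \in (\kk\cS(\Q))[R] \otimes (\kk\cS(\Q))[R']$, the composition $\Delta_{S,S'} \circ \nabla_{R,R'}$ yields a nonzero result exactly when every block of $X$ lies in $A$ or $B$ \emph{and} every block of $X'$ lies in $A'$ or $B'$, which is precisely the condition for $(\Delta_{A,B} \otimes \Delta_{A',B'})(X \otimes X')$ to be nonzero. Under this condition, both paths around the diagram produce $(X|_A \sqcup X'|_{A'}) \otimes (X|_B \sqcup X'|_{B'})$, so \eqref{hopf-diagram} commutes.

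For strong self-duality when $\Q$ is finite, first note that $\cS(\Q)[I]$ is finite for each finite $I$ because there are finitely many partitions of $I$ and finitely many label choices per block. The first bullet of Definition \ref{ssd-def} holds by construction. For the second, every $Z \in \cS(\Q)[S \sqcup T]$ falls into exactly one of two cases: either every block of $Z$ lies entirely in $S$ or entirely in $T$, in which case $Z = \nabla_{S,T}(Z|_S \otimes Z|_T)$ lies in the image of $\nabla_{S,T}$; or some block of $Z$ straddles the decomposition, in which case $\Delta_{S,T}(Z) = 0$ and $Z \in \ker \Delta_{S,T}$. The only step requiring any real care is Hopf compatibility, and even there the bookkeeping is elementary once one tracks which blocks land on which side.
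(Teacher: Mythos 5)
Your proof is correct. The paper itself offers no argument for this Fact, deferring instead to \cite[Corollary 3.3.7]{Me} and to the general theory of free commutative Hopf monoids in \cite[Section 11.3]{species}; your direct verification is a legitimate and self-contained alternative, and its organizing principle is the right one: since neither $\nabla$ nor $\Delta$ ever modifies a label, every axiom reduces to a statement about which blocks of a set partition are contained in which piece of a disjoint decomposition, i.e.\ to the unlabeled case of Example \ref{Pi-ex}. Your characterization of the coproduct ($\Delta_{S,T}(Z)\neq 0$ exactly when no block straddles the decomposition, in which case the factors $Z|_S$ and $Z|_T$ are uniquely determined) is what makes all of the compatibility checks, including \eqref{hopf-diagram}, go through, and your two-case analysis for the second bullet of Definition \ref{ssd-def} is exactly the dichotomy needed for strong self-duality. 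The only items you pass over in silence are the naturality conditions --- that $\nabla_{S,T}$ and $\Delta_{S,T}$ commute with the bijections $\cS(\Q)[\sigma]$, which is part of what it means to be a product and coproduct on a species --- and the unit/counit axioms for the empty partition; both are immediate from the fact that relabelling the ground set by $\sigma$ carries blocks to blocks and preserves containment in $S$ and $T$, so this is a routine omission rather than a gap.
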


%
%
%

Given a connected monoid $(\p,\nabla)$ and a set subspecies $\P \subset \p$, define $\cP(\P,\nabla)$ as the set subspecies of $\P$ with $\cP(\P,\nabla)[\varnothing] = \varnothing$ and 
with $\cP(\P,\nabla)[I]$ comprised, for nonempty sets $I$, of the elements in $\P[I]$ which are not in the image of $\nabla_{S,T}$ for any disjoint decomposition $I =S\sqcup T$ into two nonempty subsets.
The next statement paraphrases \cite[Theorem 3.3.11]{Me}.

\begin{theorem}\label{ssd-thm}
Let $\h = (\p,\nabla,\Delta)$ be a connected Hopf monoid which is strongly self-dual in some basis $\P$,
and write $\Q = \cP(\P,\nabla)$.
Then
$\h$ is isomorphic to $\kk\cS(\Q)$ with the Hopf monoid structure described in Fact \ref{ssd-fact}.
In particular, $\h$ is both commutative and cocommutative. 
\end{theorem}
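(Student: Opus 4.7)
The plan is to construct for each finite set $I$ a bijection $\cS(\Q)[I]\leftrightarrow \P[I]$ whose linearization is the desired Hopf monoid isomorphism. The starting point is the following consequence of Hopf compatibility: applying the diagram \eqref{hopf-diagram} with $(R,R',S,S')=(S,T,S,T)$ and collapsing the trivial factors coming from the unit/counit axioms gives $\Delta_{S,T}\circ\nabla_{S,T}=\id$ on $\p[S]\otimes\p[T]$; a parallel application with $(R,R',S,S')=(S,T,T,S)$ gives $\Delta_{T,S}\circ\nabla_{S,T}(\mu\otimes\nu)=\nu\otimes\mu$. An iterated version obtained via associativity and coassociativity shows $\Delta_{S_1,\ldots,S_k}\circ\nabla_{S_1,\ldots,S_k}=\id$. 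In particular, $\nabla_{S,T}$ restricted to $\P[S]\times\P[T]$ is injective.

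Next, observe that the SSD condition forces $\Delta_{A,B}(\mu)=0$ whenever $\mu\in\Q[S]$ and $S=A\sqcup B$ is a nontrivial decomposition, since $\mu$ is not in the image of $\nabla_{A,B}$ by definition of $\Q=\cP(\P,\nabla)$. Combined with iterated Hopf compatibility, this implies that for $\mu_i\in\Q[S_i]$ the coproduct $\Delta_{X,Y}\bigl(\nabla_{S_1,\ldots,S_k}(\mu_1\otimes\cdots\otimes\mu_k)\bigr)$ is nonzero only when each $S_i$ is entirely contained in $X$ or entirely in $Y$, in which case the answer is the tensor product of the two sub-products. Now I use that strong self-duality makes $\h$ freely self-dual (\cite[Proposition 3.3.5]{Me}), furnishing an inner product $\langle\cdot,\cdot\rangle$ on each $\p[I]$ under which the basis $\P[I]$ is orthonormal and $\nabla$ and $\Delta$ are adjoint. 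The key consequence is commutativity of the product on primitives: for $\mu\in\Q[S]$ and $\nu\in\Q[T]$, writing $\lambda=\nabla_{S,T}(\mu\otimes\nu)$ and $\lambda'=\nabla_{T,S}(\nu\otimes\mu)$,
\[
\langle\lambda,\lambda'\rangle
=\langle\mu\otimes\nu,\,\Delta_{S,T}(\lambda')\rangle
=\langle\mu\otimes\nu,\,\mu\otimes\nu\rangle
=1,
\]
forcing $\lambda=\lambda'$ since both are basis elements. Invoking associativity then shows that $\nabla_{S_1,\ldots,S_k}(\mu_1\otimes\cdots\otimes\mu_k)$ depends only on the unordered labeled partition $\{(S_i,\mu_i)\}$.

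We therefore have a well-defined map $\psi\colon\cS(\Q)[I]\to\P[I]$ sending a labeled partition to its iterated $\nabla$-product. Surjectivity is an easy induction on $|I|$: if $\lambda\in\P[I]\setminus\Q[I]$, then by definition of $\Q$ there exist nonempty $S,T$ and $\mu\in\P[S]$, $\nu\in\P[T]$ with $\lambda=\nabla_{S,T}(\mu\otimes\nu)$, and the inductive hypothesis factors $\mu$ and $\nu$ over $\Q$. Injectivity follows by applying $\Delta_{S_1,I\setminus S_1}$ for each block $S_1$ of a given factorization: the analysis above shows this recovers both $S_1$ (as a minimal nonempty subset of $I$ on which $\Delta$ is nonzero) and the label $\mu_1$, so the two labeled partitions must coincide. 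Compatibility of $\psi$ with the species structure, units, and counits is tautological; compatibility with products is a direct consequence of associativity of $\nabla$; compatibility with coproducts is exactly the vanishing/splitting formula derived above. Hence $\psi$ linearizes to an isomorphism $\kk\cS(\Q)\cong\h$ of connected Hopf monoids. Since $\kk\cS(\Q)$ is manifestly both commutative and cocommutative from its description preceding Fact~\ref{ssd-fact}, so is $\h$.

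The main obstacle in this approach is proving order-independence of the product on $\Q$-primitives, which is needed for $\psi$ to be well-defined. Without assuming $\h$ commutative \emph{a priori}, no symmetry of $\nabla$ is available; instead the self-dual pairing, together with the very restricted way $\Delta$ acts on a $\nabla$-product of $\Q$-primitives, is what forces two basis elements arising from visibly different expressions to coincide.
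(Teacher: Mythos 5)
Your proposal is correct. Note that the paper itself gives no proof of this theorem: it is stated as a paraphrase of \cite[Theorem 3.3.11]{Me}, so there is no in-paper argument to match, and what you have written is a self-contained reconstruction. Your route is sound: the identities $\Delta_{S,T}\circ\nabla_{S,T}=\id$ and $\Delta_{T,S}\circ\nabla_{S,T}(\mu\otimes\nu)=\nu\otimes\mu$ do follow from Hopf compatibility exactly as you describe (the first is \cite[Lemma 2.2.4]{Me}, which this paper also uses); the vanishing of $\Delta_{A,B}$ on $\Q=\cP(\P,\nabla)$ is immediate from the second SSD axiom; and the crux --- that the pairing making $\P$ orthonormal renders $\nabla_{S,T}$ and $\Delta_{S,T}$ adjoint --- is precisely the ``freely self-dual'' property that the paper extracts from \cite[Proposition 3.3.5]{Me}, so you are entitled to it. Your computation $\langle\lambda,\lambda'\rangle=\langle\mu\otimes\nu,\Delta_{S,T}(\lambda')\rangle=1$ forcing $\lambda=\lambda'$ is the right mechanism for obtaining commutativity without assuming it, and in fact it applies verbatim to arbitrary basis elements $\mu\in\P[S]$, $\nu\in\P[T]$, not only to elements of $\Q$, so you get commutativity of $\nabla$ on all of $\P$ directly rather than only on primitives; this would let you shorten the reduction to adjacent transpositions. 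The remaining steps (surjectivity of $\psi$ by induction using the definition of $\cP(\P,\nabla)$, injectivity by reading off the blocks as the minimal nonempty $A$ with $\Delta_{A,I\setminus A}(\lambda)\neq 0$, and the identification of the coproduct with that of Fact \ref{ssd-fact} via the vanishing/splitting formula) are all correctly argued, if tersely. No gaps.
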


Returning our attention   to linearized Hopf monoids, we observe the following statement which appears as \cite[Theorem F]{Me}.

\begin{theorem}\label{whenfsd-thm}
A connected Hopf monoid which is linearized in some basis is strongly self-dual in some (possibly different) basis if and only if it is finite-dimensional, commutative, and cocommutative. 
\end{theorem}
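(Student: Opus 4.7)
The forward implication is immediate from earlier material. If $\h$ is strongly self-dual in some basis $\P$, then Definition \ref{ssd-def} forces $\p$ to be finite-dimensional, and Theorem \ref{ssd-thm} identifies $\h$ with some $\kk\cS(\Q)$, which is both commutative and cocommutative. The linearization hypothesis plays no role in this direction.

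For the converse, assume $\h=(\p,\nabla,\Delta)$ is linearized in a basis $\P$ and is commutative, cocommutative, and finite-dimensional. The plan is to exhibit an isomorphism of connected Hopf monoids $\h\cong\kk\cS(\Q)$ for $\Q=\cP(\P,\nabla)$, after which Fact \ref{ssd-fact} delivers the SSD property in the basis pulled back from the standard basis of $\kk\cS(\Q)$. First I would apply Theorem \ref{order-thm} to obtain the partial order $\{\prec\}$ on $\P$; then, using commutativity, associativity, and the strict $\prec$-decrease under $\nabla_{S,T}\circ\Delta_{S,T}$ recorded in the remark after Theorem \ref{order-thm}, I would argue by induction on $(\P[I],\prec)$ that every $\lambda\in\P[I]$ factors as $\lambda=\nabla_{S_1,\dots,S_k}(\mu_1\otimes\cdots\otimes\mu_k)$ with $\mu_i\in\Q[S_i]$. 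The resulting labeled partition $X_\lambda=\{(S_i,\mu_i)\}\in\cS(\Q)[I]$ is uniquely determined from $\lambda$ because the linearization of $\Delta$ lets one read off each pair $(S_i,\mu_i)$ by applying $\Delta_{S_i,I\setminus S_i}$. This produces a canonical bijection $\phi:\P[I]\xrightarrow{\sim}\cS(\Q)[I]$ which automatically intertwines the two product maps, and $\Q$ is finite because $\p$ is finite-dimensional.

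The delicate step is to arrange the coproducts to match. The map $\phi$ fails to intertwine them because in $\h$ the coproduct $\Delta_{S,T}$ is nonzero on every element of $\P$, while in $\kk\cS(\Q)$ it vanishes whenever the labeled partition does not split across $(S,T)$. To fix this, I would change basis by the M\"obius transform
\[
\hat\lambda\;=\;\sum_{\nu\preceq\lambda}\mu_\P(\nu,\lambda)\,\nu,
\]
where $\mu_\P$ denotes the M\"obius function of the finite poset $(\P[I],\prec)$, and show that the assignment $\hat\lambda\mapsto\phi(\lambda)$ extends linearly to a Hopf monoid isomorphism $\h\xrightarrow{\sim}\kk\cS(\Q)$. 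The main obstacle is verifying the coproduct compatibility, namely that $\Delta_{S,T}(\hat\lambda)=\hat{\lambda'}\otimes\hat{\lambda''}$ when $\lambda=\nabla_{S,T}(\lambda'\otimes\lambda'')$ with $\lambda'\in\P[S]$ and $\lambda''\in\P[T]$, and vanishes otherwise. Heuristically, the M\"obius transform is designed to annihilate exactly the spurious coproduct terms produced by the $\nu\prec\lambda$ arising from $\nabla_{S,T}\circ\Delta_{S,T}$; making this precise should reduce to an inclusion-exclusion on the subinterval of $(\P[I],\prec)$ consisting of elements whose block-decomposition is adapted to the splitting $I=S\sqcup T$, where one uses that this subinterval has a greatest element strictly below $\lambda$ in the non-factoring case, so that the alternating M\"obius sum vanishes. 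Once this coproduct identity is established, the two SSD conditions of Definition \ref{ssd-def} for the basis $\hat\P=\{\hat\lambda\}$ follow immediately.
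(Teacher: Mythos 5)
The forward direction is fine, and for the converse you have landed on exactly the right construction: your $\hat\lambda$ is the $p_\bullet$ basis of Section \ref{bases-sect}, and the assertion you need is precisely that $\h$ is strongly self-dual with respect to it (last sentence of Theorem \ref{basis-thm}). Be aware, though, that this paper never proves Theorem \ref{whenfsd-thm}: it is quoted from \cite[Theorem F]{Me}, and the identities your argument hinges on are likewise only cited here, from \cite[Lemma 4.1.7]{Me} and \cite[Theorem 4.2.1]{Me}, and only under the stronger hypothesis of strong self-duality or strong linearization.

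The genuine gap is that the two identities carrying all the weight are left as heuristics, and they are the substance of the theorem. First, multiplicativity of the new basis: you never address why $\nabla_{S,T}(\hat\alpha\otimes\hat\beta)=\widehat{\alpha\cdot\beta}$. This requires that $(\alpha',\beta')\mapsto\alpha'\cdot\beta'$ be a poset isomorphism from $\{\alpha'\preceq\alpha\}\times\{\beta'\preceq\beta\}$ onto $\{\nu\preceq\alpha\cdot\beta\}$ --- property (A) stated after Theorem \ref{whenfsd-thm}, which the paper records only for SSD Hopf monoids. Second, the split-or-vanish coproduct formula: writing $\Delta_{S,T}(\hat\lambda)=\sum_{\nu\preceq\lambda}\Mobius(\nu,\lambda)\,\nu|_S\otimes\nu|_T$ and grouping by the value of $(\nu|_S,\nu|_T)$, you must show each fiber sum equals $\Mobius(\alpha,\lambda|_S)\cdot\Mobius(\beta,\lambda|_T)$ when $\lambda\in\im(\nabla_{S,T})$ and zero otherwise. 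Your appeal to ``a greatest element strictly below $\lambda$'' is property (B), again established in \cite{Me} only for SSD Hopf monoids; and even granting it, the fiber $\{\nu\preceq\lambda:\nu|_S=\alpha,\ \nu|_T=\beta\}$ is generally not an interval of $(\P[I],\prec)$ (already for $\bfPi$ with $|I|=3$), so the vanishing of the alternating sum does not follow from the defining recurrence of the M\"obius function alone. Establishing these order-theoretic facts for a merely linearized, commutative, cocommutative $\h$ is where the real work lies, and the proposal does not do it. (A smaller repairable issue: applying $\Delta_{S_i,I\setminus S_i}$ recovers $\mu_i$ only once the blocks $S_i$ are known; to see that the block decomposition itself is unique you need the Hopf-compatibility argument showing that any two factorizations of $\lambda$ into elements of $\Q$ mutually refine each other.)
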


 
 Conversely,
  one can classify
  which strongly self-dual  Hopf monoids are linearized in terms of the existence of a certain partial order.  A precise statement of such a classification appears as \cite[Theorem 4.1.9]{Me}.
   In detail,  by \cite[Lemma 4.1.7]{Me} the partial order $\{ \prec\}$  given in Theorem \ref{order-thm} has the following  properties:
  (A)   lower intervals are preserved by products
and
    (B) if  $I = S\sqcup T$ is a disjoint decomposition then each $\lambda \in \P[I]$ has a unique greatest lower bound in the image of $\nabla_{S,T}$.
A connected Hopf monoid $\h = (\p,\nabla,\Delta)$ which is strongly self-dual in some basis $\P$ then has 
a basis in which it is linearized if and only if there is a partial order $\{\prec\}$ on $\P$  with both of these properties.

\section{Structure of strongly linearized Hopf monoids}
\label{structu-sect}

In this section we begin to state new results, related to the construction and classification
of a certain type of linearized Hopf monoid (which we call \emph{strongly linearized}) that will give rise to many Hopf algebras of interest in Section \ref{app-sect}. 


%

\subsection{Linearized Hopf monoids from comonoids}
  \label{co2hopf}


From  a connected comonoid $(\q,\Delta)$ which is linearized in some basis $\Q$, we may   construct a  connected Hopf monoid $\cS(\q,\Delta)$ which is linearized  in the following way.
The underlying species structure of $\cS(\q,\Delta)$ will be the  connected species $\kk\cS(\Q)$ where $\cS(\Q)$ is the   species of $\Q$-labeled set partitions described after Definition \ref{ssd-def},
and the product $\nabla$ will be the same as in Fact \ref{ssd-fact}.
It remains to define the coproduct.

With some abuse of notation, we will denote the coproduct of $\cS(\q,\Delta)$ also by $\Delta$.
In words, each component of the coproduct of $\cS(\q,\Delta)$  divides a labeled set partition into two by splitting each of its blocks $(B,\lambda)$; blocks are split
by taking the intersection of $B$ with  $S_i$ and then labeling these intersections by the components of the corresponding image of $\lambda$ under $\Delta$. 
Explicitly, if $I = S_1\sqcup S_2$ is a disjoint decomposition of a finite set and 
$Z = \{ (B_j,\lambda_j)\}_{j\in J} \in \cS(\Q)[I]$
is a $\Q$-labeled set partition,
then for each index $j \in J$ there are unique elements 
\[\lambda_{1j} \in \Q[S_1\cap B_j]
\qquand
\lambda_{2j} \in \Q[S_2 \cap B_j]\]
such that
$
    \Delta_{S_1 \cap B_j, S_2 \cap B_j} (\lambda_j) = \lambda_{1j} \otimes \lambda_{2j}
$
. From $Z$ we thus obtain two $\Q$-labeled set partitions
\[ Z_i = \{ ( S_i \cap B_j, \lambda_{ij}) : \text{$j \in J$ such that $S_i \cap B_j \neq \varnothing$}\} \in \cS(\Q)[S_i]\qquad\text{for $i \in \{1,2\}$}
\]
and to define $\Delta_{S_1,S_2}$ on $\kk\cS(\Q)[I]$ we set  $ \Delta_{S_1,S_2}(Z) = Z_1\otimes Z_2$  and then extend by linearity.

\begin{remark}
There is a natural inclusion $\Q \subset \cS(\Q)$ given by identifying $\Q$ with the subspecies of $\Q$-labeled partitions with at most one block. This gives rise to an inclusion $\q \subset \kk\cS(\Q)$, and the action of the coproduct just defined on this subspecies may be identified with the original action of $\Delta$ on $\q$. In this way our new coproduct  is an extension of $\Delta$, which justifies in part our use of the same symbol $\Delta$ to denote it.
 Moreover, there is a sense in which this extension is the unique one which makes 
$\kk\cS(\Q)$ into a connected Hopf monoid with the product $\nabla$.
\end{remark}

Set $\cS(\q,\Delta) = (\kk\cS(\Q), \nabla,\Delta)$. Let $\q_+$ denote the maximal subspecies of $\q$ with $\q_+[\varnothing] =0$. Then $(\q_+,\Delta)$ is a \emph{positive comonoid} in the sense of \cite[Definition 8.42]{species}, and $\cS(\q,\Delta)$ is the \emph{free commutative Hopf monoid} on this positive comonoid, as described in \cite[Section 11.3.2]{species}. The following fact derives from the discussion in that section.

\begin{fact}\label{sl-fact} 
With $\nabla$ and $\Delta$ defined as above, $\cS(\q,\Delta)$ 
is a connected Hopf monoid which is 
commutative and linearized in the basis $\cS(\Q)$. Up to isomorphism,  $\cS(\q,\Delta)$ does not depend on the choice of basis $\Q$. Moreover, $\cS(\q,\Delta)$ is cocommutative if  and only if $(\q,\Delta)$ is cocommutative.
\end{fact}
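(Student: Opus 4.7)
The plan is to verify each axiom directly from the definitions, reducing everything blockwise to the corresponding axiom for the comonoid $(\q,\Delta)$, and then invoke the universal characterization of \cite[Section 11.3.2]{species} for the basis-independence claim.

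First I would dispense with the monoid structure. The product $\nabla$ is literally disjoint union of labeled set partitions, which is associative and commutative; the unit axiom holds because $\cS(\Q)[\varnothing]$ has exactly one element (the empty labeled partition) which is neutral under $\sqcup$. This simultaneously gives commutativity of $\cS(\q,\Delta)$ and shows that $\nabla$ is linearized in the basis $\cS(\Q)$; the coproduct is linearized in $\cS(\Q)$ by direct inspection of the formula for $\Delta_{S_1,S_2}(Z)$.

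Next I would verify the comonoid axioms. Naturality of $\Delta_{S_1,S_2}$ under bijections follows from naturality of $\Delta$ on $\q$, since a bijection of the ambient set merely relabels blocks. The counit axiom $\Delta_{\varnothing,I}(Z) = \varnothing \otimes Z$ reduces to $\Delta_{\varnothing,B_j}(\lambda_j) = 1_\q \otimes \lambda_j$ for each block, which holds because $(\q,\Delta)$ is a connected comonoid. Coassociativity is blockwise: for a decomposition $I = S_1 \sqcup S_2 \sqcup S_3$ the two orders of splitting apply $\Delta_{S_1 \cap B_j, S_2 \cap B_j, S_3 \cap B_j}$ to each label $\lambda_j$ in the two possible ways, and these agree because $\q$ is coassociative.

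The main step is Hopf compatibility for the four-fold decomposition $I = R \sqcup R' = S \sqcup S'$ with $A = R\cap S$, $B = R\cap S'$, $A' = R' \cap S$, $B' = R'\cap S'$. Given $X \in \cS(\Q)[R]$ and $Y \in \cS(\Q)[R']$, every block of $X \sqcup Y$ lies entirely in $R$ or in $R'$, so splitting $X \sqcup Y$ by $(S,S')$ refines each block of $X$ by $(A,B)$ and each block of $Y$ by $(A',B')$. Collecting the $S$-side and $S'$-side pieces shows
\[
\Delta_{S,S'}\circ \nabla_{R,R'}(X \otimes Y) \;=\; \bigl(X_A \sqcup Y_{A'}\bigr) \otimes \bigl(X_B \sqcup Y_{B'}\bigr),
\]
where $\Delta_{A,B}(X) = X_A \otimes X_B$ and $\Delta_{A',B'}(Y) = Y_{A'} \otimes Y_{B'}$; this is exactly the composition along the other side of diagram \eqref{hopf-diagram}. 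The key point$-$and the main obstacle in the plan$-$is making the combinatorial bookkeeping precise: because the splitting is block-by-block and the blocks of $X \sqcup Y$ are inherited from $X$ or $Y$, no interaction occurs between labels of $X$ and labels of $Y$, and Hopf compatibility reduces to the trivial identity for a single isolated block.

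Finally, for the last two assertions: any isomorphism of bases $\Q \to \Q'$ of $\q$ induces a natural bijection $\cS(\Q) \to \cS(\Q')$ commuting with $\nabla$ and $\Delta$, proving basis-independence; equivalently, $\cS(\q,\Delta)$ is characterized (without reference to $\Q$) as the free commutative connected Hopf monoid on the positive comonoid $\q_+$ as in \cite[Section 11.3.2]{species}. For cocommutativity, if $\q$ is cocommutative then swapping the two factors of $\Delta_{S_1\cap B_j, S_2\cap B_j}(\lambda_j)$ blockwise yields cocommutativity of the extended $\Delta$; conversely, the map sending $\lambda \in \Q[I]$ to the single-block labeled partition $\{(I,\lambda)\}$ identifies $\q$ with a subcomonoid of $\kk\cS(\Q)$, so cocommutativity of $\cS(\q,\Delta)$ descends to $(\q,\Delta)$.
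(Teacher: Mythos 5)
Your proof is correct, but it takes a different route from the paper, which offers no direct verification at all: the paper simply observes that $\cS(\q,\Delta)$ is the free commutative Hopf monoid on the positive comonoid $\q_+$ and cites the general construction in \cite[Section 11.3.2]{species}, so all of the axioms are inherited from that machinery. You instead verify everything by hand, and the verification is sound: the monoid axioms and commutativity are immediate from disjoint union, coassociativity and the counit axiom reduce blockwise to the corresponding axioms for $(\q,\Delta)$, and your Hopf-compatibility computation correctly exploits the fact that each block of $X \sqcup Y$ lies entirely in $R$ or in $R'$, so that splitting along $(S,S')$ never mixes labels of $X$ with labels of $Y$. (Implicit but essential in your coproduct verification is that $\Delta_{S_1\cap B_j,\,S_2\cap B_j}(\lambda_j)$ is a pure tensor of basis elements, which is exactly the linearization hypothesis on $(\q,\Delta)$; worth stating.) The one place where your argument is loose is basis-independence: two linearizing bases $\Q$ and $\Q'$ of the same comonoid need not be related by any natural bijection of set species compatible with $\Delta$, so the first half of that sentence does not stand on its own. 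The second half does: identifying $\kk\cS(\Q)$ with the substitution product $\bfE \circ \q_+$ (whose Hopf monoid structure is defined without reference to a basis) is the correct argument, and it is exactly the paper's. The trade-off is the usual one: your direct verification is self-contained and makes the combinatorics of the coproduct transparent, while the paper's citation buys brevity and situates the construction inside a universal property that is then reused (e.g.\ in the proof of Theorem \ref{fl-thm}).
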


\begin{remark}
We have given the somewhat simplified definition of $\cS(\q,\Delta)$ in the case that $(\q,\Delta)$ is linearized. One can extend this definition to construct a commutative connected Hopf monoid from any connected comonoid; see
\cite[Section 11.3.2]{species}.
\end{remark}
%
%
 
Assume $(\q,\Delta)$ is cocommutative and linearized in some finite basis $\Q$, and let $\{\prec\}$ be the partial order on $\cS(\Q)$ defined from the product and coproduct of $\cS(\q,\Delta)$ by Theorem \ref{order-thm}. 
The order $\{\prec\}$ has the following explicit definition: if $X$ and $Y$ are two $\Q$-labeled partitions of the same finite set then 
$ X \preceq Y $
if and only if whenever $(B,\lambda) \in Y$ there are blocks $(A_i,\lambda_i) \in X$ such that 
\[ B = A_1\sqcup\dots \sqcup A_k\qquand \lambda = \Delta_{A_1,\dots,A_k}(\lambda_1\otimes \cdots\otimes\lambda_k).\]
In particular, this holds  only if  $\sh(X) \leq \sh(Y)$ in the ordering of set partitions by refinement, where  $\sh(X) = \{ B : (B,\lambda) \in X\}$. Moreover, it follows as a straightforward exercise   that
 the map  $ X \mapsto \sh(X)$
 gives an isomorphism of partially ordered sets
\[\{ X \in \cS(\Q)[I] : X \preceq Y\} \xrightarrow{\sim}  \{ \Lambda \in \Pi[I] : \Lambda \leq \sh(Y)\}.\]
Theorem \ref{posetisom-thm} below will generalize this observation.

\begin{remark}
It is interesting to compare the partial order $\{\prec\}$ with the one described in \cite[Section 8.7.6]{species}, from the work of M\'endez and Yang \cite{Mendez0,Mendez}.
The latter  order is defined on the natural basis of the \emph{Cauchy product} of the exponential species $\bfE$ with a linearized connected monoid; the partial order here, by contrast, is defined on the natural basis of the \emph{substitution product} of $\bfE$ with a linearized positive comonoid. (See \cite[Definition 8.5]{species} for an explanation of these products.)
\end{remark}

\subsection{Strongly linearized Hopf monoids}

The Hopf monoid $\cS(\q,\Delta)$ exhibits the following strong form of linearization which,
conversely, will in some sense characterize all  Hopf monoids arising from this construction.
\begin{definition}\label{sl-def} A connected Hopf monoid $(\p,\nabla,\Delta)$ is 
 \emph{strongly linearized} in some basis $\P$ for $\p$ if it is linearized in the basis $\P$ and if  $\Delta_{S,T}$ restricts to a map $\Q[S\sqcup T] \to \Q[S]\times \Q[T] $ for all disjoint nonempty finite sets $S$, $T$,
 where $\Q = \cP(\P,\nabla)$.
\end{definition}

The Hopf monoids in Examples \ref{E-ex}, \ref{Pi-ex}, \ref{L-ex}, and  \ref{S-ex} all have this property in their natural bases.
For the rest of this section,  let $\h = (\p,\nabla,\Delta)$ denote  a fixed connected Hopf monoid which is linearized in some finite basis $\P$.
Also set $\Q = \cP(\P,\nabla)$ and $\q = \kk\Q$. Define $\q^\circ$ to be the  subspecies of $\p$ with 
\[ \q^\circ[\varnothing] = \p[\varnothing]\qquand \q^\circ [S] = \q[S]\text{ for nonempty sets $S$.}\]
We consider $\q^\circ$ to be connected with the same unit and counit as $\p$.
Finally, when $\h$ is commutative and cocommutative, we write $\{\prec\}$ for the partial order   on $\P$ defined in Theorem \ref{order-thm}.
In this setup, we have the following variation of Theorem \ref{ssd-thm}. 
\begin{theorem}\label{fl-thm}
Suppose $\h$ is commutative and strongly linearized with respect to $\P$. Restricting  $\Delta$ then gives $\q^\circ$ the structure of a linearized connected comonoid, and there is a unique isomorphism of connected Hopf monoids $\cS(\q^\circ,\Delta) \xrightarrow{\sim} \h$ whose restriction makes the diagram
\[
\begin{diagram}
\cS(\Q) && \rTo^{\sim} && \P \\
& \luTo &&\ruTo\\
&& \Q
 \end{diagram}
\]
 commute, where the diagonal arrows are the natural inclusions.
 \end{theorem}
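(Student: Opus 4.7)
The plan is to exhibit the required isomorphism as the linear extension of the map
\[\varphi_I \colon X = \{(B_i,\lambda_i)\}_{i=1}^k \;\mapsto\; \nabla_{B_1,\ldots,B_k}(\lambda_1 \otimes \cdots \otimes \lambda_k)\]
for $X \in \cS(\Q)[I]$, and then to verify each clause of the theorem in turn. Commutativity of $\h$ makes $\varphi_I$ independent of the chosen ordering of the blocks of $X$, and the triangle in the theorem commutes by construction since a single-block partition $\{(I,\lambda)\}$ maps to $\nabla_I(\lambda) = \lambda$. Uniqueness is then automatic: any Hopf monoid morphism making the triangle commute must restrict to the inclusion on $\Q$ and commute with $\nabla$, and a general $X \in \cS(\Q)[I]$ is the product in $\cS(\q^\circ,\Delta)$ of its one-block constituents.

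First I verify that restricting $\Delta$ to $\q^\circ$ does yield a linearized connected comonoid. The strong linearization hypothesis guarantees that $\Delta_{S,T}$ sends $\Q[S \sqcup T]$ into $\Q[S] \times \Q[T]$ for nonempty disjoint $S,T$, while the counital axioms of $\h$ handle the boundary cases (using $\q^\circ[\varnothing] = \p[\varnothing]$). Coassociativity is inherited from $\h$, and naturality is automatic. The coproduct on $\cS(\q^\circ,\Delta)$ is then the one described in Section \ref{co2hopf}.

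Next I check that $\varphi$ is a morphism of connected Hopf monoids. Compatibility with products follows immediately from associativity of $\nabla$. Compatibility with coproducts is the computational heart of the argument: fix $I = S_1 \sqcup S_2$ and $X = \{(B_i,\lambda_i)\}$, set $B_i^j = B_i \cap S_j$, and write $\Delta_{B_i^1,B_i^2}(\lambda_i) = \lambda_i^1 \otimes \lambda_i^2$. Iterating the Hopf compatibility diagram \eqref{hopf-diagram} and using commutativity to rearrange tensor factors gives
\[\Delta_{S_1,S_2}(\varphi_I(X)) = \nabla(\lambda_1^1,\ldots,\lambda_k^1) \otimes \nabla(\lambda_1^2,\ldots,\lambda_k^2).\]
By strong linearization each $\lambda_i^j$ lies in $\Q$ (or is the unit when the corresponding intersection is empty), so this expression is precisely $\varphi_{S_1}(X_1) \otimes \varphi_{S_2}(X_2)$, where $X_1,X_2$ are the restrictions of $X$ defined before Fact \ref{sl-fact}.

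Finally, I show $\varphi$ is bijective on each basis $\P[I]$. Surjectivity follows by induction on $|I|$: every element of $\P[I] \setminus \Q[I]$ is, by the definition of $\Q = \cP(\P,\nabla)$, in the image of some $\nabla_{S,T}$ applied to $\P[S] \times \P[T]$ with $S,T$ nonempty, so iteration expresses it as $\nabla$ of $\Q$-elements. Injectivity is the main obstacle. Suppose $\varphi(X) = \varphi(Y) = p$ with $X = \{(B_i,\lambda_i)\}$ and $Y = \{(C_j,\mu_j)\}$. Applying $\Delta_{B_1, I \setminus B_1}$ and invoking the compatibility formula just established, the left tensor factor evaluates to $\lambda_1$ on the $X$-side and to a product $\nabla_j(\mu_j^L)$ of strongly-linearized restrictions $\mu_j^L \in \Q[C_j \cap B_1]$ on the $Y$-side. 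Since $\lambda_1 \in \Q[B_1]$ is not in the image of any nontrivial product, only one $C_j$ can meet $B_1$, forcing $B_1 \subseteq C_j$; a symmetric argument with the roles of $X$ and $Y$ reversed gives $B_1 = C_j$ and $\lambda_1 = \mu_j$. Induction on the complement completes the proof.
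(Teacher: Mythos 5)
Your proof is correct, and it takes a noticeably more self-contained route than the paper's. The paper obtains the unique isomorphism of connected \emph{monoids} $f:\cS(\q^\circ,\Delta)\to\h$ by citing \cite[Lemma 4.1.3]{Me} and \cite[Theorem 3.3.11]{Me}, so the only thing it proves in situ is compatibility with coproducts, which it does by induction on $|S|+|S'|$: a multi-block $Z$ is written as $\nabla_{R,R'}(\cdots)$ and the Hopf compatibility diagram \eqref{hopf-diagram} of \emph{both} Hopf monoids is used to reduce to smaller sets. You instead exhibit the map explicitly as $X\mapsto\nabla_{B_1,\dots,B_k}(\lambda_1\otimes\cdots\otimes\lambda_k)$, get coproduct compatibility from a direct iterated-Hopf-compatibility computation on $\h$ alone (matched against the blockwise definition of the coproduct on $\cS(\q^\circ,\Delta)$ via strong linearization), and then prove bijectivity on bases from scratch. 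The surjectivity induction and the injectivity argument --- splitting off one block with $\Delta_{B_1,I\setminus B_1}$ and using that elements of $\Q=\cP(\P,\nabla)$ are not nontrivial products to force the blocks and labels of $X$ and $Y$ to agree --- are exactly the content the paper delegates to the prequel, and your versions are sound (the key points, that products of basis elements are basis elements so tensor factors can be compared, and that two or more blocks of $Y$ meeting $B_1$ would exhibit $\lambda_1$ as a nontrivial product, both hold). What your approach buys is a proof readable without \cite{Me}; what the paper's buys is brevity and reuse of the structure theory of strongly self-dual Hopf monoids. The one place you should add a line is naturality of $\varphi$ (that it is a morphism of species commuting with the maps $\P[\sigma]$), which is routine since $\nabla$ is natural but is part of what ``isomorphism of connected Hopf monoids'' asserts.
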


\begin{proof}
By \cite[Lemma 4.1.3]{Me} and \cite[Theorem 3.3.11]{Me}, there exists a unique isomorphism of connected \emph{monoids} $f : \cS(\q^\circ,\Delta) \to \h$
whose restriction makes the given diagram commute.
It remains to check that $f$ is also a morphism of connected comonoids, i.e., that 
\be\label{com} (f_{S}\otimes f_{S'})\circ \Delta_{S,S'} = \Delta_{S,S'}\circ f_I\ee
for any disjoint decomposition $I = S\sqcup S'$, where we write $\Delta$ for the coproducts of both $\cS(\q^\circ,\Delta)$ and $\h$.
This identity holds automatically if $S$ or $S'$ is empty since $f$ is an isomorphism of connected species. We proceed by induction on $|S| + |S'|$, so assume $S$ and $S'$ are both nonempty and
  that \eqref{com} holds if either set is replaced by a set with fewer elements.

Since $\h$ is strongly linearized and since $f$ commutes with the inclusions $\Q \to \cS(\Q)$ and $\Q \to\P$, it follows from the definition of the coproduct on $\cS(\q^\circ,\Delta)$ that both sides of \eqref{com}  agree when applied to a $\Q$-labeled partition with at most one block. 
To prove \eqref{com} it remains only to show that both sides of \eqref{com} agree when applied to $Z \in \cS(\Q)[I]$ with more than one block. 
Writing $\nabla$ for the product of $\cS(\q^\circ,\Delta$), we note that such a partition belongs to the image of $\nabla_{R,R'} $ for some disjoint decomposition $I = R\sqcup R'$ with $R$ and $R'$ both nonempty. The identity \eqref{com} will therefore follow if we show that for such a decomposition
\be\label{com2} (f_{S}\otimes f_{S'})\circ \Delta_{S,S'}\circ \nabla_{R,R'} = \Delta_{S,S'}\circ f_I\circ \nabla_{R,R'}.\ee
For this, define 
$A = R \cap S 
$ and
$B = R\cap S'
$ and
$A' = R'\cap S
$ and
$B' = R' \cap S'
$.
The Hopf compatibility of the product and coproduct of $\cS(\q^\circ,\Delta)$ then implies that the left side of \eqref{com2} is equal to
\[ 
 (f_{S}\otimes f_{S'})\circ (\nabla_{A,A'} \otimes \nabla_{B,B'}) \circ \tau \circ (\Delta_{A,B}\otimes \Delta_{A',B'}).
 \]
 where $\tau$ is an appropriate twisting bijection (see the diagram \eqref{hopf-diagram}).
 Since $f$ is a morphism of connected monoids, this expression  is equal to
\[
 (\nabla_{A,A'} \otimes \nabla_{B,B'}) \circ   (f_{A}\otimes f_{A'}\otimes f_{B}\otimes f_{B'})\circ \tau \circ (\Delta_{A,B}\otimes \Delta_{A',B'})
 \]
which is equal in turn, after commuting the two middle maps, to 
\[
 (\nabla_{A,A'} \otimes \nabla_{B,B'}) \circ \tau'
 \circ  ((f_{A}\otimes f_B)\circ \Delta_{A,B})\otimes ((f_{A'}\otimes f_{B'})\circ \Delta_{A',B'})
 \]
 where $\tau'$ is now another appropriately chosen twisting bijection.
 By hypothesis, since $R$ and $R'$ are both nonempty, we have $(f_{A}\otimes f_B)\circ \Delta_{A,B} = \Delta_{A,B}\circ f_R$ and 
 $(f_{A'}\otimes f_{B'})\circ \Delta_{A',B'} = \Delta_{A',B'} \circ f_{R'}$. Substituting these identities and again invoking Hopf compatibility shows that the previous expression is equal to
\[  (\nabla_{A,A'} \otimes \nabla_{B,B'}) \circ \tau'
 \circ  ( \Delta_{A,B}\otimes \Delta_{A',B'}) \circ (f_R\otimes f_{R'}) =    \Delta_{S,S'}\circ \nabla_{R,R'}\circ   (f_R\otimes f_{R'}) .\]
  The right expression here coincides with the right expression in \eqref{com2} since $f$ commutes with products. Combining these observations, we conclude by induction that \eqref{com2} holds, and hence more generally that \eqref{com} holds.
\end{proof}

The next theorem shows that when  $\{\prec\} \subset \P\times \P$ is defined, we may distinguish when the Hopf monoid $\h$ is strongly linearized, as opposed to being merely linearized, by a property of this associated partial order.

\begin{theorem}\label{posetisom-thm} 
Assume $\h$ is commutative and cocommutative. Then $\h$ is strongly linearized with respect to $\P$ if and only if 
whenever  $I$ is  a finite set and $\lambda \in \Q[I]$, the sets 
\[\label{thesets}\{ \lambda' \in \P[I] : \lambda' \preceq \lambda\}
\qquand \Pi[I]\] have the same number of elements. 
In this case, furthermore,
there exists an order-preserving bijection between these   sets, with $\Pi[I]$  ordered by refinement.

\end{theorem}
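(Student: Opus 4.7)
The plan is to introduce, for each $\lambda \in \Q[I]$, the map
\[\phi_\lambda : \Pi[I] \to \P[I], \qquad \Lambda = \{S_1, \dots, S_k\} \mapsto \nabla_{S_1, \dots, S_k} \circ \Delta_{S_1, \dots, S_k}(\lambda),\]
using the compositions \eqref{nablam}. The commutativity and cocommutativity of $\h$ ensure that the right-hand side is independent of the chosen ordering of the blocks of $\Lambda$, since the maps $\nabla_{\dots,S_i,S_{i+1},\dots}$ and $\Delta_{\dots,S_i,S_{i+1},\dots}$ are each invariant under composing with the appropriate twist. The explicit description of $\prec$ in the remark following Theorem \ref{order-thm} shows that the image of $\phi_\lambda$ is exactly the lower set $\{\lambda' \in \P[I] : \lambda' \preceq \lambda\}$, which is finite since $\P$ is.

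For the forward implication, suppose $\h$ is strongly linearized. Theorem \ref{fl-thm} furnishes an isomorphism of connected Hopf monoids $\cS(\q^\circ,\Delta) \xrightarrow{\sim} \h$ identifying $\lambda \in \Q[I]$ with the single-block labeled partition $Y = \{(I,\lambda)\} \in \cS(\Q)[I]$. Because $\{\prec\}$ is determined by the Hopf-monoid structure, this isomorphism matches the two lower sets, and the remark preceding Definition \ref{sl-def} identifies the lower set of $Y$ via the order-preserving bijection
\[\{X \in \cS(\Q)[I] : X \preceq Y\} \xrightarrow{\sim} \{\Lambda \in \Pi[I] : \Lambda \leq \{I\}\} = \Pi[I], \qquad X \mapsto \sh(X),\]
since $\sh(Y) = \{I\}$ is the maximum of $\Pi[I]$. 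This delivers both the cardinality equality and the order-preserving bijection asserted by the theorem.

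For the converse, assume the cardinality condition holds, so $\phi_\lambda$ is a bijection for each $\lambda \in \Q[I]$. Fix such a $\lambda$ and a nontrivial decomposition $I = S \sqcup T$; since $\h$ is linearized, $\Delta_{S,T}(\lambda) = \mu \otimes \nu$ for unique $\mu \in \P[S]$ and $\nu \in \P[T]$. Assume for contradiction that $\mu \notin \Q[S]$, so that $\mu = \nabla_{S_1,S_2}(\mu_1 \otimes \mu_2)$ for some nontrivial $S = S_1 \sqcup S_2$ with $\mu_1 \in \P[S_1]$, $\mu_2 \in \P[S_2]$. Using coassociativity together with the identity $\Delta_{S_1,S_2} \circ \nabla_{S_1,S_2}(\mu_1 \otimes \mu_2) = \mu_1 \otimes \mu_2$ (a consequence of the Hopf compatibility \eqref{hopf-diagram} specialized to $R = S_1,\ R' = S_2,\ S = S_1,\ S' = S_2$, combined with the connected axioms), one obtains $\Delta_{S_1,S_2,T}(\lambda) = \mu_1 \otimes \mu_2 \otimes \nu$, and hence
\[\phi_\lambda(\{S_1, S_2, T\}) = \nabla_{S_1,S_2,T}(\mu_1 \otimes \mu_2 \otimes \nu) = \nabla_{S,T}(\mu \otimes \nu) = \phi_\lambda(\{S, T\}),\]
contradicting injectivity. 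Therefore $\mu \in \Q[S]$, and the symmetric argument gives $\nu \in \Q[T]$, establishing strong linearization; the order-preserving bijection then follows from the forward direction already proved.

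The main subtlety is verifying the key equality $\phi_\lambda(\{S_1, S_2, T\}) = \phi_\lambda(\{S, T\})$ driving the contradiction in the converse; this reduces to checking $\Delta \circ \nabla = \id$ on tensors of basis elements decomposed along the \emph{same} partition, which is a short but non-trivial unwinding of \eqref{hopf-diagram} together with the unit and counit axioms. The rest of the argument is a clean transport through the isomorphism provided by Theorem \ref{fl-thm} and the explicit description of lower sets in $\cS(\q^\circ,\Delta)$.
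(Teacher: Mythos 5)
Your proposal is correct and follows essentially the same route as the paper: the same map $\Lambda \mapsto \nabla_{B_1,\dots,B_k}\circ\Delta_{B_1,\dots,B_k}(\lambda)$ surjecting onto the lower set, the same use of $\Delta_{R,R'}\circ\nabla_{R,R'}=\id$ to collapse two distinct partitions when some coproduct component escapes $\Q$, and the same identification of the lower set with $\Pi[I]$ via shapes in the strongly linearized case (the paper argues $\sh(\lambda_X)=X$ directly rather than transporting through the remark after Theorem \ref{order-thm}, but both rest on the isomorphism of Theorem \ref{fl-thm}). No gaps.
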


The following notation will be of use in the proof of this result. Recall from \eqref{shape} the definition of the shape of a labeled set partition.
When $\h$ is  commutative and strongly linearized in $\P$ and 
$I$ is a finite set, we  define the \emph{shape} of $\lambda \in \P[I]$ to be the set partition 
\[ \sh(\lambda) = \sh(X) \in \Pi[I]\] where $X \in \cS(\Q)[I]$ is the preimage of $\lambda$ under
the
isomorphism $\cS(\q^\circ,\Delta) \xrightarrow{\sim} \h$   in Theorem \ref{fl-thm}.

\begin{proof}
Fix a finite set $I$ and an element $\lambda \in \Q[I]$, and let $L= \{ \lambda ' \in \P[I] : \lambda' \preceq \lambda\}$. 
Given a set partition $X = \{ B_1,\dots,B_k\} \in \Pi[I]$ with $k$ blocks,
let 
\[ \lambda_X = \nabla_{B_1,\dots,B_k} \circ \Delta_{B_1,\dots,B_k}(\lambda).\]
This element is well-defined, independent of the ordering of the blocks of $X$, since $\h$ is commutative and cocommutative. It follows from the remarks after Theorem \ref{order-thm} that 
$ L = \{ \lambda_X : X \in \Pi[I]\}$, and so
we have the inequality $|L| \leq |\Pi[I]|$.

Suppose $\h$ is not strongly linearized in $\P$, so that for some choice of $\lambda$ there exists a disjoint decomposition $I=S\sqcup S'$ with $S$ and $S'$ nonempty such that $\Delta_{S,S'}(\lambda) = (\lambda',\lambda'') \notin \Q[S]\times \Q[S']$.
Without loss of generality assume $\lambda' \in \P[S]-\Q[S]$.
By \cite[Lemma 4.1.3]{Me} and \cite[Theorem 3.3.11]{Me}
as in the last theorem, it follows that $\lambda'  = \nabla_{R,R'}(\alpha\otimes \alpha')$ for some disjoint decomposition $S = R\sqcup R'$ with $R$ and $R'$ nonempty and some elements $(\alpha,\alpha') \in \P[R]\times \P[R']$.
As the composition $\Delta_{R,R'}\circ \nabla_{R,R'}$ is always the identity map \cite[Lemma 2.2.4]{Me},
it follows that 
\[ \Delta_{R,R',S'}(\lambda) = (\Delta_{R,R'}\otimes \id)\circ \Delta_{S,S'}(\lambda) = \alpha\otimes \alpha'\otimes \lambda''\]
and hence
\[ \nabla_{R,R',S'}\circ \Delta_{R,R',S'}(\lambda) = \nabla_{S,S'} \circ (\nabla_{R,R'}\otimes \id)( \alpha\otimes \alpha'\otimes \lambda'') = \nabla_{S,S'}(\lambda'\otimes \lambda'').\]
Consequently, we have $\lambda_X = \lambda_Y$ for the distinct set partitions $X = \{ S,S'\}$ and $Y = \{R,R',S'\}$, and so the inequality $|L| \leq |\Pi[I]|$ must be strict.

Conversely, if $\h$ is strongly linearized in $\P$, then for any pairwise disjoint decomposition $I=B_1\sqcup \dots \sqcup B_k$ into nonempty subsets,  $\Delta_{B_1,\dots,B_k}$ restricts to a map 
$\Q[I] \to \Q[B_1]\times \dots \times \Q[B_k]$
and it follows that
  $\sh(\lambda_X) = X$ for all $X \in \Pi[I]$. In this case, therefore, the map   $\lambda' \mapsto \sh(\lambda')$ defines   a bijection $L\to \Pi[I] $.
By \cite[Proposition 4.1.8]{Me} this map is also order-preserving, which proves the last sentence in the theorem.
\end{proof}

%
%
%

 \subsection{Basis constructions}\label{bases-sect}

Everywhere in this section $\h = (\p,\nabla,\Delta)$ denotes  a fixed connected Hopf monoid which is 
 finite-dimensional, commutative, cocommutative, and strongly linearized in some basis $\P$ for $\p$.
We write $\{\prec\}$ for the partial order   on $\P$ defined via Theorem \ref{order-thm}.
The goal of this section is to introduce four distinguished bases for this Hopf monoid.
Examples in the next section will illustrate a sense in which these bases 
generalize the definition of the classical bases of the Hopf algebra of symmetric functions.

For any finite set $I$, we write $\Mobius(\cdot,\cdot)$ for the  M\"obius function on $\P[I]$ ordered by $ \preceq$. Recall that this is the unique function $\P[I] \times \P[I] \to \ZZ$
such that 
\[ \Mobius(\lambda',\lambda'') = 0\text{ if }\lambda'\not\preceq \lambda''\qquand 
\sum_{\substack{\lambda'' \in \P[I] \\ \lambda ' \preceq \lambda'' \preceq \lambda} } \Mobius(\lambda',\lambda'') = \begin{cases} 1 &\text{if }\lambda=\lambda' \\ 0&\text{otherwise}.\end{cases}\]
The identity on the right implies $\Mobius(\lambda,\lambda) = 1$
and provides a recurrence for computing any value of the M\"obius function.
Fix 
an element $\lambda \in \P[I]$. We first define
\[
p_\lambda = \sum_{\lambda' \preceq \lambda} \Mobius(\lambda',\lambda)\cdot \lambda'
\qquand
m_\lambda = \sum_{ \lambda' \succeq \lambda} \Mobius(\lambda,\lambda') \cdot p_{\lambda'}
\]
where both sums are over  $\lambda' \in \P[I]$.
By Theorem \ref{posetisom-thm} 
 the poset  $\{ \lambda' \in \P[I] : \lambda' \preceq \lambda\}$ has a unique minimal element. We denote this element by $\emptyset_\lambda$
and set $\Mobius(\lambda)  = \Mobius(\emptyset_\lambda,\lambda)$.
Next   define
\[
e_\lambda = \sum_{\lambda' \preceq \lambda} \Mobius(\lambda')\cdot p_{\lambda'}
\qquand
h_\lambda = \sum_{ \lambda' \preceq \lambda} |\Mobius(\lambda')| \cdot p_{\lambda'}
\]
To refer to the collections of these elements,  for finite sets $I$  we 
set $x_\bullet[I] = \{ x_\lambda : \lambda \in \P[I]\}$ when $x$ is one of the letters $m$, $p$, $e$, $h$. For each of these letters, $x_\bullet$ then forms a set subspecies of $\p$, by our definition of a partial order.

\begin{lemma}\label{basis-lem1}
If $I$ is a finite set and $\lambda \in \P[I]$ then 
$\lambda = \sum_{\lambda' \preceq\lambda} p_{\lambda'}$
and
$p_\lambda = \sum_{\lambda' \succeq \lambda} m_{\lambda'}$.
\end{lemma}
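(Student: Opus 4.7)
The plan is to prove both identities by direct Möbius inversion on the finite poset $(\P[I], \preceq)$; since $\p$ is finite-dimensional, $\P[I]$ is finite and the standard Möbius-function recurrences are available, so neither identity requires any input from the Hopf-monoid structure beyond the existence and definition of the order $\prec$ that was used to write down $p_\lambda$ and $m_\lambda$.

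For the first identity, I would substitute the definition of $p_{\lambda'}$ and interchange the order of summation:
\[
\sum_{\lambda' \preceq \lambda} p_{\lambda'} \;=\; \sum_{\lambda' \preceq \lambda}\ \sum_{\lambda'' \preceq \lambda'} \Mobius(\lambda'',\lambda')\cdot \lambda'' \;=\; \sum_{\lambda'' \preceq \lambda} \lambda'' \cdot \Bigl( \sum_{\lambda'' \preceq \lambda' \preceq \lambda}\Mobius(\lambda'',\lambda')\Bigr).
\]
By the defining recurrence of $\Mobius$ stated in the paper, the bracketed inner sum equals $1$ if $\lambda'' = \lambda$ and $0$ otherwise, so the whole expression collapses to $\lambda$, which is the first claim.

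For the second identity, I would do the analogous upper-set computation:
\[
\sum_{\lambda' \succeq \lambda} m_{\lambda'} \;=\; \sum_{\lambda' \succeq \lambda}\ \sum_{\lambda'' \succeq \lambda'} \Mobius(\lambda',\lambda'')\cdot p_{\lambda''} \;=\; \sum_{\lambda'' \succeq \lambda} p_{\lambda''}\cdot \Bigl( \sum_{\lambda \preceq \lambda' \preceq \lambda''}\Mobius(\lambda',\lambda'')\Bigr).
\]
Here I would invoke the dual form of the Möbius recurrence, namely that $\sum_{a\preceq c\preceq b}\Mobius(c,b)=\delta_{a=b}$; this is a standard consequence of the defining recurrence stated in the paper (one proves it once by induction on the length of the interval $[a,b]$, or by noting that the incidence algebra element $\Mobius$ is simultaneously a left and right inverse of the zeta function). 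With this, the bracketed sum above is $\delta_{\lambda=\lambda''}$ and the right-hand side collapses to $p_\lambda$.

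The only step that warrants any comment is this appeal to the dual Möbius recurrence, since the paper only writes down one of the two equivalent forms; I would either cite it as standard or include a one-line induction verifying it inside the finite poset $\P[I]$. Everything else is bookkeeping, and no Hopf-monoid axioms are used in this lemma — it is a purely poset-theoretic statement about the bases defined via $\Mobius$.
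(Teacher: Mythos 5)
Your proof is correct and follows exactly the same route as the paper, whose entire argument is the one-line instruction to substitute the definitions of $p_{\lambda'}$ and $m_{\lambda'}$ and apply the identities defining the M\"obius function. Your extra care in flagging that the second identity needs the dual recurrence $\sum_{a\preceq c\preceq b}\Mobius(c,b)=\delta_{a=b}$ (rather than the form the paper writes down) is a legitimate and standard point, handled exactly as you propose, and does not constitute a different approach.
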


\begin{proof}
Substitute into the right hand expressions our original formulas for $p_{\lambda'}$ and $m_{\lambda'}$. Then apply the identities defining the M\"obius function.
\end{proof}

Recall, from the remarks preceding the proof of Theorem \ref{posetisom-thm}, the definition of the set partition $\sh(\lambda) \in \Pi[I]$ which gives the shape of a basis element $\lambda \in \P[I]$. 

\begin{lemma}\label{basis-lem2}
If $I$ is a finite set and $\lambda \in \P[I]$ then $\Mobius(\lambda) =\ds \prod_{B \in \sh(\lambda)} (-1)^{|B|-1} \cdot  (|B|-1)!$.
\end{lemma}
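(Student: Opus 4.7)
The plan is to deduce the formula from the classical Möbius function of the partition lattice by pulling back through the poset isomorphism furnished by Theorem \ref{posetisom-thm}. Since $\Mobius(\lambda) = \Mobius(\emptyset_\lambda, \lambda)$ depends only on the isomorphism type of the interval $\{ \lambda' \in \P[I] : \lambda' \preceq \lambda\}$ as a poset, and this interval is order-isomorphic (via $\lambda' \mapsto \sh(\lambda')$) to the interval $[\hat 0, \sh(\lambda)]$ in $\Pi[I]$ ordered by refinement, we have
\[
\Mobius(\emptyset_\lambda,\lambda) \;=\; \mu_{\Pi[I]}(\hat 0, \sh(\lambda)),
\]
where $\hat 0$ denotes the partition of $I$ into singletons.

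Next I would invoke the standard fact that the interval $[\hat 0, \sh(\lambda)]$ in the partition lattice $\Pi[I]$ decomposes as a direct product of posets
\[
[\hat 0, \sh(\lambda)] \;\cong\; \prod_{B \in \sh(\lambda)} \Pi[B],
\]
since a partition refining $\sh(\lambda)$ is determined independently by how it partitions each block $B \in \sh(\lambda)$. Because the Möbius function of a product of finite posets is the product of the Möbius functions of the factors, this gives
\[
\mu_{\Pi[I]}(\hat 0, \sh(\lambda)) \;=\; \prod_{B \in \sh(\lambda)} \mu_{\Pi[B]}(\hat 0_B, \{B\}).
\]

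Finally, I would cite the classical result of Schützenberger that for any finite set $B$, the Möbius function of the partition lattice $\Pi[B]$ between its minimum and maximum element equals $(-1)^{|B|-1} (|B|-1)!$. Substituting this yields the desired formula
\[
\Mobius(\lambda) \;=\; \prod_{B \in \sh(\lambda)} (-1)^{|B|-1}(|B|-1)!.
\]

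None of the steps should present a serious obstacle: the first step is essentially bookkeeping from the order-isomorphism established in Theorem \ref{posetisom-thm}, while the remaining steps are standard lattice-theoretic results. The only subtle point is to verify that the isomorphism from Theorem \ref{posetisom-thm} genuinely carries $\emptyset_\lambda$ to $\hat 0 \in \Pi[I]$, which follows because $\hat 0$ is the unique minimum of $[\hat 0, \sh(\lambda)]$ and the map is an order isomorphism of intervals.
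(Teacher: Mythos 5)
Your overall strategy is the same as the paper's: transport the computation to the partition lattice via a shape-preserving poset isomorphism, then invoke the product decomposition $[\hat 0,\sh(\lambda)] \cong \prod_{B\in\sh(\lambda)}\Pi[B]$, multiplicativity of the M\"obius function, and the classical value $(-1)^{|B|-1}(|B|-1)!$. However, your first step has a genuine gap. Theorem \ref{posetisom-thm} does \emph{not} assert an order isomorphism $\{\lambda'\preceq\lambda\}\cong[\hat 0,\sh(\lambda)]$ for arbitrary $\lambda\in\P[I]$; it is stated only for $\lambda\in\Q[I]$ where $\Q=\cP(\P,\nabla)$, i.e.\ for elements not in the image of any nontrivial product. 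For such $\lambda$ one has $\sh(\lambda)=\{I\}$ and the lower interval is isomorphic to all of $\Pi[I]$, which is exactly what the theorem says --- but it says nothing about elements whose shape has more than one block. So "essentially bookkeeping from the order-isomorphism established in Theorem \ref{posetisom-thm}" overstates what that theorem gives you.

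To close the gap you need an extra idea, which is precisely the content of the paper's argument: if $\lambda\notin\Q[I]$, use Theorem \ref{fl-thm} to write $\lambda=\alpha\cdot\beta$ with $\sh(\lambda)=\sh(\alpha)\sqcup\sh(\beta)$ for a decomposition $I=S\sqcup T$ into nonempty sets, invoke the fact that products preserve lower intervals (\cite[Theorem 4.1.9]{Me}) to get $[\emptyset_\lambda,\lambda]\cong[\emptyset_\alpha,\alpha]\times[\emptyset_\beta,\beta]$, match this against $[\hat 0,\sh(\lambda)]\cong[\hat 0,\sh(\alpha)]\times[\hat 0,\sh(\beta)]$, and conclude by induction on $|I|$. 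Once that isomorphism is in hand, the rest of your argument (product of posets, multiplicativity of $\Mobius$, the classical formula for $\mu(\Pi[B])$) goes through exactly as you wrote it, and your closing observation that the minimum must map to $\hat 0$ is correct but automatic.
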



\begin{proof}
We claim that the interval $[\emptyset_\lambda,\lambda]$ in $\P[I]$ ordered by $\prec$ is isomorphic to the interval $[\hat 0, \sh(\lambda)]$
in the lattice of set partitions $\Pi[I]$ ordered by refinement. (Here we write $\hat 0$ for the unique minimum in $\Pi[I]$, namely, the partition into blocks all of size one.)
This holds if $\lambda \in \cP(\P,\nabla)[I]$ by Theorem \ref{posetisom-thm}.
If $\lambda \notin \cP(\P,\nabla)[I]$ then 
\[\lambda = \alpha\cdot \beta
\qquand \sh(\lambda) =\sh(\alpha)\sqcup \sh(\beta)\] by Theorem \ref{fl-thm} for some $(\alpha,\beta) \in \P[S]\times \P[T]$ where $I=S\sqcup T$ is a disjoint decomposition into nonempty subsets.
Since products preserve lower intervals in the $\P[I]$ (see \cite[Theorem 4.1.9]{Me}), we have 
$ [\emptyset_\lambda,\lambda] \cong [\emptyset_\alpha,\alpha]\times [\emptyset_\beta,\beta].$
On the other hand, in the lattice of set partitions we have
$
[\hat 0,\sh(\lambda)] \cong  [\hat0,\sh(\alpha)]\times [\hat0,\sh(\beta)]
$ 
since more generally $[\hat 0,\sh(\lambda)] \cong \prod_{B \in \sh(\lambda)} \Pi[B] $, and so the desired isomorphism  follows by induction.
By our claim, we lose no generality in assuming 
 $\P = \Pi$, and in this special case it is well-known that the value of $\Mobius(\lambda)$ is equal to the given formula; see \cite[Section 3]{Sagan1}.
\end{proof}

We introduce the following shorthand for the product and coproduct of $\h$ on basis elements. Given disjoint finite sets $S$, $T$ and elements $\alpha \in \P[S]$, $\beta\in \P[T]$, and $\lambda \in \P[S\sqcup T]$ we write 
\[ \alpha\cdot \beta \in \P[S\sqcup T] \qquand \lambda|_S \in \P[S]\qquand \lambda|_T \in \P[T]\]
for the elements such that $\nabla_{S,T}(\alpha,\beta) = \alpha\cdot \beta$ and $\Delta_{S,T}(\lambda) = \lambda|_S\otimes \lambda|_T$. Observe that the notation $\lambda|_S$ is well-defined since the coproduct is cocommutative.

\begin{theorem}\label{basis-thm}
Each of $m_\bullet$, $p_\bullet$, $e_\bullet$, and $h_\bullet$ is a basis for $\p$.
On these bases, for disjoint finite sets $S$ and $T$ and indices $\alpha \in \P[S]$ and $\beta \in \P[T]$ and $\lambda \in \P[S\sqcup T]$, the product and coproduct of $\h$   have the following formulas:
\ben
 
\item[(a)] When $x \in \{p,e,h\}$ and $y \in \{m\}$, it holds 
that
\[\nabla_{S,T}(x_\alpha\otimes x_\beta) = x_{\alpha\cdot \beta} \qquand \nabla_{S,T}(y_\alpha\otimes y_\beta) =\ds \sum_{\substack{\gamma \in \P[S\sqcup T] \\ \gamma|_S=\alpha\text{ and }\gamma|_T= \beta}} y_\gamma.\]

\item[(b)] When $x \in \{e,h\}$ and $y \in \{m,p\}$, it holds 
that
\[
\Delta_{S,T}(x_\lambda) = x_{\lambda|_S} \otimes x_{\lambda|_T}
\qquand
\Delta_{S,T}(y_\lambda)  
=
\begin{cases}   y_{\lambda|_S} \otimes y_{\lambda|_T} &\text{if $\lambda \in \im(\nabla_{S,T})$} \\ 0&\text{otherwise}.\end{cases}\]
\een
It follows, in particular, that
 the natural changes of basis $p_\bullet \leftrightarrow h_\bullet$, $p_\bullet \leftrightarrow m_\bullet$, and $e_\bullet \leftrightarrow h_\bullet$ are respectively isomorphisms of connected monoids, comonoids, and Hopf monoids. Moreover, $\h$ is strongly linearized with respect to the bases $e_\bullet$ and $h_\bullet$, and strongly self-dual with respect to $p_\bullet$.
\end{theorem}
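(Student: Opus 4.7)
My plan is to prove the $p_\bullet$ formulas first and derive the others by M\"obius inversion. Each of the four defining formulas writes the corresponding basis element triangularly in $\P$ with diagonal coefficient $\pm 1$, so each of $m_\bullet, p_\bullet, e_\bullet, h_\bullet$ is automatically a basis of $\p$. The product identity $p_\alpha\cdot p_\beta = p_{\alpha\cdot\beta}$ should come out of the proof of Lemma \ref{basis-lem2}, which shows that $(\alpha',\beta')\mapsto \alpha'\cdot\beta'$ is an isomorphism of posets $[\emptyset_\alpha,\alpha]\times[\emptyset_\beta,\beta] \xrightarrow{\sim} [\emptyset_{\alpha\cdot\beta},\alpha\cdot\beta]$; expanding both sides and applying the multiplicativity of the M\"obius function on product posets then forces equality. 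When $\lambda = \lambda|_S\cdot\lambda|_T$ lies in $\im(\nabla_{S,T})$ the coproduct identity $\Delta_{S,T}(p_\lambda) = p_{\lambda|_S}\otimes p_{\lambda|_T}$ then falls out of this product formula combined with the Hopf compatibility axiom.

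The main obstacle is to verify that $\Delta_{S,T}(p_\lambda) = 0$ when $\lambda\notin\im(\nabla_{S,T})$. I would approach this by using the isomorphism $\h\xrightarrow{\sim}\cS(\q^\circ,\Delta)$ from Theorem \ref{fl-thm} to view $\lambda$ as a labeled set partition, and then applying Hopf compatibility and the multiplicativity of $p$ to reduce to the case in which $\lambda$ corresponds to a single labeled block $(B^*,\mu^*)$ with $B^*\cap S$ and $B^*\cap T$ both nonempty. Setting $\tau = \{B^*\cap S,\, B^*\cap T\}\in\Pi[B^*]$ and expanding $p_\lambda$ as $\sum_\pi \mu_{\Pi[B^*]}(\pi,\hat{1})\lambda_\pi$ over refinements $\pi$ of $B^*$, coassociativity of $\Delta$ on $\q^\circ$ shows that $\Delta_{B^*\cap S,\, B^*\cap T}(\lambda_\pi)$ depends only on the meet $\pi\wedge\tau$. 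Collecting terms by this meet, the claim reduces to the identity $\sum_{\pi:\,\pi\wedge\tau=\sigma}\mu_{\Pi[B^*]}(\pi,\hat{1}) = 0$ for each $\sigma\leq\tau$, which is an instance of Weisner's theorem applied inside the sublattice $[\sigma,\hat{1}]$ of $\Pi[B^*]$; note that $\tau$ is not the top element here precisely because $B^*$ is a crossing block.

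With the $p$ formulas established, the remaining identities should reduce to routine M\"obius inversion. For $e_\bullet$ and $h_\bullet$ the product formula follows from the poset isomorphism above together with the multiplicativity $\Mobius(\alpha'\cdot\beta') = \Mobius(\alpha')\Mobius(\beta')$ provided by Lemma \ref{basis-lem2}, and the coproduct formula follows from the coproduct formula for $p$ together with the fact that every $\lambda'\preceq\lambda$ lying in $\im(\nabla_{S,T})$ is uniquely of the form $\alpha'\cdot\beta'$ with $\alpha'\preceq\lambda|_S$ and $\beta'\preceq\lambda|_T$. For $m_\bullet$, M\"obius-inverting the $p$-coproduct formula via Lemma \ref{basis-lem1} immediately gives the $m$-coproduct (since no coarsening of $\lambda$ lies in $\im(\nabla_{S,T})$ when $\lambda$ itself does not), while the $m$-product formula reduces after expansion to the basic M\"obius identity $\sum_{\alpha'\in[\alpha,\gamma|_S]}\Mobius(\alpha,\alpha') = \delta_{\alpha,\gamma|_S}$. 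The final sentence of the theorem then follows by directly unpacking Definitions \ref{sl-def} and \ref{ssd-def} from the formulas just obtained.
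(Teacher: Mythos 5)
Your proposal is correct, but it takes a genuinely more self-contained route than the paper. The paper disposes of the $m_\bullet$ and $p_\bullet$ formulas in one line by citing \cite[Theorem 4.2.1]{Me} from the prequel, and then obtains the $e_\bullet$ and $h_\bullet$ formulas by showing that the rescalings $p_\lambda \mapsto \Mobius(\lambda)\cdot p_\lambda$ and $p_\lambda \mapsto |\Mobius(\lambda)|\cdot p_\lambda$ are endomorphisms of the Hopf monoid $\h$ (using the multiplicativity $\Mobius(\alpha\cdot\beta) = \Mobius(\alpha)\Mobius(\beta)$ from Lemma \ref{basis-lem2}), so that the product and coproduct act on $e_\bullet$ and $h_\bullet$ exactly as on $\P$. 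You instead reprove the $p_\bullet$ formulas from scratch: the one genuinely nontrivial point, the vanishing of $\Delta_{S,T}(p_\lambda)$ for $\lambda \notin \im(\nabla_{S,T})$, you reduce via Hopf compatibility to a single crossing block and settle with the dual form of Weisner's theorem applied to $[\sigma,\hat 1]$ in the partition lattice. This is a clean argument (the hypothesis $\tau \neq \hat 1$ is exactly the crossing condition) and is essentially the content hidden inside the external citation, so your version buys a proof that does not lean on the prequel. Your derivation of the $e_\bullet$ and $h_\bullet$ formulas by direct M\"obius inversion uses the same multiplicativity of $\Mobius$ as the paper's endomorphism trick, just packaged computationally rather than structurally. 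Two places where you are terser than the argument warrants: the $m$-coproduct in the case $\lambda \in \im(\nabla_{S,T})$ needs the upper-interval factorization $\{\lambda' \succeq \lambda\} \cap \im(\nabla_{S,T}) \cong \{\alpha' \succeq \lambda|_S\} \times \{\beta' \succeq \lambda|_T\}$, which follows from lower-interval preservation together with the injectivity of $\nabla_{S,T}$ on $\P$ but is not literally immediate from Lemma \ref{basis-lem1}; and the $m$-product computation needs that $\alpha'\cdot\beta' \preceq \gamma$ if and only if $\alpha' \preceq \gamma|_S$ and $\beta' \preceq \gamma|_T$, i.e.\ property (B) of the order recalled at the end of Section \ref{ssd-sect}. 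Neither is a gap, only a suppressed lemma.
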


\begin{proof}
It is clear from Lemma \ref{basis-lem1} that the subspecies $m_\bullet$ and $p_\bullet$ are bases of $\kk\P$. In turn, it follows that $e_\bullet$ and $h_\bullet$ are bases since by Lemma \ref{basis-lem2} it never occurs that $\Mobius(\lambda)=0$.
The formulas for the product and coproduct on the $m_\bullet$ and $p_\bullet$ bases follow from \cite[Theorem 4.2.1]{Me}. (Note, however, that the notation of that result is dual to what we employ here. In \cite[Section 4.2]{Me}, what we denote by $\lambda$ is called $m_\lambda$ and what we denote by  $\lambda$ is called $q_\lambda$.)
To check the remaining formulas, it suffices to show that 
the systems of $\kk$-linear maps $\p[I] \to \p[I]$ with 
\be\label{formulas}p_\lambda \mapsto \Mobius(\lambda)\cdot p_\lambda \qquand p_\lambda \mapsto |\Mobius(\lambda)|\cdot p_\lambda 
\qquad\text{for }\lambda \in \P[I]\ee
both give endomorphisms  of the connected Hopf monoid $\h$. For, these endomorphisms are clearly automorphisms since $\Mobius(\lambda)$ is never zero,
and by Lemma \ref{basis-lem2} they define the natural changes of basis $\P \to e_\bullet$ and $\P \to h_\bullet$.
Hence our claim would imply that the product and coproduct of $\h$ act on $e_\bullet$ and $h_\bullet$ by the same formulas as on the original basis $\P$, which is what the theorem asserts.

It follows from our definition of partial orders for set species that both formulas in \eqref{formulas} extend at least to morphisms $\p \to \p$ of connected species.  Moreover, by Lemma \ref{basis-lem2} we have $\Mobius(\alpha\cdot \beta) = \Mobius(\alpha) \cdot \Mobius(\beta)$ whenever $(\alpha,\beta) \in \P[S]\times\P[T]$ for disjoint finite sets $S$, $T$.
Combining this fact with the formulas for the product and coproduct of $\h$ in the  $p_\bullet$ basis
 shows that the morphisms of connected species \eqref{formulas} commute with the product and coproduct of $\h$,
 and so are morphisms of Hopf monoids.
  \end{proof}

Define the \emph{sign} of a set partition $X \in \Pi[I]$ to be the number $\sgn(X) = (-1)^{n-k}$ where $n = |I|$ and $k$ is the number of blocks of $X$. This is the sign of any permutation of $I$ whose cycles are the blocks of $X$. Note that $\sgn(X\sqcup Y) = \sgn(X) \cdot \sgn(Y)$ for partitions $X$, $Y$ of disjoint finite sets.

\begin{corollary} The natural change of basis $e_\bullet \to h_\bullet$ is an involution of $\h$.
\end{corollary}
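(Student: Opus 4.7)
The approach is to express the map $\alpha : e_\bullet \to h_\bullet$ in the $p_\bullet$ basis and observe that it acts diagonally by signs, hence squares to the identity. The key input is Lemma \ref{basis-lem2}, which lets us compare $\Mobius(\lambda)$ and $|\Mobius(\lambda)|$.

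First, from the proof of Theorem \ref{basis-thm} we already know that the changes of basis $\phi_e : \P \to e_\bullet$ (sending $\lambda \mapsto e_\lambda$) and $\phi_h : \P \to h_\bullet$ (sending $\lambda \mapsto h_\lambda$) are the isomorphisms of connected Hopf monoids given in the $p_\bullet$ basis by the formulas $\phi_e(p_\lambda) = \Mobius(\lambda)\cdot p_\lambda$ and $\phi_h(p_\lambda) = |\Mobius(\lambda)|\cdot p_\lambda$. The natural change of basis in question is the composition $\alpha = \phi_h\circ \phi_e^{-1}$; since $\phi_e$ and $\phi_h$ are Hopf monoid isomorphisms, so is $\alpha$. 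It remains only to show $\alpha^2 = \id$.

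Next, I would compute $\alpha$ explicitly in the $p_\bullet$ basis. Since $\Mobius(\lambda)$ is a nonzero integer for every $\lambda \in \P[I]$, the map $\phi_e^{-1}$ acts by $p_\lambda\mapsto \Mobius(\lambda)^{-1}\cdot p_\lambda$, so
\[ \alpha(p_\lambda) \ =\ \phi_h(\Mobius(\lambda)^{-1}\cdot p_\lambda) \ =\ \frac{|\Mobius(\lambda)|}{\Mobius(\lambda)}\cdot p_\lambda.\]
By Lemma \ref{basis-lem2}, the sign of $\Mobius(\lambda)$ is $\prod_{B\in \sh(\lambda)}(-1)^{|B|-1} = (-1)^{|I|-|\sh(\lambda)|} = \sgn(\sh(\lambda))$. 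Therefore $\alpha(p_\lambda) = \sgn(\sh(\lambda))\cdot p_\lambda$, and since $\sgn(\sh(\lambda)) \in \{\pm 1\}$ we obtain $\alpha^2(p_\lambda) = p_\lambda$ for every $\lambda \in \P[I]$, i.e.\ $\alpha^2 = \id$.

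There is no real obstacle here: the content is just the sign identity $\Mobius(\lambda) = \sgn(\sh(\lambda))\cdot |\Mobius(\lambda)|$ coming from Lemma \ref{basis-lem2}, combined with the diagonal description of the two change-of-basis maps in the $p_\bullet$ basis established in the proof of Theorem \ref{basis-thm}. (As a sanity check, one can verify the involution property directly in the $h_\bullet\to e_\bullet$ direction: applying $\alpha$ to $h_\lambda = \sum_{\mu\preceq\lambda}|\Mobius(\mu)|\cdot p_\mu$ yields $\sum_{\mu\preceq\lambda}|\Mobius(\mu)|\sgn(\sh(\mu))\cdot p_\mu = \sum_{\mu\preceq\lambda}\Mobius(\mu)\cdot p_\mu = e_\lambda$.)
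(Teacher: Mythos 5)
Your proof is correct and follows essentially the same route as the paper: both identify the change of basis $e_\bullet \to h_\bullet$ with the diagonal map $p_\lambda \mapsto \sgn(\sh(\lambda))\cdot p_\lambda$ in the $p_\bullet$ basis via Lemma \ref{basis-lem2} and the diagonal descriptions from the proof of Theorem \ref{basis-thm}, and then note that multiplication by signs squares to the identity. The sanity check at the end is a nice touch but not needed.
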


\begin{proof}
It follows by the preceding theorem that $\h$ has an endomorphism mapping $p_\lambda \mapsto \sgn(\lambda) \cdot p_\lambda$ for all $\lambda \in \P[I]$. This endomorphism is evidently an involution, and by Lemma \ref{basis-lem2} it coincides with the change of basis $e_\bullet \to h_\bullet$.
\end{proof}

\section{Applications to Hopf algebras}
\label{app-sect}

Putting together the results of the previous section sets up a general correspondence
\be\label{gencor-eq} \left\{ \begin{array}{c} \text{Linearized cocommutative} \\ \text{connected comonoids}  \earr\right\}\leadsto
 \left\{ \begin{array}{c} \text{Connected Hopf monoids with} \\ \text{several partially ordered bases} \earr\right\}.
 \ee
In detail, we start with a  connected comonoid 
$ \c =(\kk\Q,\Delta) $
where $\Q$ is a finite set species in which the coproduct $\Delta$ is linearized. 
Fact \ref{sl-fact} shows how to attach to this data a  connected Hopf monoid
$\h = \cS(\kk\Q,\Delta).$
When $\Delta$ is cocommutative, this Hopf monoid is strongly self-dual and strongly linearized in the basis of  $\Q$-labeled set partitions $\cS(\Q)$; Theorem \ref{order-thm} then produces a certain partial order $\{\prec\}$ on $\cS(\Q)$,
from which 
 we construct four  bases $m_\bullet$, $p_\bullet$, $e_\bullet$, and $h_\bullet$ for $\h$.

In the next subsection we review the definition of two functors
 $\cK$ and $\overline\cK$, each of which  applied
to $\h$  gives 
a  graded connected Hopf algebra with four distinguished bases.
  The first Hopf algebra $\cK(\h)$ is 
cocommutative but usually noncommutative, and maps surjectively onto the second, 
which is commutative, cocommutative, and self-dual.
We then  discuss several examples illustrating the constructions which arise from simple choices of $\c$.

\subsection{Fock functors}
\label{fock-sect}

For background on Hopf algebras, see \cite{Cartier,ReinerNotes}.
Let $\GrVec$ denote the category of $\NN$-graded $\kk$-vector spaces with graded linear maps as morphisms, and let $\Hopf(\GrVec)$ denote the category of graded Hopf algebras over $\kk$. 
Recall that a graded Hopf algebra consists of a graded vector space \[\H = \bigoplus_{n\geq 0} \H_n \in \GrVec\] with four graded linear maps $\eta : \kk \to \H$ (the unit), $\varepsilon : \H \to \kk$ (the counit), $\nabla : \H\otimes \H \to \H$ (the product), and $\Delta : \H \to \H\otimes \H$ (the coproduct). These maps must be such that $\H$ is a (co)algebra with respect to the (co)unit and (co)product, and moreover the coproduct and counit must be algebra homomorphisms.
Morphisms of graded Hopf algebras are morphisms of graded vector spaces commuting with units, counits, products, and coproducts.
 A graded Hopf algebra $\H$ is \emph{connected} if   $\H_0 \cong \kk$.
 
\begin{example}\label{hopfalg-ex1}
Let $\kk\langle x_1,\dots,x_n \rangle$ be the algebra of polynomials  over $\kk$ in $n$ noncommuting variables, with the usual grading by degree.
We view $\kk\langle x_1,\dots, x_n \rangle$ as a coalgebra whose counit has the formula $f(x) \mapsto f(0)$ and whose coproduct 
is the linear map $\Delta$ with 
$\Delta(x_i) = 1\otimes x_i + x_i \otimes 1$ and $ \Delta(fg) = \Delta(f)\Delta(g).$
These structures make $\kk\langle x_1,\dots x_n \rangle$ into a graded connected Hopf algebra.
\end{example}

\begin{example}\label{hopfalg-ex2}
Let $\kk[ x_1,\dots,x_n ]$ be the algebra of polynomials  over $\kk$ in $n$ commuting variables, with the usual grading by degree.
This is a graded connected Hopf algebra with unit, counit, product, and coproduct defined exactly as for $\kk\langle x_1,\dots, x_n \rangle$, only now with our variables commuting.
\end{example}

%

Write $\Hopf(\Sp^\circ)$ for the category of connected Hopf monoids in species.
We review here the definition of two
functors
\[ \cK : \Hopf(\Sp^\circ) \to \Hopf(\GrVec)
\qquand
\overline \cK : \Hopf(\Sp^\circ) \to \Hopf(\GrVec)
\]
which, following \cite{species}, we call  \emph{Fock functors}. 
These constructions were described originally by Stover \cite{stover} and studied in more detail by Patras, Reutenauer, and Schocker \cite{291,292,293}, but we follow the notation and exposition of \cite[Chapter 15]{species}.
We remark that \cite[Chapter 15]{species} discusses two additional Fock functors $\cK^\vee$ and $\overline \cK^\vee$, which we omit here. Our applications only involve Hopf monoids which are  finite-dimensional and self-dual, and on such Hopf monoids the values of $\cK^\vee$ and $\overline \cK^\vee$ are isomorphic to (the duals of) those of $\cK$ and $\overline \cK$; see \cite[Section 15.4.4]{species}.

We view $\cK$ and $\overline \cK$ first as functors from vector species to graded vector spaces.
Recall that we abbreviate by writing $\p[n]$ for $\p[\{1,\dots,n\}]$.
Given a vector species $\p$,  define the graded vector spaces 
\[ \cK(\p) = \bigoplus_{n\geq 0} \p[n]\qquand \overline\cK(\p) = \bigoplus_{n\geq 0} \p[n]_{S_n}\]
where $\p[n]_{S_n}$ is the vector space of $S_n$-coinvariants of $\p[n]$, that is, the quotient of $\p[n]$
by the 
subspace generated by all elements of the form $x-\p[\sigma](x)$ for permutations $\sigma \in S_n$. There is thus a  surjective homomorphism 
\be\label{surj}\cK(\p) \to \overline\cK(\p)\ee
induced by the quotient maps $\p[n] \to \p[n]_{S_n}$.
If $f : \p \to \q$ is a morphism of vector species then 
\[\cK(f) : \cK(\p) \to \cK(\q)
\qquand
\overline\cK(f) : \overline\cK(\p) \to \overline \cK(\q)
\]
are the graded linear maps whose $n$th components are respectively  $f_{[n]} : \p[n] \to \q[n]$ and the map $\p[n]_{S_n} \to \q[n]_{S_n}$ induced by $f_{[n]}$ on the quotient spaces.

\begin{remark}
If $\P$ is a basis for $\p$ 
and
$\P[n]_{S_n}$ denotes a set of representative of the distinct $S_n$-orbits in $\P[n]$,
then 
the disjoint unions $
\coprod_{n\geq 0} \P[n]
$ and $
\coprod_{n\geq 0} \P[n]_{S_n}$ are  bases for $\cK(\p)$ and $\overline \cK(\p)$.
\end{remark}

Now suppose $\h=(\p,\nabla,\Delta)$ is a connected Hopf monoid. We give $\cK(\h)$ the structure of a graded connected Hopf algebra in the following way. 
As a graded connected vector space we set $\cK(\h) = \cK(\p)$, with the  unit and counit maps  
\[ \kk \xrightarrow{\sim} \p[\varnothing] \to \cK(\p)\qquand \cK(\p) \to \p[\varnothing] \xrightarrow{\sim} \kk.\]
Here the  arrows on the far left and far right are the unit and counit of $\p$ and the  remaining arrows are the canonical inclusion and projection maps.
On $\p[m]\otimes \p[n]$ the product of $\cK(\h)$ is 
the map 
\be\label{k-prod} \p[m] \otimes \p[n] \xrightarrow{\ \ \p[\id]\otimes \p[\canon]\ \ } \p[m] \otimes \p[m+1,m+n] \xrightarrow{\ \ \nabla_{[m],[m+1,m+n]}\ \ } \p[m+n]\ee
where $\canon$ denotes the  order-preserving bijection  $[n] \to [m+1, m+n].$
To define the coproduct, fix a disjoint decomposition $[n] = S\sqcup T$ with $|S| = j$ and $|T| = k$ and define 
$\st_{S,T}
=\p[\sigma]\otimes \p[\sigma']$ where  $\sigma$ and $\sigma'$ are  the unique order-preserving bijections $S \to [j]$ and $T \to [k]$. On $\p[n]$ the coproduct of $\cK(\h)$ is then given by the sum of the maps
\be\label{k-coproduct}  
 \p[n] \xrightarrow{\ \ \Delta_{S,T}\ \ } \p[S] \otimes \p[T] \xrightarrow{\ \ \st_{S,T}\ \ } \p[j] \otimes \p[k]
\ee
 over the $2^n$ different ordered disjoint decompositions  $[n] = S\sqcup T$.
With respect to this unit, counit, product, and coproduct, $\cK(\h)$ is a graded connected Hopf algebra \cite[Theorem 15.12]{species}.  


One makes $\overline\cK(\h)$ into a Hopf algebra in a similar way. 
Again $\overline\cK(\h) = \overline\cK(\p)$ as a graded connected vector space, with the unit and counit maps defined exactly as for $\cK(\h)$.
The compositions \eqref{k-prod} and \eqref{k-coproduct} each descend to well-defined maps
\[\p[m]_{S_m} \otimes \p[n]_{S_n} \to \p[m+n]_{S_{m+n}}
\qquand
\p[n]_{S_n} \to \p[j]_{S_j} \otimes \p[k]_{S_k}.
\] 
The product and coproduct on $\overline\cK(\h)$ are then defined exactly as for $\cK(\h)$, except with these maps in place of \eqref{k-prod} and \eqref{k-coproduct}. 
With respect to this unit, counit, product, and coproduct, $\overline\cK(\h)$ is likewise a graded connected Hopf algebra \cite[Theorem 15.12]{species}.
Moreover, the map \eqref{surj} is a morphism of graded connected Hopf algebras \cite[Theorem 15.13]{species}

\begin{example}
If $C$ is an $n$-element set and $\bfE_C$ is the connected Hopf monoid  in Example \ref{E-ex},
then $ \cK(\bfE_C) \cong \kk\langle x_1,\dots,x_n\rangle$ and $ \overline\cK(\bfE_C) \cong \kk[x_1,\dots,x_n]$
and the morphism $\cK(\bfE_C) \to \overline\cK(\bfE_C)$ may be identified with the map between polynomial algebras letting the variables commute.
\end{example}

We briefly have need of the following new notation.
If $\h$ is a connected Hopf monoid then we write $\h^*$ for its \emph{dual} (see \cite[Section 8.6]{species}); likewise if $\H$ is a graded Hopf algebra then we wrote $\H^*$ for its \emph{restricted dual}, in the sense of \cite[Section 1.6]{ReinerNotes}.
If $\p$ and $\p'$ are vector species then the \emph{Hadamard product} $\p\times \p'$ is the vector species with $(\p\times \p')[S] = \p[S]\otimes \p'[S]$ for all finite sets $S$; see \cite[Definition 8.5]{species}.
If $\h= (\p,\nabla,\Delta)$ and $\h'=(\p',\nabla',\Delta')$ are connected Hopf monoids, then there is a natural connected Hopf monoid denoted $\h\times \h'$ whose underlying species is $\p\times \p'$; see \cite[Corollary 8.59]{species}.

To conclude this section, we observe some notable properties of the functors $\cK$ and $\overline \cK$. Let $\h$ be a connected Hopf monoid; the following statements then hold:
\begin{itemize}
\item $\cK(\h) \cong \overline \cK(\bfL\times \h)$ by \cite[Proposition 15.9]{species}.

\item If $\h$ is finite-dimensional then $\overline\cK(\h^*) \cong \overline\cK(\h)^*$ by \cite[Corollary 15.25]{species}.
\item  If $\h$ is cocommutative then so are $\cK(\h)$ and $\overline\cK(\h)$ by \cite[Corollary 15.27]{species}.
\item  If $\h$ is commutative then so is $\overline\cK(\h)$ by \cite[Corollary 15.27]{species}.
\end{itemize}

\subsection{Symmetric functions}\label{sym-sect}

Recall  the graded algebra  $\NCSym$ 
 of symmetric functions in  noncommuting  variables from the introduction.
This is naturally a coalgebra  with respect to the coproduct which we write informally as  $f(x_1,x_2,\dots,) \mapsto f(x_1,y_1,x_2,y_2,\dots)$;
we interpret this as a map $\NCSym \to \NCSym\otimes \NCSym$ by viewing
\[ f(x_1,y_1,x_2,y_2,\dots) \in \NCSym(x) \otimes \NCSym(y) \cong \NCSym \otimes \NCSym\]
 where $x=\{x_1,x_2,\dots\}$ and  $y=\{y_1,y_2,\dots\}$  are   sets of noncommuting variables which commute with each other.
These definitions make $\NCSym$ into a graded connected Hopf algebra which is noncommutative and cocommutative; see \cite[Theorem 4.1]{preprint}.
The more familiar Hopf algebra  of symmetric functions  $\Sym$ is defined in exactly the same way as $\NCSym$, except that $\Sym$ is located as a subalgebra
of $\kk[[x]]$ rather than $\kk\langle\langle x\rangle \rangle$, where $\kk[[x]]=\kk[[x_1,x_2,\dots]]$
denotes the algebra of formal powers series in a countably infinite set of commuting variables.
For the precise definition of $\Sym$ and its the Hopf algebra structure (which is commutative, cocommutative, and self-dual) see \cite[Section 2]{ReinerNotes} or \cite{Stan2,zel}.

As mentioned in the introduction,
both $\NCSym$ and $\Sym$ have four  ``classical bases.'' 
The classical bases of $\NCSym$, first described in \cite{Sagan1}, are indexed by  partitions $\Lambda$ of the sets $[n]$ for $n\geq 0$. One defines
 \[ m_\Lambda = \sum_{(i_1,\dots,i_n)}  x_{i_1}\cdots x_{i_n}
 \qquad\quad
 p_\Lambda = \sum_{(i_1,\dots,i_n)}  x_{i_1}\cdots x_{i_n}
  \qquad\quad
 e_\Lambda = \sum_{(i_1,\dots,i_n)}  x_{i_1}\cdots x_{i_n}
 \]
 where the first, second, and third sums are  over all $n$-tuples of positive integers respectively satisfying the following conditions:  (1) $i_j = i_k$ if and only if  $j$ and $k$ belong to the same block of $\Lambda$, (2) $i_j = i_k$ if $j$ and $k$ belong to the same block of $\Lambda$, and (3) $i_j \neq i_k$ if $j$ and $k$ belong to the same block of $\Lambda$.
The fourth basis is given by setting
 \[ h_\Lambda = \sum_{(f,L)}  
  x_{f(1)}\cdots x_{f(n)}\]
 where the sum of over all pairs $(f,L)$ consisting of a map $f : [n] \to \{1,2,3,\dots\}$ 
 and 
 a sequence $L$ of linear orderings of each of the sets $f^{-1}(i)\cap B$ 
for  $i\geq 1$ and $B \in \Lambda$. (Note that only finitely many of these intersections are nonempty.)
 The sets $\{ m_\Lambda\}$, $\{p_\Lambda\}$, $\{e_\Lambda\}$ and $\{h_\Lambda\}$
  with $\Lambda$ ranging over all partitions of $[n]$ for $n\geq 0$ 
 are then each bases of $\NCSym$.

%
  
  The classical bases of $\Sym$, by contrast, are indexed by  partitions $\lambda = (\lambda_1,\lambda_2,\dots,\lambda_\ell)$ of the integers $n \geq 0$.
  One first
  defines 
 \[ m_\lambda = \sum x_{i_1}^{\lambda_1} \cdots x_{i_\ell}^{\lambda_\ell}\]
 as the multiplicity-free sum of the distinct monomials in the orbit of $x_1^{\lambda_1}\cdots x_\ell^{\lambda_\ell}$ under the action of $S_\infty = \bigcup_{k\geq 0} S_k$.
For each letter $x \in \{p,e,h\}$ one next sets $x_\lambda = x_{\lambda_1}x_{\lambda_2}\cdots x_{\lambda_\ell}$ 
  where 
  \[ p_k = \sum_{i \geq 1} x_i^k 
  \qquand 
  e_k = \sum_{0<i_1 < \dots < i_k} x_{i_1}\cdots x_{i_k} 
  \qquand
  h_k = \sum_{0 < i_1 \leq \dots \leq i_k} x_{i_1}\cdots x_{i_k}
  \]
  for each positive integer $k$,
with  the second two sums over $k$-tuples of positive integers.
 The sets $\{ m_\lambda\}$, $\{p_\lambda\}$, $\{e_\lambda\}$ and $\{h_\lambda\}$ with $\lambda$ ranging over all partitions of nonnegative integers  are then likewise bases of $\Sym$; 
 see, e.g., \cite[Chapter 7]{Stan2}.

\begin{notation}
Given an integer partition $\lambda =(\lambda_1,\lambda_2\dots,\lambda_\ell)$ of $n$, define 
\[\lambda! = \lambda_1! \cdot \lambda_2! \cdots \lambda_\ell!
\qquand 
\lambda^! = m_1!\cdot m_2!\cdots m_n!
\] where $m_i$ is the number parts of $\lambda$ equal to $i$. Recall that the \emph{type} of a set partition is the integer partition whose parts are the sizes of the set partition's blocks.
\end{notation}

Let $\bfPi$ be the connected Hopf monoid defined in Example \ref{Pi-ex}. This Hopf monoid is strongly linearized in its natural basis of set partitions $\Pi$; indeed, we may identify 
\[ \Pi = \cS(\E)\qquand \bfPi = \cS(\kk\E,\Delta)\]
where $\E$ is the set species with $\E[S ] =\{1_\kk\}$ for all   sets $S$ and $\Delta$ is the unique coproduct on $\kk\E$ which is linearized in the basis $\E$. The results in Section \ref{bases-sect}, which give
$\bfPi$ four   distinguished bases indexed by set partitions $\Lambda$,    unify the construction of the classical bases of $\NCSym$ and $\Sym$ in the sense of the following theorem, which is closely tied to the discussion in \cite[Section 17.4.1]{species}.

\begin{theorem}\label{sym-thm}
There are Hopf algebra isomorphisms $f$ and $\overline f$
such that the diagram
\[ 
\begin{diagram}
\cK(\bfPi) && \rTo^{f} && \NCSym \\ 
\dTo &&& & \dTo \\
\overline\cK(\bfPi) && \rTo^{\overline f} && \Sym
\end{diagram}
\]
commutes (where the vertical arrows are  \eqref{ncsym2sym} and \eqref{surj}), 
and such that 
\[
\ba 
f(m_\Lambda) &= m_\Lambda\\
\overline f(m_\Lambda) &= \lambda^! \cdot m_\lambda
\ea
\qquad\qquad
\ba 
f(p_\Lambda) &= p_\Lambda\\
\overline f(p_\Lambda) &=  p_\lambda
\ea
\qquad\qquad
\ba 
f(e_\Lambda) &= e_\Lambda\\
\overline f(e_\Lambda) &= \lambda!\cdot e_\lambda
\ea
\qquad\qquad
\ba 
f(h_\Lambda) &= h_\Lambda\\
\overline f(h_\Lambda) &= \lambda! \cdot h_\lambda
\ea
\] 
whenever $\Lambda$ is a set partition of $[n]$ whose type is the integer partition $\lambda$.
\end{theorem}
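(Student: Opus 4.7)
The plan is to construct $f$ first on the power-sum basis, then read off the other three basis formulas via Möbius inversion, and finally obtain $\overline f$ by descent through the coinvariant quotient.

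Step 1: Define $f\colon \cK(\bfPi) \to \NCSym$ on the $p_\bullet$ basis by $f(p_\Lambda) = p_\Lambda^{NCSym}$ for each $\Lambda \in \Pi[n]$. This is automatically a graded linear isomorphism, since both sides carry graded bases indexed by $\coprod_{n\geq 0} \Pi[n]$. I would then verify $f$ is a morphism of graded connected Hopf algebras. On the algebra side, Theorem \ref{basis-thm}(a) gives $p_\alpha\cdot p_\beta = p_{\alpha\cdot\beta}$ in $\cK(\bfPi)$, where $\alpha\cdot\beta$ stands for the disjoint union of $\alpha\in\Pi[m]$ with the order-shifted copy of $\beta\in\Pi[n]$; the explicit formula for $p_\Lambda^{NCSym}$ as a sum of monomials $x_{i_1}\cdots x_{i_n}$ over tuples with $\mathrm{ker}(i)\geq \Lambda$ immediately yields the same rule. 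On the coalgebra side, Theorem \ref{basis-thm}(b) and the definition of the Fock coproduct \eqref{k-coproduct} give $\Delta(p_\Lambda) = \sum_{S\sqcup T=[n]} \st_{S,T}(p_{\Lambda|_S}\otimes p_{\Lambda|_T})$, summed over ordered decompositions compatible with $\Lambda$ (those in which no block crosses the $S{\mid}T$ boundary); a direct expansion of the informal coproduct $f(x)\mapsto f(x\sqcup y)$ on $p_\Lambda^{NCSym}$ produces exactly the same sum.

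Step 2: Read off the remaining basis formulas for $f$. Since the partial order $\{\prec\}$ on $\Pi$ from Theorem \ref{order-thm} coincides with the refinement order on set partitions, and since the Möbius values $\Mobius(\Lambda)$ from Lemma \ref{basis-lem2} are precisely the classical ones that appear in Sagan--Rosas's change of basis formulas on $\NCSym$, Möbius inversion at once gives $f(m_\Lambda) = m_\Lambda$, $f(e_\Lambda) = e_\Lambda$, and $f(h_\Lambda)=h_\Lambda$.

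Step 3: Define $\overline f\colon \overline\cK(\bfPi) \to \Sym$ as the unique morphism making the square commute. For this I need the composite $\pi\circ f$ to factor through $\cK(\bfPi)\to\overline\cK(\bfPi)$. This holds because $S_n$ acts on $\Pi[n]$ by relabeling, preserving the type $\lambda$ of a set partition, and commutativization $\pi\colon \NCSym\to\Sym$ sends $p_\Lambda^{NCSym}\mapsto p_\lambda^{Sym}$ (depending only on the type), so $\pi\circ f$ is constant on $S_n$-orbits. The remaining scalar factors follow from four short combinatorial calculations of $\pi$ on each classical basis of $\NCSym$: summing over distinct choices of values per block of $\Lambda$ (of type $\lambda$) produces $p_\lambda$, $\lambda^!\, m_\lambda$, $\lambda!\, e_\lambda$, and $\lambda!\, h_\lambda$ respectively. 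Since these are nonzero multiples of bases of $\Sym$, $\overline f$ is a graded isomorphism. The main obstacle is the coalgebra verification in Step 1: matching the informal coproduct $f(x)\mapsto f(x\sqcup y)$ on NCSym with the combinatorial decomposition of the Fock coproduct indexed by subsets of blocks of $\Lambda$. This is routine but requires some care; alternatively, one can invoke the related discussion of $\NCSym$ as a Fock-functor image of $\bfPi$ in \cite[Section 17.4.1]{species}.
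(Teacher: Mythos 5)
Your proposal is correct and follows essentially the same route as the paper: define $f$ on one distinguished basis, verify the Hopf-morphism property by matching structure constants against Theorem \ref{basis-thm} and the Fock functor definitions, transfer to the other bases via the shared M\"obius/change-of-basis data on the refinement lattice, and obtain $\overline f$ by descending through the coinvariant quotient. The only notable difference is that you carry out the structure-constant comparison on the $p_\bullet$ basis by directly expanding the monomial definition of $p_\Lambda$ in $\NCSym$, whereas the paper does it on the $m_\bullet$ basis by citing the published product and coproduct formulas of \cite{preprint}; both verifications go through.
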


\begin{proof}
There are certainly isomorphisms of graded vector spaces $f : \cK(\Pi) \to \NCSym$
and 
 $\overline f : \overline \cK(\Pi) \to \Sym$ which have $f(p_\Lambda) = p_\Lambda$
 and
 $\overline f(p_\Lambda) = p_\lambda$ for all set partitions $\Lambda$ of type $\lambda$.
 By \cite[Theorem 2.1]{Sagan1}, these maps make the diagram in the theorem commute.
The first isomorphism has $f(x_\Lambda)  =x_\Lambda$ for $x \in \{m,e,h\}$ by the change of basis formulas in \cite[Theorems 3.3 and 3.4]{Sagan1}.
The map $\overline f$, in turn, has the given formulas
on $m_\Lambda$, $e_\Lambda$, and $h_\Lambda$ by \cite[Theorem 2.1]{Sagan1}.
Since the vertical maps in the diagram  are surjective Hopf algebra morphisms, it  remains only to check that $f$ is a morphism of Hopf algebras.
This follows  by comparing formulas for the product and coproduct on $\cK(\bfPi)$ and $\NCSym$ in the $m_\bullet$ basis. For $\cK(\bfPi)$, we obtain these formulas  by combining Theorem \ref{basis-thm} with the definitions in Section \ref{fock-sect}. For $\NCSym$, the relevant formulas are \cite[Proposition 3.2]{preprint} and \cite[Eq. (6)]{preprint}.
\end{proof}

\subsection{Superclass functions}
\label{sc-sect}
We describe here a simple way of constructing  three strongly linearized Hopf monoids whose images under the Fock functor $\cK$ are the Hopf algebras of superclass functions introduced in Section \ref{sc-intro}.
For this application we need two sets of definitions.

First, let $G$ be a finite group and recall that $\E_G$ is the set species with  $\E_G[S]$ the set of maps $S \to G$. Given $g \in G$ and a map $f : S \to G$,
define $g\cdot f : S \to G$ as the map $x \mapsto gf(x)$. We write $\wt f$ for the orbit of $f \in \E_G[S]$ under this $G$-action, and define $\wt \E_G$ as the set species with 
\[ \wt \E_G [S] = \left\{ \wt f : f \in \E_G[S]\right\}
\qquand \wt \E_G[\sigma](\wt f) = \wt{f\circ \sigma^{-1}}\text{ for }f \in \E_G[S]\]
for finite sets $S$ and bijections $\sigma : S \to S'$.
The linearizations of $\E_G$ and $\wt \E_G$ are naturally cocommutative  connected comonoids, which we write as the pairs
\[ (\kk\E_G,\Delta) \qquand (\kk\wt\E_G,\Delta).\]
Here, the left and right  coproducts (abusively denoted by the same symbol $\Delta$) act on the bases $\E_G$ and $\wt \E_G$ respectively by 
$ \Delta_{S,T}(f) = f|_S \otimes f|_T
$ and $ \Delta_{S,T}(\wt f) = \wt{f|_S} \otimes \wt{f|_T}$ for maps $f : S\sqcup T \to G$, where $f|_S$ denotes the restriction of  $f$ to the domain $S$.

Next, given two connected species $\p'$ and $\p''$, define the \emph{connected sum} $\p'\csum \p''$ to be the connected species with 
$( \p'\csum\p'')[\varnothing] = \kk $ and with
 \[ ( \p'\csum\p'')[S] =\p'[S]\oplus \p''[S]
\qquand 
(\p'\csum\p'')[\sigma] = \p'[\sigma]\oplus \p''[\sigma]\] for nonempty sets $S$ and bijections $\sigma : S \to S'$.  If $\p'$ and $\p''$  have bases $\P'$ and $\P''$, then 
$\p\csum \p''$ has a basis given by the set subspecies $\P'\csum\P''$ with 
\[  (\P'\csum\P'')[\varnothing] = \{1_\kk\}\qquand (\P'\csum\P'')[S] = \P'[S]\sqcup\P''[S]\]
for nonempty sets $S$.
In turn, we define the \emph{connected sum} of two connected comonoids $(\p',\Delta')$ and $(\p'',\Delta'')$ to be the unique connected comonoid \[(\p,\Delta) = (\p',\Delta') \csum (\p'',\Delta'')\] such that $\p = \p'\csum \p''$ and  $\Delta_{S,T} = \Delta'_{S,T} \oplus \Delta''_{S,T}$ whenever $S$, $T$ are nonempty disjoint sets. (The assumption that this comonoid be connected determines $\Delta_{S,T}$ when $S$ or $T$ is empty.)
Observe that if $(\p',\Delta')$ and $(\p'',\Delta'')$ are linearized with respect to the bases $\P'$ and $\P''$,
then $(\p,\Delta)$ is linearized with respect to $\P'\csum \P''$.

Using these constructions, together with those in  Section \ref{co2hopf},
we now define three finite-dimensional, commutative, cocommutative, strongly linearized connected Hopf monoids $\bfPi_G$, $\bfPi'_G$, and $\bfPi''_G$. 
Namely, we let
\[
\ba
 \bfPi_G &= \cS(\kk\wt \E_G,\Delta) 
 \\
\bfPi'_G &= \cS(\kk\wt \E_{G\times S_2},\Delta)
\\
\bfPi''_G &= \cS\( (\kk\wt \E_{G\times S_2},\Delta) \csum (\kk\E_G,\Delta)\)
.
\ea
\]
We likewise define
\[\Pi_G = \cS(\wt \E_G)
\qquand \Pi'_G= \cS(\wt \E_{G\times S_2})
\qquand \Pi''_G = \cS(\wt \E_{G\times S_2} \csum \E_G)\] as the natural bases with respect to which these   connected Hopf monoids are strongly linearized; observe that $\Pi'_G \subset \Pi''_G$.
Each of these set species inherits a partial order from Theorem \ref{order-thm}.
To describe this order (at least in the first two cases), fix a finite set $S$ and define an action of $G$ on subsets $B $ of $ S \times G$ by 
$g\cdot B = \{ (s,gx) : (s,x ) \in B\}.$
Recall that a $G$-action on a set $X$  is \emph{free} if $g\cdot x =x$ for $g \in G$ and $x \in X$ implies $g=1$. We  have the following proposition.

\begin{proposition}
There is an order-preserving bijection between $\Pi_G[S]$ (ordered by $\prec$ as defined from the structure of $\bfPi_G$ as a linearized Hopf monoid via Theorem \ref{order-thm}) and the set of partitions of $S\times G$ on whose blocks $G$ acts freely 
(ordered by refinement).
\end{proposition}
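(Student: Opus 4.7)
The plan is to construct the bijection explicitly, by viewing a $G$-orbit $\wt f \in \wt\E_G[B]$ as the $G$-orbit of the graph of any representative $f : B \to G$ inside $B \times G$. Given a labeled partition $X = \{(B_i, \wt{f_i})\}_i \in \Pi_G[S]$, I would send it to the partition $\Phi(X)$ of $S \times G$ whose blocks are the translates $h \cdot \Gamma_{f_i}$ for $h \in G$ and varying $i$, where $f_i$ is any representative of $\wt{f_i}$ and $\Gamma_{f_i} = \{(b, f_i(b)) : b \in B_i\}$ denotes its graph. The identity $h \cdot \Gamma_{f_i} = \Gamma_{h \cdot f_i}$ together with the fact that all representatives of $\wt{f_i}$ form a single $G$-orbit shows that $\Phi(X)$ is independent of the chosen representatives. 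Moreover, $h \cdot \Gamma_{f_i} = \Gamma_{f_i}$ would force $h f_i(b) = f_i(b)$ for every $b \in B_i$, hence $h = 1$, so $G$ acts freely on the set of blocks of $\Phi(X)$.

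For the inverse, suppose $P$ is a partition of $S \times G$ on whose blocks $G$ acts freely, and let $\pi_S : S \times G \to S$ be the first projection. I would first observe that if $B$ is a block of $P$ containing two distinct pairs $(s, x_1), (s, x_2)$, then $h = x_2 x_1^{-1}$ satisfies $h \cdot B \cap B \neq \emptyset$, so $h \cdot B = B$ and freeness would force $h = 1$, a contradiction; hence $\pi_S|_B$ is a bijection $B \xrightarrow{\sim} \pi_S(B)$, and inverting it produces a map $f_B : \pi_S(B) \to G$. Different representatives of the $G$-orbit of $B$ yield maps $h \cdot f_B$, so the well-defined class $\wt{f_B} \in \wt\E_G[\pi_S(B)]$ depends only on the orbit. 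The $G$-orbits of blocks partition $S \times G$ into subsets of the form $\pi_S(B) \times G$, so the projections $\pi_S(B)$ for $B$ ranging over orbit representatives partition $S$, and $\Psi(P) = \{(\pi_S(B), \wt{f_B})\}$ defines an element of $\Pi_G[S]$. That $\Phi$ and $\Psi$ are mutually inverse is then a direct verification.

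To prove $\Phi$ is order-preserving, I would use the explicit description of $\prec$ on $\cS(\q, \Delta)$ recalled in the paragraph after Fact \ref{sl-fact}. Since the coproduct on $\kk \wt\E_G$ is given by $\Delta_{S,T}(\wt f) = \wt{f|_S} \otimes \wt{f|_T}$, the relation $X \preceq Y$ in $\Pi_G[S]$ is equivalent to the condition that each block $(B, \wt f) \in Y$ admits a decomposition $B = A_1 \sqcup \dots \sqcup A_k$ with $(A_j, \wt{f|_{A_j}}) \in X$ for every $j$. Given such $X \preceq Y$, fixing a representative $f$ of $\wt f$ and applying the identity $h \cdot \Gamma_f \cap (A_j \times G) = h \cdot \Gamma_{f|_{A_j}}$ shows that each block $h \cdot \Gamma_f$ of $\Phi(Y)$ is the disjoint union of the corresponding blocks of $\Phi(X)$, so $\Phi(X) \leq \Phi(Y)$ under refinement. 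Conversely, if $\Phi(X)$ refines $\Phi(Y)$, each block of $Y$ is recovered as the $\pi_S$-image of a $G$-orbit of blocks of $\Phi(Y)$, and the blocks of $\Phi(X)$ that refine it project to subsets $A_j$ whose labels, read off via $\Psi$, are precisely $\wt{f|_{A_j}}$.

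The main obstacle I anticipate is checking carefully that the ``one-step'' characterization of $\prec$ (block decomposition with label restriction) genuinely coincides with the minimal transitive subspecies produced by Theorem \ref{order-thm} in this labeled setting; this amounts to unpacking the iterated coproduct formulas $\Delta_{S_1,\ldots,S_k}$ in $\cS(\q,\Delta)$ using coassociativity and the pointwise nature of $\Delta$ on $\kk\wt\E_G$. Once this identification is established, the order-preserving bijection follows directly from the geometric identity $h \cdot \Gamma_f \cap (A \times G) = h \cdot \Gamma_{f|_A}$, and the proposition is proved.
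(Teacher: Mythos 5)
Your proposal is correct and follows essentially the same route as the paper: the paper's map $X\mapsto \Lambda_X$ sends a block $(B,\wt f)$ to the collection of graphs $\beta_{B,f'}=\{(b,f'(b)):b\in B\}$ over all representatives $f'$ of $\wt f$, which is exactly your $\Phi$ since those graphs are the $G$-translates $h\cdot\Gamma_f$. The paper leaves the verification that this is an order-preserving bijection to the reader, and your inverse construction via freeness and your refinement argument via $h\cdot\Gamma_f\cap(A\times G)=h\cdot\Gamma_{f|_A}$ supply precisely the omitted details.
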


\begin{proof}
Let $X \in \Pi_G[S]$ and recall that $X$ is a set of pairs $(B,\wt f)$ where $B \subset S$ and $\wt f$ is the $G$-orbit of a map $f : B \to G$.
Given a subset $B \subset S$ and a map $f :  B \to G$, define 
\[ \beta_{B,f} = \{ (b,f(b)) : b \in B \} \subset S\times G.\]
Now let $\Lambda_X \in \Pi[S\times G]$ be the set partition whose blocks are the sets $\beta_{B,f} $
where $B$ and $f$ range over all subsets $B \subset S$ and functions $f : B \to G$ such that $(B,\wt f) \in X$. One checks that the map $X\mapsto \Lambda_X$ is then the desired order-preserving bijection.
\end{proof}

Let $S$ be a subset of $\ZZ$, and write $-S = \{-i : i \in  S\}$. Given a partition $\Lambda$ of $S$, we define $-\Lambda$ to be the partition  $-\Lambda = \{ -B : B \in \Lambda\}$ of $-S$. We say that $\Lambda$ is \emph{symmetric} if $\Lambda = -\Lambda$; note that this can occur only if $S=-S$. We say that a pair $(i,j) \in S \times S$ with $i<j$ is an \emph{arc} of $\Lambda$ if there is a block $B \in \Lambda$ such that $i,j \in B$ and either $k\leq i$ or $j \leq k$ for each $k \in B$. In explanation of this terminology, we comment that it is sometimes convenient to visualize a  partition of $S$ as the graph with vertex set  $S$ whose edges are the set of arcs of the partition. For example,
\[\Lambda= \{ \{ 1,3,4,7\} ,\{2,6\}, \{5\} \} \quad\text{is represented by}\quad \xy<0.0cm,-0.0cm> \xymatrix@R=-0.0cm@C=.3cm{
*{\bullet} \ar @/^.7pc/ @{-} [rr]   & 
*{\bullet} \ar @/^1.0pc/ @{-} [rrrr] &
*{\bullet} \ar @/^.5pc/ @{-} [r]  &
*{\bullet} \ar @/^0.7pc/ @{-} [rrr] &
*{\bullet} &
*{\bullet}  &
*{\bullet}\\
1   & 
2 &
3  &
4 &
5 &
6 & 
7
}\endxy.\]
 A \emph{labeling} of the arcs of $\Lambda$ by a set $X$ consists of a choice of an element $x \in X$ for each arc $(i,j)$. If $X$ is finite and $\Lambda$ has $k$ blocks then 
 the number of $X$-labelings of the arcs of $\Lambda$ is $|X|^{|S|-k}$.

\begin{lemma}\label{sp-lem2} Let $n$ be a nonnegative integer, and define $[\pm n] = \{-n,\dots,-1\} \cup \{1,\dots,n\}$.
\ben
\item[(a)]  There is a bijection from $\Pi_G[n]$ to the set of partitions of $[n]$ whose arcs are $G$-labeled.

\item[(b)] There is a bijection from $\Pi'_G[n]$ to the set of symmetric partitions of $[\pm n]$ whose arcs are $G$-labeled such that if $(i,j)$ is an arc then $(-j,-i)$ is an arc with the same label, and $i+j \neq 0$.

\item[(c)] There is a bijection from $\Pi''_G[n]$ to the set of symmetric partitions of $[\pm n]$ whose 
arcs are $G$-labeled such that if $(i,j)$ is an arc then $(-j,-i)$ is an arc with the same label.
\een
\end{lemma}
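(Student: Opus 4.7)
The plan is to construct explicit bijections in each case, exploiting the fact that for $f : B \to G$ the ratio $f(x)^{-1} f(y)$ is invariant under the left $G$-action $g \cdot f : b \mapsto gf(b)$, and hence descends to the orbit space $\wt{\E}_G[B]$. For part (a), I would send each labeled block $(B, \wt f)$ of $X \in \Pi_G[n]$, with $B = \{b_1 < \cdots < b_k\}$, to the set $B$ equipped with the arc labels $f(b_i)^{-1} f(b_{i+1}) \in G$; the inverse reconstructs the orbit by normalizing $f(b_1) = 1_G$ and accumulating arc labels. Doing this for all blocks of $X$ yields a $G$-arc-labeled partition of $[n]$, and the blockwise counts $|G|^{|B|-1}$ match on both sides.

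For part (b), I would use the extra $S_2$-factor to attach a sign $\epsilon_b \in \{\pm 1\}$ to each $b \in B$. Writing $f(b) = (g_b, \epsilon_b)$, the $S_2$-component of the $(G \times S_2)$-action simultaneously flips every $\epsilon_b$, so the orbit $\wt f$ determines the unordered pair of signed sets $\{\tilde B, -\tilde B\}$ with $\tilde B = \{\epsilon_b \cdot b : b \in B\} \subset [\pm n]$. Taking the union of these pairs over all labeled blocks of $X \in \Pi'_G[n]$ gives a symmetric partition of $[\pm n]$ with no self-conjugate blocks. Arcs inside $\tilde B$ receive the labels $g_{b_i}^{-1} g_{b_{i+1}} \in G$ as in (a), and arcs inside $-\tilde B$ are forced to carry the matching labels so that $(i,j) \mapsto (-j,-i)$ preserves labels. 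The condition $i + j \neq 0$ is automatic because the elements of a single $\tilde B$ have pairwise distinct absolute values, which prevents any arc from joining an element to its negative.

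For part (c), I would handle the two summands of $\wt{\E}_{G \times S_2} \csum \E_G$ separately, since elements of $\Pi''_G[n]$ consist of labeled blocks of either type. Blocks of the first summand are treated exactly as in (b), producing the non-self-conjugate pairs of blocks. An $\E_G$-labeled block $(B, f)$ with $B = \{b_1 < \cdots < b_k\}$ I would send to the single self-conjugate block $C = B \cup (-B) \subset [\pm n]$; this $C$ has exactly one central arc $(-b_1, b_1)$, which is fixed by $(i,j) \mapsto (-j,-i)$, together with $k-1$ pairs of outer symmetric arcs. I would label the central arc by $f(b_1)$ itself, and each outer pair $\{(b_i, b_{i+1}), (-b_{i+1}, -b_i)\}$ by $f(b_i)^{-1} f(b_{i+1})$; since no orbit quotient is taken on this summand, all $|G|^{|B|} = |\E_G[B]|$ assignments are realized. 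Every symmetric partition of $[\pm n]$ decomposes uniquely into self-conjugate blocks (those crossing zero, i.e., containing both $i$ and $-i$ for some $i$) and non-self-conjugate pairs, so the two constructions combine to give the claimed bijection.

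The main technical checks are well-definedness of the arc labels on orbits, which reduces to the identity $(g \cdot f)(x)^{-1}(g \cdot f)(y) = f(x)^{-1} f(y)$, and the uniqueness of the decomposition of a symmetric partition of $[\pm n]$ into self-conjugate and paired blocks; both are routine, so the central difficulty really lies in choosing the right normalization of arc labels so that the symmetry $(i,j) \mapsto (-j,-i)$ is built in automatically rather than imposed as a separate constraint.
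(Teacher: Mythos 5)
Your proposal is correct and follows essentially the same block-by-block construction as the paper: single blocks map to arc-labeled blocks via the $G$-invariant ratios $f(i)^{-1}f(j)$, the $\wt\E_{G\times S_2}$-blocks map to non-self-conjugate pairs $\{\tilde B,-\tilde B\}$ (your sign notation $\epsilon_b$ is just the paper's $\{+g\}\sqcup\{-g\}$ presentation of $G\times S_2$), and the $\E_G$-blocks map to self-conjugate blocks with the central arc carrying the un-quotiented label. The only differences are harmless normalization choices for the outer arc labels.
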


\begin{proof}
Given a subset $B \subset\ZZ$ and a map $f : B \to G$, 
let $\Lambda(B,\wt f)$  be the partition of $B$ with a single block, each of whose arcs $(i,j)$ is labeled by $f(i)^{-1} f(j)$; observe that this label depends only on the orbit $\wt f$ of $f$ under the action of $G$.
Now, if we  define
\[ \varphi(X) = \coprod_{(B,\wt f) \in X} \Lambda(B,\wt f)\qquad\text{for $X \in \Pi_G[n]$}\]
then   $\varphi$ gives a bijection from $\Pi_G[n]$ to the set of partitions of $[n]$ whose arcs are $G$-labeled.

 For parts (b) and (c) we modify this notation in the following way. View $G\times S_2$ as the disjoint union $\{+g : g \in G\} \sqcup \{ -g : g \in G \}$ with multiplication given by 
 \[ (-g)(-h) = (+g)(+h) = +gh\qquand (-g)(+h) = (+g)(-h) = -gh.\]
Next  let $X \in \Pi_G''[n]$. Each block of $X$ is then either of the form (i) $(B,\wt f)$ or (ii) $(B,f)$ where $B \subset [n]$ 
 is nonempty and $f$ is either a map $B \to G\times S_2$ in the first case or a map $ B \to G$ in the second.
We  assign a $G$-labeled set partition to each such pair 
in the following way.

 For blocks of the first type, define $\Lambda_\pm(B,\wt f)$
 to be the partition of $-B \sqcup B$ with exactly two blocks
 such that if $(i,j)$ is an arc of $\sh(X)$ with $i,j \in B$ then the following properties hold:
 \begin{itemize}
 \item If $f(i)^{-1}f(j) = +g$ for some $g \in G$
 then
 $i$, $j$ belong to the same block of $\Lambda_\pm(B,\wt f)$.
 

 \item If $f(i)^{-1}f(j) = -g$ for some $g \in G$
 then
 $-i$, $j$ belong to the same block of $\Lambda_\pm(B,\wt f)$.

 
 \item If $C$ is one block of $\Lambda_\pm(B,\wt f)$ then $-C$ is the other block.

 \end{itemize}
Observe that if $(i,j)$ is an arc of $\Lambda_{\pm}(B, \wt f)$ then $(-j,-i)$ is a different arc; without loss of generality assume $|i| < |j|$ and label both of these arcs by  
 the element $g \in G$ such that $f(|i|)^{-1} f(|j|) = \pm g$.
%
%
One checks that $\Lambda_\pm(B,\wt f)$ is then a symmetric partition whose arcs are $G$-labeled such that if $(i,j)$ is an arc then $(-j,-i)$ is an arc with the same label, and $i+j \neq 0$.

Alternatively, for blocks $(B,f) \in X$ of type (ii) define $\Lambda_\pm(B,f)$ to be the partition of $-B \sqcup B$ with a single block, whose arcs of the form $(i,j)$  and $(-j,-i)$ with $j > 0$
are both labeled by $f(j)$.
If we now define 
\[ \varphi_\pm(X) = \coprod_{(B, \wt f)} \Lambda_\pm(B,\wt f) \sqcup \coprod_{(B,f)} \Lambda_\pm(B,f)\]
where the first disjoint union is over the blocks of $X$ of type (i)
and the second is over the blocks of type (ii), then $\varphi_\pm$ 
is a bijection from $\Pi''_G[n]$ to the set of symmetric partitions of $[\pm n]$ whose 
arcs are $G$-labeled such that if $(i,j)$ is an arc then $(-j,-i)$ is an arc with the same label.
Moreover, the bijection in part (b) is just the restriction of this map.
\end{proof}

Let $\FF$ denote a finite field and 
recall  from the introduction the definition of the graded vector spaces
of superclass functions on the  maximal unipotent subgroups of the special linear, even orthogonal, and symplectic groups over $\FF$, denoted
 \[ \SC(\USL_\bullet,\FF) \qquand \SC(\UO_\bullet,\FF)\qquand \SC(\USp_\bullet,\FF).\]
 Note that the second two spaces are only defined when $\FF$ has an odd number of elements.
In the following theorem, we write $\FF^\times$ for the multiplicative group of nonzero elements of $\FF$. 
 
 \begin{theorem}\label{sc-thm}
 There are isomorphisms of graded vector spaces 
 \[ \cK(\bfPi_{\FF^\times}) \cong \SC(\USL_\bullet,\FF)
 \quand
 \cK(\bfPi'_{\FF^\times}) \cong \SC(\UO_\bullet,\FF)
 \quand
 \cK(\bfPi''_{\FF^\times}) \cong \SC(\USp_\bullet,\FF)
 \]
 which endow 
 $\SC(\USL_\bullet,\FF)$, $\SC(\UO_\bullet,\FF)$, and $\SC(\USp_\bullet,\FF)$ with the structures of graded connected Hopf algebras. The Hopf algebra structures so defined on $\SC(\USL_\bullet,\FF)$ and $\SC(\UO_\bullet,\FF)$ are  isomorphic to those respectively described in \cite{AZ} and \cite{CB}.
 \end{theorem}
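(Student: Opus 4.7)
The plan has two parts: first identify the relevant graded vector spaces, then verify that the transported Hopf algebra structures agree with those in the literature.

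For the vector space isomorphisms, I will apply the Fock functor $\cK$, which sends a linearized connected species $\p = \kk\P$ to the graded vector space $\bigoplus_{n \geq 0} \p[n]$ whose natural basis is $\coprod_{n \geq 0} \P[n]$. Lemma \ref{sp-lem2} then identifies the bases $\coprod_n \Pi_{\FF^\times}[n]$, $\coprod_n \Pi'_{\FF^\times}[n]$, and $\coprod_n \Pi''_{\FF^\times}[n]$ with, respectively, arbitrary $\FF^\times$-arc-labeled partitions of $[n]$, symmetric $\FF^\times$-arc-labeled partitions of $[\pm n]$ with no arc of the form $(i,-i)$, and symmetric $\FF^\times$-arc-labeled partitions of $[\pm n]$ with no such restriction. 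The classical parametrization of superclasses of $\USL_n(\FF)$ sends $1+X$ to the labeled partition whose arcs $(i,j)$ record the ``pivot'' entries $X_{ij} \in \FF^\times$; this is a well-known bijection (see \cite{DI, AZ}) matching the first family exactly. The results of Andr\'e-Neto \cite{bcd1, bcd2, bcd3} and Andrews \cite{Andrews} furnish the analogous bijections for $\UO_{2n}(\FF)$ and $\USp_{2n}(\FF)$, matching symmetric arc diagrams with the condition on $(i,-i)$ arcs distinguishing the two groups. Composing these bijections yields the three required isomorphisms of graded vector spaces.

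For the Hopf algebra compatibility, I will combine Theorem \ref{basis-thm} with the formulas \eqref{k-prod} and \eqref{k-coproduct} defining $\cK$. The product on $\cK(\bfPi_{\FF^\times})$ sends a pair of arc-labeled partitions on $[m]$ and $[n]$ to the labeled partition on $[m+n]$ obtained by disjoint union and shifting, while the coproduct sends a labeled partition $\Lambda$ of $[n]$ to the sum of $\st_{S,T}(\Lambda|_S) \otimes \st_{S,T}(\Lambda|_T)$ over ordered decompositions $[n] = S \sqcup T$ such that every arc of $\Lambda$ lies inside $S$ or inside $T$. These match term-by-term the superclass-indexed formulas for the product and coproduct of $\SC(\USL_\bullet, \FF)$ described in \cite{AZ}, which arise from superinduction and restriction along the block-diagonal embeddings $\USL_m(\FF) \times \USL_n(\FF) \hookrightarrow \USL_{m+n}(\FF)$. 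The $\UO$ case follows analogously, using Benedetti's embedding $\UO_{2m}(\FF) \times \UO_{2n}(\FF) \hookrightarrow \UO_{2m+2n}(\FF)$ and the corresponding super(co)induction formulas of \cite{CB}, after tracking how interleaving two symmetric arc diagrams on $[\pm m]$ and $[\pm n]$ produces a symmetric arc diagram on $[\pm(m+n)]$.

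The main obstacle is the symplectic case, where no prior Hopf algebra structure on $\SC(\USp_\bullet, \FF)$ appears in the literature. My approach is to define this structure directly, mimicking Benedetti's construction: use the block-diagonal embeddings $\USp_{2m}(\FF) \times \USp_{2n}(\FF) \hookrightarrow \USp_{2m+2n}(\FF)$ to define a super(co)induction product and coproduct. The theorem then amounts to checking that this structure coincides with the one transported via $\cK(\bfPi''_{\FF^\times}) \cong \SC(\USp_\bullet, \FF)$. The extra ingredient, relative to the orthogonal case, is the connected-sum summand $(\kk\E_{\FF^\times}, \Delta)$ appearing in the definition of $\bfPi''_{\FF^\times}$: this summand encodes the data of self-paired arcs $(i,-i)$, which occur in symplectic superclasses but not orthogonal ones. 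The key verification is that the coproduct of $\bfPi''_{\FF^\times}$ cuts these self-paired arcs in a way that matches the restriction from $\USp_{2(m+n)}$ to the symplectic Levi $\USp_{2m} \times \USp_{2n}$, and that the product similarly reassembles them correctly under super-induction.
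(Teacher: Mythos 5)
Your overall strategy is the paper's: use Lemma \ref{sp-lem2} to match the natural bases of $\cK(\bfPi_{\FF^\times})$, $\cK(\bfPi'_{\FF^\times})$, $\cK(\bfPi''_{\FF^\times})$ with the arc-labeled partitions indexing superclasses, and then verify the Hopf structure by comparing product and coproduct formulas against those in \cite{AZ} and \cite{CB}. Two points in your second step need repair, however. First, the pair of formulas you write down --- single-term product by disjoint union and shifting, coproduct supported only on decompositions $[n]=S\sqcup T$ in which no arc is cut --- is the $p_\bullet$-basis description from Theorem \ref{basis-thm}, not the description in the natural basis $\cS(\wt\E_{\FF^\times})$ (whose coproduct, being strongly linearized, splits every block and contributes a term for \emph{all} $2^n$ decompositions), and not the $m_\bullet$-basis description either. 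The superclass indicator basis $\{\kappa_\lambda\}$ of $\SC(\USL_\bullet,\FF)$ in \cite{AZ} has product given by \emph{inflation} (not superinduction, as you assert --- see the discussion at the end of Section \ref{sc-sect}), and inflation of indicator functions produces a \emph{sum} of superclass indicators. So your single-term product cannot match it term-by-term; the correct comparison for the $\USL$ case is between the $m_\bullet$ basis of $\cK(\bfPi_{\FF^\times})$ and $\{\kappa_\lambda\}$, while the $p_\bullet$ basis is the one that matches the basis $\{P_\lambda\}$ of \cite{CB} in the $\UO$ case. Since the entire content of the second claim is this formula comparison, pairing the wrong bases is a real gap, albeit one fixed by swapping in $m_\bullet$ for the linear case.

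Second, the symplectic verification you describe as ``the main obstacle'' is not required by the statement: the theorem only asserts that the transported structures on $\SC(\USL_\bullet,\FF)$ and $\SC(\UO_\bullet,\FF)$ agree with the structures already in the literature, and for $\SC(\USp_\bullet,\FF)$ the Hopf algebra structure is simply \emph{defined} by transport along the vector space isomorphism. Constructing an independent super(co)induction structure on $\SC(\USp_\bullet,\FF)$ and proving it coincides with the transported one would be a genuinely new result beyond the theorem (and the paper itself explicitly notes that the products for the orthogonal and symplectic towers do \emph{not} admit an inflation-style group-theoretic interpretation), so you should either drop this part or present it as a separate claim with its own proof.
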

 
 \begin{proof}
 The first statement in the theorem is equivalent to Lemma \ref{sp-lem2}, as the sets described by that result to be in bijection with $\Pi_G[n]$, $\Pi'_G[n]$, and $\Pi''_G[n]$ are precisely the  indexing sets of the natural bases of $\SC(\USL_n,\FF)$, $\SC(\UO_{2n},\FF)$, and $\SC(\USp_{2n},\FF)$; see \cite[Section 2.1]{AZ} and \cite[Sections 3 and 4.1]{CB}.
 The second claim in the theorem
 follows by comparing:
 \begin{itemize}
 \item The product and coproduct formulas for the $m_\bullet$ basis of $\cK(\bfPi_{\FF^\times}) $ 
 with the formulas for the basis $\{\bf k_\lambda\}$  of $ \SC(\USL_\bullet,\FF)$ in \cite[Proposition 3.5 and Theorem 3.6]{AZ}.
 
\item The product and coproduct formulas for the $p_\bullet$ basis of  $\cK(\bfPi'_{\FF^\times})$ with the formulas for the basis $\{ P_\lambda\}$  of $\SC(\UO_\bullet,\FF)$ in \cite[Proposition 3.13]{CB}.
\end{itemize}
These comparisons show that the isomorphisms of graded vector spaces induced by the bijections in Lemma \ref{sp-lem2} (applied in the two cases here to identify the given pairs of bases) are isomorphisms of Hopf algebras.
  \end{proof}

We observe at this point that the connected Hopf monoid $\bfPi_G$ depends up to isomorphism only on the size of $G$. Indeed, this is true of the connected comonoid $(\kk\wt\E_G,\Delta)$,
and it therefore follows from Theorem \ref{sym-thm} that   $\SC(\USL_\bullet,\FF) \cong \NCSym$ whenever $\FF$ is a field with two elements \cite[Theorem 3.2]{AZ}.
Likewise, the following theorem derives as a corollary to the preceding result.
 
 \begin{theorem}\label{hiso-thm} Let $\FF$ and $\FF'$ be finite fields with $|\FF'|$ odd. Then  $ \SC(\USL_\bullet,\FF) $ and $ \SC(\UO_\bullet,\FF')$ are isomorphic Hopf algebras 
 if and only if $|\FF| = 2|\FF'|-1$.
 \end{theorem}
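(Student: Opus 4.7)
The plan is to derive both directions from the observation recorded in the paper immediately before the theorem: the connected Hopf monoid $\cS(\kk\wt\E_G,\Delta)$ depends up to isomorphism only on the cardinality $|G|$. By construction $\bfPi_G = \cS(\kk\wt\E_G,\Delta)$ and $\bfPi'_H = \cS(\kk\wt\E_{H\times S_2},\Delta)$, so this observation immediately yields $\bfPi'_H \cong \bfPi_G$ as connected Hopf monoids in species whenever $|G|=2|H|$.

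For the ``if'' direction, suppose $|\FF|=2|\FF'|-1$, equivalently $|\FF^\times|=2|\FF'^\times|$. The observation above then produces an isomorphism $\bfPi_{\FF^\times}\cong\bfPi'_{\FF'^\times}$ of connected Hopf monoids in species. Applying the Fock functor $\cK$, which is a functor into graded connected Hopf algebras, and then invoking Theorem \ref{sc-thm} to identify the source and target yields the desired isomorphism $\SC(\USL_\bullet,\FF)\cong\SC(\UO_\bullet,\FF')$.

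For the ``only if'' direction, I plan to recover $|\FF|$ and $|\FF'|$ from the graded dimensions. Using Lemma \ref{sp-lem2}(a), the definition of the Fock functor $\cK$, and the fact that a set partition of $[n]$ with $k$ blocks has exactly $n-k$ arcs, I compute
\[ \dim \cK(\bfPi_G)_n \;=\; |\Pi_G[n]| \;=\; \sum_{\Lambda\in\Pi[n]} |G|^{\,n-|\Lambda|}, \]
so in particular the degree-$2$ component has dimension $1+|G|$. Invoking $\bfPi'_{\FF'^\times}\cong\bfPi_{\FF'^\times\times S_2}$ and comparing the degree-$2$ dimensions of $\SC(\USL_\bullet,\FF)$ and $\SC(\UO_\bullet,\FF')$ forces $1+|\FF^\times|=1+2|\FF'^\times|$, i.e.\ $|\FF|=2|\FF'|-1$.

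The whole content of the theorem is concentrated in the ``if'' direction, which is essentially free given the size-only dependence of $\cS(\kk\wt\E_G,\Delta)$ together with the functoriality of $\cK$; there is no genuine obstacle. The only point requiring care is the interpretation of ``isomorphic Hopf algebras'' in the converse: for the dimension argument to apply one should read this as an isomorphism of graded connected Hopf algebras, which is the standing convention in the paper; if one insisted on the ungraded version, a minor refinement (comparing, for example, dimensions of spaces of degree-$2$ primitive elements, or the coradical filtration) would suffice.
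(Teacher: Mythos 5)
Your ``only if'' direction is fine and supplies a detail the paper leaves unstated: the degree-$2$ components of $\cK(\bfPi_{\FF^\times})$ and $\cK(\bfPi'_{\FF'^\times})$ have dimensions $1+|\FF^\times|=|\FF|$ and $1+2|\FF'^\times|=2|\FF'|-1$, and an isomorphism in the paper's category of graded Hopf algebras preserves graded dimensions. The ``if'' direction follows the same one-line route as the paper, but the first step is a genuine gap: the claim that $\cS(\kk\wt\E_G,\Delta)$ depends up to isomorphism only on $|G|$, hence that $\bfPi'_{H}=\bfPi_{H\times S_2}\cong\bfPi_G$ whenever $|G|=2|H|$, fails at the level of species. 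Indeed $\wt\E_G[\{1,2\}]$ is the $S_2$-set $G$ with the transposition acting by $g\mapsto g^{-1}$, so the character of the permutation module $\kk\wt\E_G[\{1,2\}]$ at the transposition is the number of elements of $G$ of order dividing $2$. In the situation of the theorem this number is $2$ for $\FF^\times\cong\ZZ/(2(|\FF'|-1))$ but $4$ for $\FF'^\times\times S_2\cong\ZZ/(|\FF'|-1)\times\ZZ/2$ (as $|\FF'|-1$ is even), so the two vector species are already non-isomorphic in degree $2$ and no isomorphism of Hopf monoids $\bfPi_{\FF^\times}\cong\bfPi'_{\FF'^\times}$ exists. (This issue is inherited from the remark preceding the theorem in the paper, but your write-up relies on it explicitly.)

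The statement survives because the required isomorphism only needs to exist after applying $\cK$, which forgets the $S_n$-actions. By Theorem \ref{basis-thm}, in the $p_\bullet$ basis the product of $\cK(\bfPi_G)$ is shifted disjoint union of labeled set partitions and the coproduct of $p_Z$ is the sum of $p_{\st(Z|_S)}\otimes p_{\st(Z|_T)}$ over those decompositions $[n]=S\sqcup T$ for which every block of $Z$ lies entirely in $S$ or in $T$; no block is ever split, so the group law of $G$ never enters. Identifying each label set $\wt\E_G[B]$ with a fixed set $Q_{|B|}$ of cardinality $|G|^{|B|-1}$ via the order-preserving bijection $B\to[|B|]$ (these identifications commute with all standardization and shift maps, which are order-preserving), one sees that $\cK(\bfPi_G)$, as a graded connected Hopf algebra with distinguished basis $p_\bullet$, is determined by the sequence $(|G|^{k-1})_{k\geq 1}$ alone, hence by $|G|$. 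Since $\bfPi'_H=\bfPi_{H\times S_2}$ by definition, this yields $\cK(\bfPi_{\FF^\times})\cong\cK(\bfPi'_{\FF'^\times})$ whenever $|\FF^\times|=2|\FF'^\times|$, and Theorem \ref{sc-thm} then gives the ``if'' direction. Please rework the first half of your argument along these lines, constructing the isomorphism at the Hopf-algebra level rather than at the species level.
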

%

We close this section with some remarks about how the product and coproduct of the Hopf algebras in Theorem \ref{sc-thm} correspond to operations on superclass functions.
Whenever $i+j =n$ there are numerous injective homomorphisms 
$\USL_i(\FF) \times \USL_j(\FF) \hookrightarrow \USL_n(\FF)$
which give rise to linear maps 
 \[\SC(\USL_n,\FF) \to \SC(\USL_i,\FF) \otimes \SC(\USL_j,\FF).\]
The coproduct of $\SC(\USL_\bullet,\FF)$ on its $m_\bullet $ (and also $p_\bullet$) basis
can be naturally interpreted via these restriction maps; see the discussion in \cite[Section 2.2]{AZ}.
There are likewise 
   injective homomorphisms 
$\UO_{2i}(\FF) \times \UO_{2j}(\FF) \hookrightarrow \UO_{2n}(\FF)$
and
$\USp_{2i}(\FF) \times \USp_{2j}(\FF) \hookrightarrow \USp_{2n}(\FF)$
giving rise on superclass functions to well-defined restriction maps,  
with which one can similarly interpret the coproduct on the $m_\bullet$ bases of
$\SC(\UO_\bullet,\FF)$
and
$\SC(\USp_\bullet,\FF)$; see \cite[Section 3.2]{CB}.

Going in the opposite direction, there is a surjective homomorphism $\USL_n(\FF) \to \USL_i(\FF)\ \times \USL_j(\FF)$
 which defines an inflation map on superclass functions
 \[   \SC(\USL_i,\FF) \otimes \SC(\USL_j,\FF) \to \SC(\USL_n,\FF).\]
The product of $\SC(\USL_\bullet,\FF)$ on its $m_\bullet$ basis corresponds to this inflation map; see \cite[Section 3.1]{AZ}. 
The products of 
$\SC(\UO_\bullet,\FF)$
and
$\SC(\USp_\bullet,\FF)$ lack such an interpretation, as the corresponding towers of groups do not possess analogous surjective homomorphisms.
Rather, there is a sense in which we should view the products on the $m_\bullet$ bases of these Hopf algebras not as group theoretic operations 
but simply as artifacts of the natural orders on $\Pi_G'$ and $\Pi_G''$ at the species level; see  \cite[Theorem 4.1.9]{Me}.

\subsection{Combinatorial Hopf monoids}
\label{comb-sect}


In this final section we describe a species analogue of the notion of a \emph{combinatorial Hopf algebra} from \cite{ABS} (whose definition was given in Section \ref{intro-comb-sect}).
The scope of our discussion here is notably limited, and there is presumably much more material in \cite{ABS} with interesting generalizations to Hopf monoids in species than we have attempted to include.

\begin{definition}
A \emph{combinatorial Hopf monoid} is a pair $(\h,\zeta)$ where $\h = (\p,\nabla,\Delta)$ is a finite-dimensional connected Hopf monoid and $\zeta : (\p,\nabla) \to \bfE$ is a morphism of connected monoids.
\end{definition}

Here, the monoid structure on $\bfE$ is given by forgetting the coproduct in Example \ref{E-ex}. 
We refer to $\zeta$ as the character of $\h$.  We typically omit the subscript when referring to the $I$-components of characters, and just write $\zeta(x)$ for the image of $x \in \p[I]$ under $\zeta_I$.
Combinatorial Hopf monoids form a category in which the morphisms between two objects 
$ (\h,\zeta) $ and $ (\h',\zeta')$  are morphisms of connected Hopf monoids $f : \h \to \h'$ such that $\zeta = \zeta' \circ f$.

We may view $\cK$ and $\overline \cK$ as functors between the categories of combinatorial Hopf monoids and combinatorial Hopf algebras by merit of the following basic fact, which we state without proof:

\begin{fact}
If $(\h,\zeta)$ is a combinatorial Hopf monoid then $\cK(\h)$ and $\overline{\cK}(\h)$ are combinatorial Hopf algebras with respect to the characters 
\[\Tr\circ\cK(\zeta) \qquand \Tr\circ\overline\cK(\zeta),\] where we identify $\cK(\bfE) \cong \overline\cK(\bfE) \cong \kk[x]$ and define $\Tr$ as the map $\kk[x] \to \kk$ given by $f\mapsto f(1_\kk)$.
With respect to these conventions, if $\alpha : (\h,\zeta) \to (\h',\zeta')$ is a morphism of combinatorial Hopf monoids then $\cK(\alpha)$ and $\overline\cK(\alpha)$ are morphisms of combinatorial Hopf algebras.
\end{fact}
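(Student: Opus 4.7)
The plan is to reduce both claims to the functoriality of $\cK$ and $\overline{\cK}$ on the category of connected monoids in species, together with the elementary observation that the trace map $\Tr : \kk[x] \to \kk$ is an algebra homomorphism. The key point is that although $\cK$ and $\overline{\cK}$ were introduced as functors on Hopf monoids, inspection of the constructions in Section~\ref{fock-sect} shows that they restrict to functors from connected monoids in species to graded connected algebras, using the formula \eqref{k-prod} to define the multiplication without reference to any coproduct. Since $\zeta : (\p,\nabla) \to \bfE$ is by hypothesis a morphism of connected monoids, this gives graded algebra homomorphisms
\[
\cK(\zeta) : \cK(\h) \to \cK(\bfE) \qquand \overline{\cK}(\zeta) : \overline{\cK}(\h) \to \overline{\cK}(\bfE).
\]

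Next I would unpack the identifications $\cK(\bfE) \cong \overline{\cK}(\bfE) \cong \kk[x]$. Under these identifications the product on either side is ordinary polynomial multiplication, so $\Tr : \kk[x] \to \kk$ given by $f \mapsto f(1_\kk)$ is an algebra homomorphism (it is evaluation at a point). Composing algebra homomorphisms, $\Tr \circ \cK(\zeta)$ and $\Tr \circ \overline{\cK}(\zeta)$ are algebra homomorphisms from $\cK(\h)$ and $\overline{\cK}(\h)$ to $\kk$. Since $\cK(\h)$ and $\overline{\cK}(\h)$ are already known to be graded connected Hopf algebras whose homogeneous components are finite-dimensional (as $\h$ is assumed finite-dimensional), the pairs $(\cK(\h), \Tr\circ\cK(\zeta))$ and $(\overline{\cK}(\h), \Tr\circ\overline{\cK}(\zeta))$ satisfy the definition of a combinatorial Hopf algebra recalled in Section~\ref{intro-comb-sect}.

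For the statement about morphisms, if $\alpha : (\h,\zeta) \to (\h',\zeta')$ is a morphism of combinatorial Hopf monoids then $\zeta = \zeta' \circ \alpha$ as morphisms of connected monoids. Applying the functor $\cK$ (which preserves composition and sends morphisms of Hopf monoids to morphisms of graded Hopf algebras) gives $\cK(\zeta) = \cK(\zeta') \circ \cK(\alpha)$, and precomposing $\Tr$ yields
\[
\Tr \circ \cK(\zeta) = (\Tr \circ \cK(\zeta')) \circ \cK(\alpha),
\]
which is precisely the condition that $\cK(\alpha)$ be a morphism of combinatorial Hopf algebras. The same argument with $\overline{\cK}$ in place of $\cK$ handles the coinvariant version.

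There is essentially no main obstacle here: the whole proof is a bookkeeping exercise in functoriality. The only spot worth a sentence of care is verifying that $\cK(\bfE) \cong \kk[x]$ as an algebra (not just as a graded vector space), but this is immediate from the example preceding Section~\ref{sym-sect} specialized to $C = \{1\}$, together with the definition of the product \eqref{k-prod}, which in this case places the two generators into consecutive positions and thus reproduces ordinary polynomial multiplication.
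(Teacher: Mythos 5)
Your proof is correct. The paper explicitly states this Fact without proof, and your argument is exactly the verification one would expect: the Fock functors restrict to functors on connected monoids (so $\cK(\zeta)$ and $\overline\cK(\zeta)$ are graded algebra maps into $\cK(\bfE)\cong\overline\cK(\bfE)\cong\kk[x]$), evaluation at $1_\kk$ is an algebra homomorphism, and the morphism statement follows from functoriality applied to $\zeta = \zeta'\circ\alpha$. No gaps.
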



Define $\zeta_\Pi : \bfPi \to \bfE$ such that $\zeta_\Pi(p_\Lambda) = 1$ for all set partitions $\Lambda$. The pair $(\bfPi,\zeta_\Pi)$ is then a combinatorial Hopf monoid by Theorem \ref{basis-thm}. We note the following lemma.

\begin{lemma}\label{char-fact}
If $\Lambda$ is a set partition then $\zeta_\Pi(m_\Lambda) =\begin{cases}  1 &\text{if $\Lambda$ has at most one block} \\ 0 & \text{otherwise}.\end{cases}$  
\end{lemma}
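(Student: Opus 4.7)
The plan is to expand $m_\Lambda$ in the $p_\bullet$ basis, apply $\zeta_\Pi$, and then recognize the resulting sum as a special case of the defining identity of the Möbius function.

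First I would recall from the definition in Section \ref{bases-sect} (via Theorem \ref{basis-thm}) that
\[ m_\Lambda = \sum_{\Lambda' \succeq \Lambda} \Mobius(\Lambda, \Lambda') \cdot p_{\Lambda'},\]
where the sum ranges over $\Lambda' \in \Pi[I]$ (with $I$ the ground set of $\Lambda$) and $\{\prec\}$ denotes the partial order attached to $\bfPi$ by Theorem \ref{order-thm}. As recalled just before Section \ref{ssd-sect}, when $\h = \bfPi$ and $\P = \Pi$, this partial order coincides with the refinement order on set partitions.

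Next I would apply $\zeta_\Pi$. Since $\zeta_\Pi$ is linear and $\zeta_\Pi(p_{\Lambda'}) = 1$ for every $\Lambda'$ by definition, we obtain
\[ \zeta_\Pi(m_\Lambda) = \sum_{\Lambda' \succeq \Lambda} \Mobius(\Lambda, \Lambda').\]
The main (minor) step is now to identify this sum. The poset $\Pi[I]$ has a maximum element, namely the single-block partition $\hat 1_I = \{I\}$ when $I \neq \varnothing$, and the empty partition when $I = \varnothing$. Since $\Lambda'' \succeq \Lambda$ is automatically equivalent to $\Lambda \preceq \Lambda'' \preceq \hat 1_I$, the defining identity of $\Mobius$ (applied with upper bound $\hat 1_I$) gives
\[ \sum_{\Lambda' \succeq \Lambda} \Mobius(\Lambda, \Lambda') = \sum_{\Lambda \preceq \Lambda'' \preceq \hat 1_I} \Mobius(\Lambda, \Lambda'') = \begin{cases} 1 & \text{if } \Lambda = \hat 1_I, \\ 0 & \text{otherwise.}\end{cases}\]

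Finally I would observe that the condition $\Lambda = \hat 1_I$ is precisely the statement that $\Lambda$ has at most one block (covering both the nonempty case with one block and the $I = \varnothing$ case of the empty partition), completing the proof. There is no real obstacle here, as the argument is a direct computation once $m_\Lambda$ is written in the $p_\bullet$ basis and the maximum of $\Pi[I]$ is identified.
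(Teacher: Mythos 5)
Your proposal is correct and follows essentially the same route as the paper: expand $m_\Lambda$ in the $p_\bullet$ basis, use $\zeta_\Pi(p_{\Lambda'})=1$, and recognize the resulting sum $\sum_{\Lambda' \succeq \Lambda}\Mobius(\Lambda,\Lambda')$ as the defining identity of the M\"obius function evaluated against the maximal (at most one block) partition. The only difference is that you spell out the identification of the maximum of $\Pi[I]$ a bit more explicitly than the paper does; no gap.
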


\begin{proof}
Suppose $\Lambda \in \Pi[S]$. Let $\Lambda''$ be the unique partition of $S$ with at most one block.
Then $\zeta_\Pi(m_\Lambda) = \sum_{\Lambda' \geq \Lambda} \Mobius(\Lambda,\Lambda') \cdot \zeta_\Pi(p_{\Lambda'}) = \sum_{\Lambda \leq \Lambda' \leq \Lambda''} \Mobius(\Lambda,\Lambda').$
The last expression is equal to the desired formula by the definition of the M\"obius function.
\end{proof}

The following theorem, which  closely mirrors the analogous statement \cite[Theorem 4.3]{ABS}
for Hopf algebras, shows that $(\bfPi,\zeta_\Pi)$ is a terminal object in the full subcategory of combinatorial Hopf monoids which are cocommutative.
This result is a special case of \cite[Theorem 11.27]{species}, but for completeness (and since the more general theorem is stated using somewhat different language than what is given here) we have included a self-contained proof.

\begin{theorem}[Aguiar and Mahajan \cite{species}] \label{terminal-thm}
For any cocommutative combinatorial Hopf monoid $(\h,\zeta)$, there exists a unique morphism of combinatorial Hopf monoids
$\Psi : (\h,\zeta) \to (\bfPi,\zeta_\Pi).$
This morphism has the following formula. Given a nonempty finite set $I$ and $x \in \h[I]$,
\[\Psi_I(x) = \sum_{\Lambda \in \Pi[I]} \zeta_\Lambda(x) \cdot m_\Lambda\]
where for a set partition $\Lambda = \{B_1,\dots,B_k\} \in \Pi[S]$ we define $\zeta_\Lambda$ as the composition
\[ \h[S] \xrightarrow{\ \ \Delta_{B_1,\dots,B_k}\ \ } \h[B_1]\otimes \dots \otimes \h[B_k] \xrightarrow{\ \ \zeta_{B_1}\otimes \dots \otimes \zeta_{B_k}\ \ } \kk\otimes \cdots \otimes \kk\xrightarrow{\ \ \sim\ \ } \kk.\]
\end{theorem}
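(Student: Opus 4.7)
The plan is to establish uniqueness first, and then verify that the displayed formula genuinely defines a morphism of combinatorial Hopf monoids. The formula itself requires an immediate sanity check: $\zeta_\Lambda$ apparently depends on an ordering of the blocks $\{B_1,\dots,B_k\}$ of $\Lambda$, but cocommutativity of $\Delta$ combined with coassociativity implies that $\Delta_{B_1,\dots,B_k}$ is independent of the ordering up to a permutation of tensor factors, which becomes immaterial after applying $\zeta^{\otimes k}$ and multiplying in $\kk$. This is the one place where cocommutativity of $\h$ is genuinely used.

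For uniqueness, suppose $\Psi$ is any morphism of combinatorial Hopf monoids, and fix $x\in \h[I]$ together with a partition $\Lambda=\{B_1,\dots,B_k\}\in\Pi[I]$. Since $\Psi$ commutes with iterated coproducts and $\zeta = \zeta_\Pi\circ\Psi$, one obtains $\zeta_\Lambda(x) = (\zeta_\Pi^{\otimes k})\circ\Delta_{B_1,\dots,B_k}(\Psi_I(x))$. Expanding $\Psi_I(x) = \sum_{\Lambda'}c_{\Lambda'}\,m_{\Lambda'}$ and using the coproduct formula of Theorem~\ref{basis-thm}(b), the summand indexed by $\Lambda'$ contributes only when $\Lambda'$ refines $\Lambda$, in which case it yields $c_{\Lambda'}\cdot m_{\Lambda'|_{B_1}}\otimes\cdots\otimes m_{\Lambda'|_{B_k}}$. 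Applying Lemma~\ref{char-fact} in each tensor factor forces $\Lambda'|_{B_i}=\{B_i\}$ for all $i$, hence $\Lambda'=\Lambda$. Thus $c_\Lambda = \zeta_\Lambda(x)$, and the formula for $\Psi$ is forced.

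For existence I would verify compatibility with each piece of structure in turn. Naturality and compatibility with units and counits are routine from the corresponding properties of $\zeta$ and $\Delta$. The identity $\zeta_\Pi\circ\Psi = \zeta$ is immediate from Lemma~\ref{char-fact}, which kills every summand except the single contribution from $\Lambda=\{I\}$, for which $\zeta_{\{I\}}(x) = \zeta_I(x)$. The comonoid compatibility $(\Psi_S\otimes\Psi_T)\circ\Delta_{S,T} = \Delta_{S,T}\circ\Psi_{S\sqcup T}$ reduces, via Theorem~\ref{basis-thm}(b) and the observation that $\Delta_{S,T}(m_\Lambda)$ vanishes unless $\Lambda=\Lambda|_S\sqcup\Lambda|_T$, to the identity
\[ \zeta_{X\sqcup Y}(x) = (\zeta_X\otimes\zeta_Y)\circ\Delta_{S,T}(x)\qquad\text{for }X\in\Pi[S],\ Y\in\Pi[T],\]
and this is a direct consequence of coassociativity of $\Delta$.

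The main obstacle is the product compatibility $\Psi_{S\sqcup T}\circ\nabla_{S,T} = \nabla_{S,T}\circ(\Psi_S\otimes\Psi_T)$. Using Theorem~\ref{basis-thm}(a), the right hand side expands to $\sum_\Lambda \zeta_{\Lambda|_S}(x)\,\zeta_{\Lambda|_T}(y)\cdot m_\Lambda$, so the desired identity reduces to showing
\[ \zeta_\Lambda\bigl(\nabla_{S,T}(x\otimes y)\bigr) = \zeta_{\Lambda|_S}(x)\cdot\zeta_{\Lambda|_T}(y) \]
for every $\Lambda=\{B_1,\dots,B_k\}\in\Pi[S\sqcup T]$. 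I would prove this by iterating the Hopf compatibility diagram \eqref{hopf-diagram}: setting $A_i=S\cap B_i$ and $C_i=T\cap B_i$, the composition $\Delta_{B_1,\dots,B_k}\circ\nabla_{S,T}$ factors through $\Delta_{A_1,\dots,A_k}(x)\otimes\Delta_{C_1,\dots,C_k}(y)$ followed, up to a shuffle of tensor factors, by the block-wise products $\nabla_{A_i,C_i}\colon\h[A_i]\otimes\h[C_i]\to\h[B_i]$. Applying $\zeta_{B_1}\otimes\cdots\otimes\zeta_{B_k}$ and invoking the monoid morphism property $\zeta_{B_i}\circ\nabla_{A_i,C_i} = \text{mult}\circ(\zeta_{A_i}\otimes\zeta_{C_i})$ turns each local product into ordinary multiplication in $\kk$, which after regrouping yields exactly $\zeta_{\Lambda|_S}(x)\cdot\zeta_{\Lambda|_T}(y)$. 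Handling the cases in which some $A_i$ or $C_i$ is empty is a minor bookkeeping matter, relying only on $\h[\varnothing]\cong\kk$ with $\zeta_\varnothing=\id$.
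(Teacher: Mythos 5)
Your proposal is correct. The existence half follows the paper's own route almost exactly: you reduce the comonoid compatibility to the identity $\zeta_{X\sqcup Y}=(\zeta_X\otimes\zeta_Y)\circ\Delta_{S,T}$ (coassociativity) and the monoid compatibility to $\zeta_\Lambda\circ\nabla_{S,T}=\zeta_{\Lambda|_S}\otimes\zeta_{\Lambda|_T}$ (iterated Hopf compatibility plus the fact that $\zeta$ is a monoid morphism), which are precisely the two identities the paper isolates before declaring the rest ``straightforward to check''; you supply slightly more detail on how the Hopf-compatibility square is iterated blockwise. Where you genuinely diverge is uniqueness. The paper argues by minimal counterexample: it takes a set $I$ of least cardinality on which two morphisms differ, applies $\Delta_{S,T}$ for proper decompositions to kill the coefficients of all multi-block $m_\Lambda$ in the difference, and then uses $\zeta_\Pi$ and Lemma \ref{char-fact} to kill the one-block coefficient. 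You instead extract every coefficient of an arbitrary morphism directly, via $c_\Lambda=(\zeta_\Pi^{\otimes k})\circ\Delta_{B_1,\dots,B_k}(\Psi_I(x))$ together with the observation that Lemma \ref{char-fact} forces $\Lambda'|_{B_i}=\{B_i\}$ for every block, so only $\Lambda'=\Lambda$ survives. This avoids the induction on $|I|$ entirely and has the added virtue of \emph{deriving} the stated formula rather than merely verifying it; the paper's version is shorter to write but less constructive. Both arguments use the same two ingredients (the $m_\bullet$ coproduct formula from Theorem \ref{basis-thm}(b) and Lemma \ref{char-fact}), so the difference is one of organization rather than of substance, but your uniqueness computation is a legitimate and arguably cleaner alternative.
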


\begin{remark}
One can show that the Hopf monoid of set compositions $\mbox{\boldmath$\Sigma$}^*$ defined in \cite[Section 12.4]{species} is likewise a terminal object in the  category of all combinatorial Hopf monoids, by essentially the same argument as on the one which follows. 
\end{remark}

\begin{proof}

Abusing notation slightly,  we  write $\nabla$ and $\Delta$ for the product and coproduct of both $\h$ and $\bfPi$.
First note that the formula for $\zeta_\Lambda$ does not depend on the ordering of the blocks of $\Lambda$, and so is well-defined, since $\h$ is cocommutative. 
Since $\Delta_{B_1,\dots,B_k}$ is the identity map when $k= 1$, we have $\zeta_\Lambda(x) = \zeta(x)$ when $\Lambda$ has less than two blocks, and so it follows from the formula for $\zeta_\Pi(m_\Lambda)$ in Lemma \ref{char-fact} that $\zeta = \zeta_\Pi\circ \Psi$.

To see that $\Psi$ commutes with products and coproducts, suppose $S$ and $T$ are disjoint finite sets with $I = S\sqcup T$. 
If  $\Lambda' \in \Pi[S]$ and $\Lambda'' \in \Pi[T]$
then by definition
$\zeta_{\Lambda'\sqcup \Lambda''} = (\zeta_{\Lambda'} \otimes \zeta_{\Lambda''}) \circ \Delta_{S,T}.$
Similarly, since the product and coproduct of $\h$  are Hopf compatible and since $\zeta$ is a morphism of monoids, it follows if  $\Lambda \in \Pi[I]$ that 
$\zeta_\Lambda \circ \nabla_{S,T} =\zeta_{\Lambda|_S} \otimes \zeta_{\Lambda|_T}$ with $\Lambda|_S$  defined as in Example \ref{Pi-ex}.
From these identities and the (co)multiplication formulas for the basis $m_\bullet$ of  $\bfPi$, it is straightforward to check  that 
$\Psi$ is a morphism of combinatorial Hopf monoids. 

To show the uniqueness of $\Psi$, let $\Phi : (\h,\zeta) \to (\bfPi,\zeta_\Pi)$ be another morphism. Suppose $I$ is a set of minimal cardinality for which there exists $x \in \h[I]$ such that $\Psi_I(x) \neq \Phi_I(x)$. 
Write 
$(\Psi_I-\Phi_I)(x) = \sum_{\Lambda \in \Pi[I]} c_\Lambda  m_\Lambda$ where $c_\Lambda \in \kk.$
For any disjoint decomposition $I = S\sqcup T$ we have
\[ (\Psi_S\otimes \Psi_T -\Phi_S\otimes \Phi_T)\circ \Delta_{S,T}(x) = \Delta_{S,T}\circ (\Psi_I-\Phi_I)(x) = \sum_{(\Lambda',\Lambda'') \in \Pi[S] \times \Pi[T]} c_{\Lambda'\sqcup \Lambda''} \cdot m_{\Lambda'}\otimes m_{\Lambda''}.\]
If $S$ and $T$ are both proper subsets then $\Psi_S = \Phi_S$ and $\Psi_T = \Phi_T$ by hypothesis, in which case this expression is zero. Thus $c_\Lambda = 0$ 
whenever $\Lambda$ is a union of two partitions of nonempty sets, i.e., whenever $\Lambda$ has more than one block.
Since $(\Psi_I- \Phi_I)(x) \neq 0$, it follows that $c_\Lambda \neq 0$ for the unique partition $\Lambda \in \Pi[I]$ with at most one block. This however leads to the contradiction
\[ 0 \neq c_{\Lambda} = \zeta_\Pi \circ (\Psi_I- \Phi_I)(x) = (\zeta-\zeta)(x) = 0\]
by Lemma \ref{char-fact}.
We conclude that $\Psi = \Phi$ is the unique morphism $(\h,\zeta) \to (\bfPi,\zeta_\Pi)$.
\end{proof}

As an application, we may use the preceding theorem to construct 
 the \emph{Frobenius characteristic map} identifying symmetric functions with class functions on the symmetric group.
Towards this end,
we define  two graded vector spaces
\[\Fun(S_\bullet) = \bigoplus_{n\geq 0} \Fun(S_n)
\qquand
\Cl(S_\bullet) = \bigoplus_{n\geq 0} \Cl(S_n)\]
where $\Fun(S_n)$ is the vector space of maps $S_n \to \kk$, 
 and $\Cl(S_n)$ is the subspace of  maps $S_n \to \kk$ which are constant on the conjugacy classes of $S_n$.
When $n=0$ we identify $\Fun(S_0) = \Cl(S_0) = \kk$.

These   spaces become graded connected Hopf algebras in the following way. 
The unit and counit of both are the usual inclusions into and projections onto the subspaces of degree 0 elements.
View $S_m\times S_n$ as the subgroup of  $S_{m+n}$ 
generated by the permutations of $[m]$ and the permutations of $[m+1,m+n]$.  
The products of $\Fun(S_\bullet)$ and $\Cl(S_\bullet)$
are then given by the respective formulas 
\[ \nabla(f\otimes g) = f\times g
\qquand
\nabla(f\otimes g) = \Ind_{S_m\times S_n}^{S_{m+n}}(f \times g)
\]
where on the left $(f,g) \in \Fun(S_m)\times  \Fun(S_n)$ and on the right $(f,g) \in \Cl(S_m)\times \Cl(S_n)$.
Here, we interpret $f\times g $ as a map $S_{m+n} \to \kk$ by setting its value to be identically zero on the complement of the subgroup $S_m \times S_n$.
%
\begin{remark}
Recall that if $G$ is a finite group with a subgroup $H$ and $f$ is a function $H \to \kk$, then the induced function $\Ind_H^G(f) : G \to \kk$ is given by the formula
\[ \Ind_H^G(f)(x) = \frac{1}{|H|} \sum_{\substack{g \in G \\ gxg^{-1} \in H}} f(gxg^{-1})\qquad\text{for }x \in G.\]
This function is always constant on the conjugacy classes of $G$.
\end{remark}
The restriction of a function $f : S_{m+n} \to \kk$ to the subgroup $S_m \times S_n$ defines an element of $\Fun(S_m)\otimes \Fun(S_n)$ which we denote by $\Res^{S_{m+n}}_{S_m\times S_n}(f)$. 
The coproduct on both $\Fun(S_\bullet)$ and $\Cl(S_\bullet)$ is then given by the formula
\[ \Delta(f) = \sum_{k=0}^n \Res_{S_k\times S_{n-k}}^{S_n}(f)\]
where $f \in \Fun(S_n)$ or $f \in \Cl(S_n)$ as appropriate.
This is a well-defined coproduct on $\Cl(S_\bullet)$ since 
 if $f$ is constant on the conjugacy classes of $S_{m+n}$ then its restriction is constant on the conjugacy classes of $S_m \times S_{n}$. 

One can check directly that these definitions make $\Fun(S_\bullet)$ into a graded connected Hopf algebra. The proof of the same statement for $\Cl(S_\bullet)$ 
is more involved; see \cite[Corollary 4.9]{ReinerNotes}.
On the other hand,
the isomorphisms 
described in the following theorem
may be taken as a proof that these graded vector spaces are in fact Hopf algebras.
Similarly, consider the  morphism of graded vector spaces 
\be\label{cl-surj} \Fun(S_\bullet) \to \Cl(S_\bullet)\ee
which sends a function $f : S_n \to \kk$ to the class function $\wt f : S_n \to \kk$ defined by  
\[ \wt f(x) = \sum_{\sigma \in S_n} f(\sigma x \sigma^{-1})\qquad\text{for }x \in S_n.\]
It is an exercise to check that this graded linear map is morphism of Hopf algebras; alternatively, this fact can be deduced from our next theorem.

\begin{notation}
Given an integer partition $\lambda = (\lambda_1,\lambda_2,\dots,\lambda_\ell)$ of $n$, recall the definitions of $\lambda! $ and $\lambda^!$ from 
before Theorem \ref{sym-thm}, and let $z_\lambda = \lambda^!\cdot \lambda_1\cdot \lambda_2 \cdots \lambda_\ell$. Then $z_\lambda$ is the size of the centralizer in $S_n$ of any permutation with cycle type $\lambda$.
Also, write $S_\lambda$ for the Young subgroup 
\[S_{\lambda_1}\times S_{\lambda_2}\times \dots \times S_{\lambda_\ell} \subset S_n.\]
If $\sigma \in S_n$ then we let $1_\sigma : S_n \to \kk$ denote the characteristic function of the set $\{\sigma\}$.
Likewise, if $\lambda$ is an integer partition of $n$ then we let $1_\lambda : S_n \to \kk$ denote the characteristic function of the set of permutations in $S_n$ with cycle type $\lambda$.
Finally, we write $\One$ and $\sgn$ respectively for the trivial homomorphism $S_n \to \{1\}$ and the sign homomorphism $S_n \to \{\pm 1\}$.
\end{notation}

Recall the definition of the connected Hopf monoid $\bfSigma$ from Example \ref{S-ex}.
This Hopf monoid is commutative, cocommutative, and strongly linearized in its natural basis $\fk S$.
The order $\{ \prec\}$ on $\fk S$ defined from the product and coproduct of $\bfSigma$ by Theorem \ref{order-thm} has the following description, which we recall from \cite[Example 4.1.6]{Me}.
If $I$ is a finite set and $\lambda,\lambda' \in \fk S[I]$ are permutations 
then  $\lambda \preceq \lambda'$ if and only the cycles of $\lambda'$ are each shuffles of some number of cycles of $\lambda$. Here, a cycle $c$ is a \emph{shuffle} of two cycles $a$ and $b$ if we can write $c=(c_1,\dots, c_n)$ and there are indices $1\leq i_1 < \dots < i_k \leq n$ such that $a = (c_{i_1},\dots, c_{i_k})$ and $b$ is the cycle given by deleting $c_{i_1},\dots,c_{i_k}$ from $c$. For example, there are six  cycles which are shuffles of $(1,2)$ and $(3,4)$:
\[ (1,2,3,4),\quad (1,2,4,3),\quad (1,3,2,4),\quad (1,4,2,3),\quad (1,3,4,2),\quad (1,4,3,2).\]
In turn, a cycle $c$ is a shuffle of $n$ cycles $a_1,\dots,a_n$ if $c$ is a shuffle of $a$ and $b$ where  $a=a_1$ and $b$ is some shuffle of $a_2,\dots,a_n$. (When $n=1$ this occurs precisely when $c=a_1$.)

\begin{remark}
Using  the formulas in Theorem \ref{basis-thm}, one checks that $\cK(\bfSigma)$ is isomorphic to the graded connected Hopf algebra  $\mathfrak{S}\textbf{Sym}$ described in \cite[Proposition 3.3]{HNT}, via the map identifying the $\{p_\sigma\}$ basis of $\cK(\bfSigma)$ with the basis of $\mathfrak{S}\textbf{Sym}$ denoted $\{\textbf{S}^\sigma\}$ in \cite{HNT}.
By \cite[Corollary 3.5]{HNT}, both Hopf algebras are isomorphic  to the \emph{Grossman-Larson Hopf algebra of heap-ordered trees} studied in \cite{GL}.
\end{remark}

The distinguished bases 
of $\bfSigma$ (defined via Section \ref{bases-sect} with respect to $\fk S$) are now related to the Hopf algebras $\Fun(S_\bullet)$ and $\Cl(S_\bullet)$ by the following theorem.

\begin{theorem}\label{cl-thm}
There are Hopf algebra isomorphisms $f$ and $\overline f$
such that the diagram
\[ 
\begin{diagram}
\cK(\bfSigma) && \rTo^{f} && \Fun(S_\bullet) \\ 
\dTo &&& & \dTo \\
\overline\cK(\bfSigma) && \rTo^{\overline f} && \Cl(S_\bullet)
\end{diagram}
\]
commutes (where the vertical arrows are \eqref{surj} and \eqref{cl-surj}), 
and such that  
\[
f(p_\sigma) =1_\sigma
\qquad\quad
\overline f(p_\sigma) = z_\lambda \cdot 1_\lambda
\qquad\quad
\overline f(e_\sigma) = \lambda! \cdot \Ind_{S_\lambda}^{S_n} (\sgn)
\qquad\quad
\overline f(h_\sigma) =  \lambda! \cdot \Ind_{S_\lambda}^{S_n} (\One)
\] 
whenever $\sigma \in S_n$ is a permutation with cycle type $\lambda$.
\end{theorem}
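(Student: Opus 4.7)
The plan is to define $f : \cK(\bfSigma) \to \Fun(S_\bullet)$ on the distinguished basis by $f(p_\sigma) = 1_\sigma$ and to verify that $f$ is a Hopf algebra morphism, so that $\overline f$ arises by descent to coinvariants. Since $\{p_\sigma\}$ is a basis of $\cK(\bfSigma)[n]$ by Theorem \ref{basis-thm} and $\{1_\sigma\}$ is a basis of $\Fun(S_n)$, the map $f$ is automatically a graded vector space isomorphism.

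To verify Hopf algebra compatibility, one checks products and coproducts on the $p$-basis. For products, Theorem \ref{basis-thm}(a) gives $\nabla(p_\alpha \otimes p_\beta) = p_{\alpha \cdot \beta}$ in $\bfSigma$; lifting via \eqref{k-prod} yields $p_\alpha \cdot p_\beta = p_{\alpha \sqcup \canon(\beta)}$ in $\cK(\bfSigma)$, which agrees with $1_\alpha \times 1_\beta = 1_{\alpha \sqcup \canon(\beta)}$ in $\Fun(S_\bullet)$ under the embedding $S_m \times S_n \hookrightarrow S_{m+n}$. For coproducts, Theorem \ref{basis-thm}(b) says $\Delta_{S,T}(p_\sigma) = p_{\sigma|_S} \otimes p_{\sigma|_T}$ exactly when $S$ is a union of cycles of $\sigma$, and zero otherwise; summing $\st_{S,T} \circ \Delta_{S,T}$ over all ordered decompositions $[n] = S \sqcup T$ as in \eqref{k-coproduct} gives the Fock coproduct on $\cK(\bfSigma)$, which I match term-by-term with $\Delta_{\Fun}(1_\sigma) = \sum_{k=0}^n \Res_{S_k \times S_{n-k}}^{S_n}(1_\sigma)$ by observing that both vanish outside the decompositions respecting the cycle structure of $\sigma$.

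Next I induce $\overline f$: the $S_n$-action on $\bfSigma[n]$ is by conjugation, so $\overline\cK(\bfSigma)$ has basis $\{p_\lambda\}_{\lambda \vdash n}$ indexed by cycle types, where $p_\lambda$ denotes the image of $p_\sigma$ for any $\sigma$ of cycle type $\lambda$. The surjection $\Fun(S_\bullet) \to \Cl(S_\bullet)$ sends $1_\sigma \mapsto \wt{1_\sigma}$, and a direct computation gives $\wt{1_\sigma}(x) = |\{g \in S_n : gxg^{-1} = \sigma\}| = z_\lambda$ when $x$ is conjugate to $\sigma$ and zero otherwise; hence $\wt{1_\sigma} = z_\lambda \cdot 1_\lambda$. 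This forces $\overline f(p_\lambda) = z_\lambda \cdot 1_\lambda$ and makes the diagram commute.

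Finally, to derive the formulas for $\overline f(h_\sigma)$ and $\overline f(e_\sigma)$, I expand in the $p$-basis using Section \ref{bases-sect}. Writing $h_\sigma = \sum_{\sigma' \preceq \sigma} |\Mobius(\sigma')| p_{\sigma'}$ and using Lemma \ref{basis-lem2} to express $|\Mobius(\sigma')|$ as $\prod_B (|B|-1)!$ over cycles $B$ of $\sigma'$ yields an explicit sum in the $\{1_\mu\}$ basis of $\Cl(S_\bullet)$. The main obstacle is proving the resulting combinatorial identity, namely that this sum equals $\lambda! \cdot \Ind_{S_\lambda}^{S_n}(\One)$; I would verify it by computing the coefficient of each $1_\mu$ on both sides using Frobenius's formula $\Ind_H^G(f)(x) = |H|^{-1} \sum_{g : gxg^{-1} \in H} f(gxg^{-1})$ and the explicit description, recalled just before the theorem, of the interval $\{\sigma' : \sigma' \preceq \sigma\}$ via cycle-shuffle refinements. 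The formula $\overline f(e_\sigma) = \lambda! \Ind_{S_\lambda}^{S_n}(\sgn)$ then follows from the involution $p_\sigma \mapsto \sgn(\sigma) p_\sigma$ on $\overline\cK(\bfSigma)$ (the corollary following Theorem \ref{basis-thm}), which corresponds under $\overline f$ to multiplication by the sign character on $\Cl(S_\bullet)$, combined with the identity $\sgn \cdot \Ind_{S_\lambda}^{S_n}(\One) = \Ind_{S_\lambda}^{S_n}(\sgn)$ and the fact that $\sgn|_{S_\lambda}$ is the sign character of $S_\lambda$.
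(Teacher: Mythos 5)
Your construction of $f$ and the identification $\overline f(p_\sigma)=z_\lambda\cdot 1_\lambda$ follow the paper's route, but two steps that the paper treats as essential are missing. First, you never verify that $\overline f$ is a morphism of Hopf algebras: you only define it so that the square commutes. To conclude from the square that $\overline f$ respects products and coproducts you would need to know in advance that the map \eqref{cl-surj} is a Hopf algebra morphism, but the paper explicitly declines to assume this (it is stated just before the theorem as an exercise whose solution can ``alternatively be deduced from our next theorem''), so leaning on it here is circular. The paper instead proves directly that $\overline f$ is an algebra map via the computation $\Ind_{S_m\times S_n}^{S_{m+n}}(1_\alpha\otimes 1_\beta)=\tfrac{z_\lambda}{z_\alpha z_\beta}\cdot 1_\lambda$, and then gets the coalgebra property for free from the observation that the elements $p_\sigma$ with $\sigma$ a single $n$-cycle are primitive, have primitive images, and generate $\overline\cK(\bfSigma)$ as an algebra. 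You should supply an argument of this kind (or actually do the exercise for \eqref{cl-surj}).

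Second, and more seriously, the formulas for $\overline f(h_\sigma)$ and $\overline f(e_\sigma)$ are exactly the hard part of the theorem, and your proposal reduces them to a combinatorial identity that you name as ``the main obstacle'' but do not prove. Matching the coefficient of each $1_\mu$ in $\sum_{\sigma'\preceq\sigma}|\Mobius(\sigma')|\,\overline f(p_{\sigma'})$ against $\lambda!\cdot\Ind_{S_\lambda}^{S_n}(\One)$ for arbitrary $\lambda$ amounts to rederiving the induced-character formula by enumerating shuffle-refinements by cycle type, which is genuine work that a proof cannot simply gesture at. The paper's device avoids this entirely: since $\nabla(h_{\sigma'}\otimes h_{\sigma''})=h_{\sigma'\times\sigma''}$ by Theorem \ref{basis-thm} and $\overline f$ is (once proven) an algebra homomorphism, transitivity of induction reduces everything to the case where $\sigma$ is a single $n$-cycle, i.e.\ $S_\lambda=S_n$; there the identity collapses to $\sum_{\mu\vdash n}\tfrac{n!}{z_\mu}\cdot z_\mu\cdot 1_\mu=n!\cdot\One$ using only that the number of permutations refining $\sigma$ with shape a given set partition $\Lambda$ contributes $\|\Lambda\|=\prod_B(|B|-1)!$. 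You should incorporate this reduction. Your idea of deducing the $e_\sigma$ formula from the $h_\sigma$ formula via the sign involution is sound and is a legitimate small shortcut over the paper's parallel computation, provided you note that under $\overline f$ the involution $p_\sigma\mapsto\sgn(\sigma)p_\sigma$ becomes pointwise multiplication by $\sgn$ on $\Cl(S_n)$ and invoke $\sgn\cdot\Ind_{S_\lambda}^{S_n}(\One)=\Ind_{S_\lambda}^{S_n}(\sgn)$.
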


\begin{proof}
There is certainly an isomorphism of graded vector spaces $f : \cK(\bfSigma) \to \Fun(S_\bullet)$ with $f(p_\sigma) = 1_\sigma$ for all permutations $\sigma \in S_n$ for $n\geq 0$. In view of the formulas for the product and coproduct of $\bfSigma$ on $p_\bullet$ in Theorem \ref{basis-thm}, it is clear that this map is an isomorphism of Hopf algebras.
There is likewise an isomorphism of graded vector spaces $\overline f : \overline\cK(\bfSigma) \to \Cl(S_\bullet)$ with $\overline f(p_\sigma) = z_\lambda\cdot 1_\lambda$
whenever $\sigma \in S_n$ is a permutation with cycle type $\lambda$.
One checks that if $\sigma \in S_n$ is a permutation with cycle type $\lambda$ 
then the image of $1_\sigma$ under the map \eqref{cl-surj} is 
precisely $z_\lambda \cdot 1_\lambda$.
Hence the maps $f$ and $\overline f$ make the diagram in the theorem commute.

It remains to check that $\overline f$  is a morphism of Hopf algebras, and that the images of $e_\sigma$ and $h_\sigma$ under this map are as described.
To this end, let $\sigma' \in S_m$ and $\sigma'' \in S_n$ be permutations with cycle types $\alpha$ and $\beta$.
Define $\sigma = \sigma' \times \sigma'' \in S_m\times S_n \subset S_{m+n}$
and let $\lambda$ denote its cycle type.
It is straightforward to compute
\[ \Ind_{S_m\times S_n}^{S_{m+n}}( 1_\alpha \otimes 1_\beta) = \frac{1}{m!n!} \cdot \frac{m!}{z_\alpha} \cdot \frac{n!}{z_\beta}\cdot z_\lambda \cdot 1_\lambda = \frac{z_\lambda}{z_\alpha z_\beta} \cdot 1_\lambda.\]
Since the product of $p_{\sigma'}$ and $p_{\sigma''}$ in $\overline \cK(\bfSigma)$ is $p_{\sigma}$, it follows from this identity that $\overline f$ is an algebra homomorphism.
Now suppose $\sigma \in S_n$ is a permutation whose cycle type is the partition $\lambda = (n)$ with one part. Both $p_\sigma$ and $\overline f(p_\sigma)$ are then primitive in $\overline \cK(\bfSigma)$ and $\Cl(S_\bullet)$ respectively, so since  basis elements $p_\sigma$ of this form generate $\overline \cK(\bfSigma)$ as an algebra, it follows that $\overline f$ is also a coalgebra homomorphism. Thus $\overline f$ is an isomorphism of Hopf algebras as desired.

Observe from Theorem \ref{basis-thm} that $\nabla(e_{\sigma'} \otimes e_{\sigma''}) = e_{\sigma}$
and 
$
\nabla(h_{\sigma'} \otimes h_{\sigma''}) = h_{\sigma}$
whenever $\sigma'$ and $\sigma''$ are permutations and  $\sigma = \sigma'\times \sigma''$, where both products are computed in $\overline \cK(\bfSigma)$.
Since $\overline f$ is an algebra homomorphism, to prove the given formulas for $\overline f (e_\sigma)$ and $\overline f (h_\sigma)$ it thus suffices to prove these formulas in the case when $\sigma$ is a permutation with only one cycle.
Assume $\sigma \in S_n$ has this form,
and for each set partition $\Lambda \in \Pi[n]$ let $\sigma_\Lambda$  be a permutation with $\sigma_\Lambda \preceq \sigma$ (in the order $\prec$ on $\fk S$ defined by Theorem \ref{order-thm}) whose cycles are the blocks of $\Lambda$. By Theorem \ref{posetisom-thm} there is exactly one permutation $\sigma_\Lambda$ with these properties and by Lemma \ref{basis-lem2} it holds that 
\be\label{||} e_\sigma = \sum_{\Lambda \in \Pi[n]} \sgn(\sigma_\Lambda) \cdot \|\Lambda\| \cdot p_{\sigma_\Lambda}, \qquad\text{where }\|\Lambda\| = \prod_{B \in \Lambda} (|B|-1)!.\ee
Since $\|\Lambda\|$ is the number permutations whose cycles are the blocks of $\Lambda$,
it follow that the sum $\sum_\Lambda \|\Lambda\|$ over all set partitions with type $\lambda$ is exactly $\frac{n!}{z_\lambda}$, the size of the conjugacy class of permutations with cycle type $\lambda$. Thus when $\sigma$ is a permutation with one cycle
\[ \overline f(e_\sigma) = \sum_{\Lambda \in \Pi[n]} \sgn(\sigma_\Lambda) \cdot \|\Lambda\| \cdot \overline f(p_{\sigma_\Lambda})
=
\sum_{\lambda \vdash n}   \sgn(\lambda) \cdot \tfrac{n!}{z_\lambda} \cdot z_\lambda \cdot 1_\lambda
= n! \cdot \sgn
\]
and it follows by a similar argument that $\overline f(h_\sigma) = n! \cdot \One$, which is what we needed to show.
\end{proof}

We now  combine in the following corollary Theorems \ref{sym-thm}, \ref{terminal-thm}, and \ref{cl-thm}
to construct an isomorphism between $ \Cl(S_\bullet) $ and $ \Sym$; this isomorphism is precisely the well-known Frobenius characteristic map.

\begin{corollary}\label{frob-cor}
There is an isomorphism $ \Cl(S_\bullet) \xrightarrow{\sim} \Sym$ of graded connected Hopf algebras
 such that  if $\lambda$ is a partition of a nonnegative integer $n$ then
\[   z_\lambda\cdot 1_\lambda \mapsto p_\lambda \qquand  \Ind_{S_\lambda}^{S_n}(\One)\mapsto h_\lambda \qquand  \Ind_{S_\lambda}^{S_n}(\sgn) \mapsto e_\lambda.\]
\end{corollary}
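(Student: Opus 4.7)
The plan is to realize the Frobenius characteristic as the composition
\[\Cl(S_\bullet) \xrightarrow{\sim} \overline\cK(\bfSigma) \xrightarrow{\overline\cK(\Psi)} \overline\cK(\bfPi) \xrightarrow{\sim} \Sym,\]
where the outer isomorphisms come from Theorems \ref{cl-thm} and \ref{sym-thm} respectively, and the middle arrow is induced by the natural morphism of connected Hopf monoids $\Psi \colon \bfSigma \to \bfPi$ sending a permutation $\sigma \in \fk S[S]$ to its cycle partition $\sh(\sigma) \in \Pi[S]$. First I would verify that $\Psi$ is a Hopf monoid morphism: it commutes with products because the cycles of $\sigma_1 \sqcup \sigma_2$ are precisely the cycles of $\sigma_1$ together with the cycles of $\sigma_2$, and it commutes with coproducts because, by Example \ref{S-ex}, the cycles of the induced permutation $\sigma|_S$ are exactly the nonempty intersections $C \cap S$ for cycles $C$ of $\sigma$.

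To fit this into the framework of Theorem \ref{terminal-thm}, I would set $\zeta_\bfSigma := \zeta_\Pi \circ \Psi$, making $(\bfSigma, \zeta_\bfSigma)$ into a cocommutative combinatorial Hopf monoid for which $\Psi$ is, by uniqueness, the morphism of combinatorial Hopf monoids into $(\bfPi, \zeta_\Pi)$ guaranteed by that theorem. Applying $\overline\cK$ yields a morphism of graded connected Hopf algebras, and composing with the isomorphisms from Theorems \ref{cl-thm} and \ref{sym-thm} produces the desired graded Hopf algebra map $F \colon \Cl(S_\bullet) \to \Sym$. To see $F$ is an isomorphism, I would observe that $\Psi$ descends to a bijection $\fk S[n]_{S_n} \to \Pi[n]_{S_n}$ sending a conjugacy class of cycle type $\lambda$ to the orbit of set partitions of type $\lambda$, since both sets of orbits are parametrized by integer partitions of $n$.

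For the three formulas, the crucial input is Lemma \ref{basis-lem2}, which shows that $\Mobius(\sigma')$ depends only on $\sh(\sigma')$; combined with the order-preserving bijection $\{\sigma' : \sigma' \preceq \sigma\} \xrightarrow{\sim} \{\Lambda : \Lambda \leq \sh(\sigma)\}$ established in the proof of Lemma \ref{basis-lem2}, this yields $\Psi(p_\sigma) = p_{\sh(\sigma)}$, $\Psi(h_\sigma) = h_{\sh(\sigma)}$, and $\Psi(e_\sigma) = e_{\sh(\sigma)}$ for any $\sigma \in \fk S[n]$. Tracing $z_\lambda \cdot 1_\lambda$, $\lambda! \cdot \Ind_{S_\lambda}^{S_n}(\One)$, and $\lambda! \cdot \Ind_{S_\lambda}^{S_n}(\sgn)$ through the three stages of $F$ (using Theorem \ref{cl-thm} for the first stage, $\overline\cK(\Psi)$ on $p/h/e$-bases for the second, and Theorem \ref{sym-thm} for the third) produces $p_\lambda$, $\lambda! \cdot h_\lambda$, and $\lambda! \cdot e_\lambda$, after which dividing by $\lambda!$ (nonzero since $\character(\kk)=0$) gives the claimed formulas. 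The main obstacle is really the combinatorial bookkeeping: verifying that the $\lambda!$ factors attached to $e_\bullet$ and $h_\bullet$ in Theorems \ref{cl-thm} and \ref{sym-thm} cancel exactly as needed, and ensuring the Möbius-invariance argument is clean across all three bases.
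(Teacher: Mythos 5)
Your proposal is correct and follows essentially the same route as the paper: both realize the map as the composition $\Cl(S_\bullet)\xrightarrow{\sim}\overline\cK(\bfSigma)\xrightarrow{\overline\cK(\Psi)}\overline\cK(\bfPi)\xrightarrow{\sim}\Sym$ with $\Psi$ the (terminal) morphism sending $p_\sigma\mapsto p_{\sh(\sigma)}$, prove $\overline\cK(\Psi)$ is an isomorphism by comparing $S_n$-coinvariants, and let the $\lambda!$ factors from Theorems \ref{sym-thm} and \ref{cl-thm} cancel. The only cosmetic difference is that you verify the morphism property of $\sigma\mapsto\sh(\sigma)$ directly and track the $e_\bullet$, $h_\bullet$ bases through $\Psi$ via M\"obius-invariance, whereas the paper obtains $\Psi$ from Theorem \ref{terminal-thm} and works only with the $p_\bullet$ basis.
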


\begin{remark}
The irreducible characters of $S_n$ for $n\geq 0$ provide another basis for $\Cl(S_\bullet)$, and the image of this basis under the Frobenius characteristic map is  the basis of Schur functions $\{s_\lambda\}$ in $\Sym$; see  \cite[Theorem 4.10]{ReinerNotes}.
We note that the paper \cite{ER} describes the preimage of the $\{m_\lambda\}$ basis in $\Cl(S_\bullet)$ under the Frobenius map; this is considerably more complicated than for our other bases.
\end{remark}

\begin{proof}
Define  $\zeta_{\fk S} : \bfSigma \to \bfE$ as the morphism with  $\zeta_{\fk S}(p_\sigma) = 1$ for all permutations $\sigma$.
Then $(\bfSigma,\zeta_{\fk S})$ is a combinatorial Hopf monoid, and 
 the unique morphism
\[\Psi : (\bfSigma, \zeta_{\fk S}) \to (\bfPi,\zeta_\Pi)\] given by Theorem \ref{terminal-thm}
is such that
if
$\sigma \in \fk S[I]$ is a permutation then $\Psi_I(p_\sigma) = \sum_{\Gamma} m_\Gamma$ where 
the sum is over all set partitions $\Gamma  \in \Pi[I]$ such that each cycle of $\sigma$ is a subset of a block of $\Gamma$.
This is precisely the element $p_\Lambda \in \bfPi[I]$ where $\Lambda=\sh(\sigma)$ is the set partition whose blocks are the cycles of $\sigma$.
The morphism $\Psi$ is thus   surjective, so  
\[\overline\cK(\Psi) : \overline \cK(\bfSigma) \to \overline \cK(\bfPi)\]
 is a surjective morphism of graded connected Hopf algebras. As the quotient spaces of $S_n$-coinvariants in $\bfSigma[n]$ and $\bfPi[n]$ always have the same dimension, 
$\overline\cK(\Psi)$
is  in fact an isomorphism.
The statement of the corollary now follows  by conjugating $\overline\cK(\Psi)$ with the isomorphisms 
$\overline\cK(\bfPi) \cong \Sym$ and $\overline\cK(\bfSigma) \cong \Cl(S_\bullet)$ in 
 Theorems \ref{sym-thm} and \ref{cl-thm}. 
\end{proof}

These results lend themselves to a very natural construction of the lifting map $ \Sym \to \NCSym$ considered in \cite[Section 4]{Sagan1}.
There is an obvious lifting map $\Cl(S_\bullet) \to \Fun(S_\bullet)$, namely, the map given by the natural inclusion of the space of class functions in the space of all functions on $S_n$ for each $n\geq 0$.
Write $\rho : \NCSym \to \Sym$ for the projection \eqref{ncsym2sym} and 
define $\tilde \rho$ as the graded linear map making the  diagram 
\[ 
\begin{diagram}
 \Fun(S_\bullet) && \rTo^{\sim} && \cK(\bfSigma) && \rTo^{\cK(\Psi)} && \cK(\bfPi) && \rTo^{\sim}&& \NCSym \\ 
\uTo &&& & &&&&&&&& \uTo_{\tilde\rho}  \\
  \Cl(S_\bullet) && \lTo^{\sim} && \overline\cK(\bfSigma) && \lTo^{\overline\cK(\Psi)^{-1}} && \overline \cK(\bfPi) && \lTo^{\sim} && \Sym
\end{diagram}
\]
commute, where the horizontal isomorphisms are the (inverses of the) ones in Theorems \ref{sym-thm} and \ref{cl-thm}, and $\Psi$ is the unique morphism of   combinatorial Hopf monoids $(\bfSigma, \zeta_{\fk S}) \to (\bfPi,\zeta_\Pi)$.

\begin{proposition}
The map $\tilde \rho  $ thus defined is such that 
$\rho\circ \tilde \rho$ acts as multiplication by $n!$ on the subspace of degree $n$ elements in $\Sym$.
Moreover, if $\lambda $ is an integer partition
  of $n\geq 0$,
then the images of $m_\lambda$, $p_\lambda$, $e_\lambda$,  $h_\lambda$ under $\tilde \rho$ are 
\be\label{expr}  \lambda! \cdot \sum_\Lambda m_\Lambda
\qquand
 \lambda!\cdot  \lambda^!\cdot \sum_\Lambda p_\Lambda
 \qquand
 \lambda^! \cdot \sum_\Lambda e_\Lambda
  \qquand
 \lambda^! \cdot \sum_\Lambda h_\Lambda
   \ee
respectively, where each sum is over all set partitions $\Lambda$ of $[n]$ of type $\lambda$.
\end{proposition}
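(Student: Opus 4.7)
The plan is to show that $\tilde\rho$ simplifies dramatically, bypassing the $\bfSigma$ portion of its defining diagram. Define the \emph{averaging lift} $L \colon \overline\cK(\bfSigma) \to \cK(\bfSigma)$ by $L(\overline x) = \sum_{\tau \in S_n} \tau \cdot x$ for any preimage $x$ of $\overline x$, and analogously $L' \colon \overline\cK(\bfPi) \to \cK(\bfPi)$. I would first check that the composition $\overline\cK(\bfSigma) \cong \Cl(S_\bullet) \hookrightarrow \Fun(S_\bullet) \cong \cK(\bfSigma)$ appearing in the definition of $\tilde\rho$ is exactly $L$: using the formulas $\overline p_\sigma \mapsto z_\lambda \cdot 1_\lambda$ and $p_\sigma \mapsto 1_\sigma$ from Theorem \ref{cl-thm}, together with $1_\lambda = \sum_{\sigma' \sim \sigma} 1_{\sigma'}$ and the fact that the stabilizer of $\sigma$ in $S_n$ has order $z_\lambda$, this composition sends $\overline p_\sigma$ to $z_\lambda \sum_{\sigma' \sim \sigma} p_{\sigma'} = \sum_{\tau \in S_n} p_{\tau \sigma \tau^{-1}}$. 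Since $\Psi$ is a morphism of vector species, each component $\Psi_{[n]}$ is $S_n$-equivariant; consequently $\cK(\Psi) \circ L = L' \circ \overline\cK(\Psi)$.

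Substituting this equivariance identity into the definition of $\tilde\rho$ collapses the $\bfSigma$ portion of the diagram, leaving $\tilde\rho$ equal to the composition $\Sym \xrightarrow{\sim} \overline\cK(\bfPi) \xrightarrow{L'} \cK(\bfPi) \xrightarrow{\sim} \NCSym$ of $L'$ with the two isomorphisms from Theorem \ref{sym-thm}. The four formulas in \eqref{expr} then follow by routine computation. For each $x \in \{m,p,e,h\}$ and any $\Lambda$ of type $\lambda$, Theorem \ref{sym-thm} states that $x_\lambda \in \Sym$ corresponds to $\overline x_\Lambda / c_x \in \overline\cK(\bfPi)$ with $c_m = \lambda^!$, $c_p = 1$, and $c_e = c_h = \lambda!$; the averaging lift sends $\overline x_\Lambda$ to $\sum_\tau x_{\tau(\Lambda)} = |\mathrm{Stab}_{S_n}(\Lambda)| \cdot \sum_{\Lambda' : \text{type }\lambda} x_{\Lambda'} = \lambda!\lambda^! \sum_{\Lambda'} x_{\Lambda'}$; and the final isomorphism acts as the identity on each $x_{\Lambda'}$. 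Thus $\tilde\rho(x_\lambda) = (\lambda!\lambda^!/c_x) \sum_\Lambda x_\Lambda$, which yields the claimed expressions for all four bases upon substituting the respective value of $c_x$.

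The assertion $\rho \circ \tilde\rho = n! \cdot \mathrm{id}$ in degree $n$ then follows from the identity $\rho(x_\Lambda) = c_x x_\lambda$ (a consequence of the commutative diagram of Theorem \ref{sym-thm}), together with the fact that the number of set partitions of $[n]$ of type $\lambda$ equals $n!/(\lambda!\lambda^!)$: these combine to give $\rho(\tilde\rho(x_\lambda)) = (\lambda!\lambda^!/c_x) \cdot (n!/(\lambda!\lambda^!)) \cdot c_x x_\lambda = n! x_\lambda$. The non-routine step is the $S_n$-equivariance identity that collapses the $\bfSigma$ portion of the diagram; without it one would be forced into a more delicate direct analysis of how $\Psi$ acts on the $m$-basis, which (as small-case computation reveals) does not behave as transparently as on the $p$, $e$, or $h$ bases.
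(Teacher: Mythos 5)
Your proof is correct, and it takes a genuinely different route from the paper's. The paper computes $\tilde\rho(p_\lambda)$ directly by chasing the full defining diagram (using the explicit formula $\Psi_I(p_\sigma)=p_{\sh(\sigma)}$ from the proof of Corollary \ref{frob-cor} and the identity $\sum_\Lambda \|\Lambda\| = n!/z_\lambda$), then argues indirectly for the other three bases: it observes that the degree-$n$ part of the image of $\tilde\rho$ equals the image under $f$ of the $S_n$-invariants of $\bfPi[n]$, checks that each candidate expression in \eqref{expr} lies in that subspace and is sent by $\rho$ to $n!$ times the corresponding classical basis element, deduces $\rho\circ\tilde\rho=n!\cdot\id$ from the already-settled $p$-case, and back-solves for the $m$, $e$, $h$ formulas. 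Your argument instead isolates the structural fact doing all the work: the composition $\overline\cK(\bfSigma)\cong\Cl(S_\bullet)\hookrightarrow\Fun(S_\bullet)\cong\cK(\bfSigma)$ is the $S_n$-averaging lift $L$, and $S_n$-equivariance of $\Psi_{[n]}$ gives $\cK(\Psi)\circ L = L'\circ\overline\cK(\Psi)$, collapsing $\tilde\rho$ to $f\circ L'\circ\overline f^{-1}$ with the $\bfSigma$ leg cancelling entirely. From there all four formulas follow by one uniform orbit--stabilizer computation ($|\mathrm{Stab}_{S_n}(\Lambda)|=\lambda!\,\lambda^!$ for $\Lambda$ of type $\lambda$), and I have checked that your verification of $L=f^{-1}\circ\iota\circ\overline f$ on the $\overline{p_\sigma}$ and your bookkeeping of the constants $c_x$ against Theorem \ref{sym-thm} are accurate. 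What your approach buys is uniformity (all four bases treated identically, no bootstrapping through $\rho\circ\tilde\rho$) and a conceptual explanation of why $\bfSigma$ drops out; what the paper's buys is economy, since it reuses the Frobenius correspondence and needs only the single $p$-basis computation already in hand.
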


\begin{proof}
Using  Corollary \ref{frob-cor} and the diagram which defines $\tilde \rho$, it is straightforward
to compute
\[ \tilde \rho(p_\lambda) =  \sum_\Lambda z_\lambda \cdot  \|\Lambda\|\cdot  p_\Lambda =  \lambda!\cdot  \lambda^!\cdot \sum_\Lambda p_\Lambda\] where   
$\|\Lambda\|$ is defined as in \eqref{||} and the
sums are over all set partitions $\Lambda$ of $[n]$ of type $\lambda$.
This formula shows that 
the subspace of degree $n$ elements  in  the image of $\tilde \rho$ is 
the same as the image of the subspace of $S_n$-invariant elements in $\bfPi[n]$ under the isomorphism $f: \cK(\bfPi) \to \NCSym$ in Theorem \ref{sym-thm}.
All of the expressions in \eqref{expr} clearly belong to the latter image,
and it follows from Theorem \ref{sym-thm}  that applying $\rho$ to these expressions gives
\[n!  \cdot m_\lambda
\qquand
n!  \cdot p_\lambda
\qquand
n!  \cdot e_\lambda
\qquand
n!  \cdot h_\lambda\]
respectively (since there are $\frac{n!}{\lambda^!\lambda!}$   partitions $\Lambda$ of $[n]$ with  type $\lambda$; see \cite[Eq. (4)]{Sagan1}).
Since we already computed $\tilde \rho(p_\lambda)$, this shows that $\rho\circ \tilde \rho$ acts as multiplication by $n!$ on the subspace of degree $n$ elements in $\Sym$.
This fact, in turn, confirms the given formulas for $\tilde\rho$ on $m_\lambda$, $e_\lambda$, and $h_\lambda$.
\end{proof}

The proposition shows that $\tilde \rho$ is  a rescaled version of the lifting map in \cite[Section 4]{Sagan1}, which is given by the formula
 $m_\lambda \mapsto \frac{\lambda!}{n!} \cdot \sum_\Lambda m_\Lambda$ for partitions $\lambda$ of $n$.
If $\{ s_\lambda\}$ denotes the basis of Schur functions in $\Sym$, then the images \[S_\lambda \omdef = \tilde \rho(s_\lambda)\] under our lifting map are precisely the ``Schur functions'' of $\NCSym$ defined in \cite[Section 6]{Sagan1}; see \cite[Theorem 6.2(iv)]{Sagan1}.
Remarkably, Rosas and Sagan also gave an explicit combinatorial definition of the elements $S_\lambda$ in terms of MacMahon symmetric functions.
While the elements $\{S_\lambda\}$ are linearly independent, they do not form a basis, and it
remains an open problem to describe a meaningful basis for $\NCSym$ which projects via $\rho$ to the basis of Schur functions in $\Sym$.

\end{document}